\documentclass[10pt]{elsarticle}
\usepackage{amssymb,amsmath,amsfonts,mathrsfs,amsthm,mathtools}
\usepackage{graphicx}
\usepackage{bm}
\usepackage[margin=1in,papersize={8.5in,11in}]{geometry}
\usepackage[utf8]{inputenc}
\usepackage[english]{babel}
\usepackage{xcolor}

\newtheorem{prop}{Proposition}
\makeatletter

\def\H{\mathcal{H}}
\def\D{\mathcal{D}}
\def\L{\mathcal{L}} 
\def\M{\mathcal{M}} 
\def\P{\mathcal{P}}
\def\Q{\mathcal{Q}}

\def\I{\mathcal{I}}

\journal{Arxiv}

\begin{document}

\begin{frontmatter}
\title{Generalized Langevin equations for systems with local interactions}
\author[ucsc]{Yuanran Zhu}
\author[ucsc]{Daniele Venturi\corref{correspondingAuthor}}
\address[ucsc]{Department of Applied Mathematics, University of California Santa Cruz\\ Santa Cruz (CA) 95064}
\cortext[correspondingAuthor]{Corresponding author}
\ead{venturi@ucsc.edu}
 
\begin{abstract}
We present a new method to approximate 
the Mori-Zwanzig (MZ) memory integral 
in generalized Langevin equations (GLEs) describing 
the evolution of smooth observables in 
high-dimensional nonlinear systems with 
local interactions. 
Building upon the Faber operator series 
we recently developed for the orthogonal dynamics 
propagator, and an exact combinatorial algorithm 
that allows us to compute memory kernels from first 
principles, we demonstrate that the proposed 
method is effective in computing auto-correlation 
functions, intermediate scattering functions and other 
important statistical properties of the observable.
{  We also develop a new stochastic process 
representation of the MZ fluctuation term for systems 
in statistical equilibrium}. Numerical 
applications are presented for the Fermi-Pasta-Ulam model, and 
for random wave propagation in homogeneous media.
\end{abstract}
\end{frontmatter}

\section{Introduction}

The Mori-Zwanzig (MZ) formulation is a technique 
developed in statistical mechanics to formally 
integrate out phase variables in nonlinear 
dynamical systems by means of a projection operator. 
One of the main features of such formulation is that 
it allows us to systematically derive formally exact generalized 
Lagevin equations (GLEs) for quantities of interest 
(macroscopic observables), based on microscopic 
equations of motion. Such GLEs can be 
found in a variety of applications, including particle 
dynamics  \cite{snook2006langevin,Yoshimoto2013,
Li2015,Li2017,lei2016data}, partial differential 
equations (PDEs) \cite{Chorin,Stinis1,chertock2008modified,
ChoPRS2014,VenturiBook}, fluid dynamics 
\cite{parish2017dynamic,parish2017non},  and solid-state 
physics \cite{Woo2017,Li2010,mendl2015current}.
As an example, consider the Brownian motion 
of a colloidal particle subject to collision interactions 
with a large number of fluid particles. By using the MZ 
formulation it is possible to derive a low-dimensional 
system of equations characterizing the dynamics 
(position and momentum) of the colloidal particle 
alone \cite{van1986brownian,lei2016data}.

Computing the solution to the MZ equation 
is a challenging task. One of the main difficulties is 
the approximation of the memory integral 
(convolution term), and the fluctuation term (noise term). 
These terms encode the interaction between the 
so-called orthogonal dynamics and the 
dynamics of the quantity of interest. The orthogonal dynamics is 
essentially a high-dimensional nonlinear flow 
that satisfies a hard-to-solve integro-differential equation. 
Such flow has, in general, the same order of magnitude and 
dynamical properties as the quantity of interest, i.e., 
there is no general scale separation between the so-called 
resolved and the unresolved variables \cite{Chorin1,Stinis}. 
As a consequence, approximating the MZ memory integral and the 
fluctuation term  in these cases is often a daunting task, 
because of the strong coupling between the orthogonal 
dynamics and the macroscopic observables.  
Over the years, many techniques have been 
proposed to address this problem. These techniques 
can be grouped in two categories: i) data-driven 
methods;  ii) methods based on first-principles. 
Data-driven methods aim at recovering the MZ memory 
integral/fluctuation term based on data, usually in the form 
of sample trajectories of the full system. 
Typical examples are the NARMAX 
technique developed by Lu {\em et al.} \cite{lu2017data}, 
the rational function approximation recently 
proposed by Lei {\em et al.} \cite{lei2016data} 
(see also \cite{chu2017mori}), and the conditional 
expectation technique developed by Brennan 
and Venturi \cite{Brennan2018}.
On the other hand, methods based on first principles 
aim at approximating the MZ memory integral 
and fluctuation term based on the structure of 
the nonlinear system (microscopic equations of motion), 
without using any simulation data. 
The first effective method developed within this class is 
the continued fraction expansion of Mori \cite{mori1965continued},  
which can be conveniently formulated in terms of 
recurrence relations \cite{lee1982solutions,florencio1985exact}.
Other methods based on first-principles 
include perturbation methods 
\cite{watts1977perturbation,venturi2014convolutionless},  
mode coupling techniques, \cite{reichman2005mode,gotze1999recent}, 
optimal prediction methods \cite{Chorin,Chorin1,Stinis,chertock2008modified}, and 
various series expansion
\cite{stinis2015renormalized,parish2017non,parish2017dynamic,
zhu2018estimation}. 
First-principle calculation methods can effectively 
capture non-Markovian memory 
effects, e.g., in coarse-grained particle simulations 
\cite{Yoshimoto2013,hijon2010mori}. 
However, they are often quite involved 
and they do not generalize well to systems with 
no scale separation \cite{givon2004extracting}.  
At the same time, data-driven methods 
can yield accurate results, but they often require a 
large number of sample trajectories to faithfully 
capture memory effects \cite{Brennan2018,chu2017mori,lei2016data,Li2015,Li2017} 

In this paper, we present a new method to compute the 
MZ memory integral and the flucuation term 
from first principles in nonlinear systems with 
local polynomial interactions. 
To this end, we build upon the Faber operator series expansion 
we recently developed in \cite{zhu2018faber}, and a new 
combinatorial algorithm that allows us to compute 
the MZ memory kernel by using only 
the structure of the microscopic equations of motion.  
We also develop a new data-driven stochastic process representation 
method based on the MZ memory kernel and Karhunen-Lo\`eve 
(KL) series expansions, which allows us to build simple 
models of the MZ fluctuation term in systems with invariant 
measures, e.g., Hamiltonian systems or more general systems 
\cite{bouchet2010invariant,flandoli1994dissipativity}.

This paper is organized as follows. 
In Section \ref{sec:review} we briefly review the MZ 
formulation for nonlinear dynamical systems evolving 
from random initial states. In Section \ref{sec:MGLE} 
we specialize such formulation by introducing Mori's 
projection operators. In Section \ref{sec:MZseries} 
and Section \ref{sec:recursive} we develop series 
expansion of the MZ memory kernel based on the Faber 
operator series we recently proposed in 
\cite{zhu2018faber}. Section \ref{sec:calculation_of_gamma}
is devoted to the description of an exact combinatorial algorithm to 
compute the recurrence coefficients of the MZ 
memory kernel expansion. In Section \ref{sec:Model}, we 
develop a new { stochastic process representation 
method to compute the MZ fluctuation term 
for systems in statistical equilibrium}. In Section
\ref{sec:application} we demonstrate the accuracy 
of the MZ memory calculation and the reduced-order 
stochastic modeling technique in applications to 
nonlinear random wave propagation 
described by Hamiltonian partial differential equations. 
The main findings of the paper are summarized 
in Section \ref{sec:conclusion}. 
We also include a brief Appendix where prove convergence 
of KL expansions in representing auto-correlation functions 
of polynomial observables.

\section{The Mori-Zwanzig formulation}
\label{sec:review}
Consider the following nonlinear dynamical system 
evolving on a smooth manifold $\mathcal{M}\subseteq \mathbb{R}^N$
\begin{equation}
 \label{eqn:nonautonODE}
 \frac{d{\bm x}}{dt} = \bm {F}(\bm x),\qquad 
{\bm x}(0) = \bm x_0,
\end{equation}
where $\bm x_0\in \M$ is a random initial state with probability density 
function $\rho_0(\bm x)$.
The dynamics of any vector-valued phase space function 
\begin{align}
\bm u\colon \mathcal{M} &\to \mathbb{R}^M \nonumber\\
\bm x & \mapsto \bm u(\bm x)
\label{observable}
 \end{align}
can be expressed in terms of a semi-group of linear 
operators acting on $\bm u(\bm x_0)$, i.e., 
\begin{equation}
\bm u(\bm x(t,\bm x_0))=e^{t\L(\bm x_0)} \bm u(\bm x_0),\qquad 
\L(\bm x_0) = \sum_{k=1}^N F_k(\bm x_0)\frac{\partial }{\partial x_{0k}}.
\label{Koopman}
\end{equation}
In this equation, $\bm x(t,\bm x_0)$ represents the 
flow \cite{Wiggins,Hirsch} generated by the 
system \eqref{eqn:nonautonODE}, while 
$e^{t\L}$ is the composition (Koopman) operator
of the system \cite{Koopman1931,dominy2017duality}.
We are interested in deriving the exact evolution equation 
for the phase space function $\bm u(t)=\bm u(\bm x(t,\bm x_0))$. 
To this end, we employ the Mori-Zwanzig formulation 
\cite{zwanzig1973nonlinear,Chorin,zhu2018estimation}. 
The first step is to introduce an orthogonal projection 
operator $\P$, and the complementary projection 
$\Q=\I-\P$, where $\I$ is the identity operator.
The mathematical properties of such projections 
are discussed in detail 
in \cite{zhu2018estimation,dominy2017duality}.
By differentiating the well-known Dyson's identity 
\begin{align}
e^{t\L}=e^{t\Q\L}+
\int_0^t e^{s\L}\P\L e^{(t-s)\Q\L}ds
\label{Dyson}
\end{align}
with respect to time, we obtain the following evolution 
equation for the Koopman operator $e^{t\L}$
\begin{align}
\frac{d}{dt} e^{t\L} = e^{t\L}\P\L + 
e^{t\Q\L}\Q\L+ \int_0^t e^{s\L}\P\L e^{(t-s)\Q\L}\Q\L \,ds.
\label{MZKoop}
\end{align}
Applying this equation to any phase space 
function $\bm u(0)=\bm u(\bm x_0)$ yields the
Mori-Zwanzig (MZ) equation 
\begin{align}
\frac{\partial}{\partial t}e^{t\mathcal{L}}\bm u(0)
&=e^{t\mathcal{L}}\mathcal{PL}\bm u(0)
+e^{t\mathcal{QL}}\mathcal{QL}\bm u(0)+\int_0^te^{s\mathcal{L}}\mathcal{PL}
e^{(t-s)\mathcal{QL}}\mathcal{QL}\bm u(0)ds.
\label{MZstatespace}
\end{align}
The three terms at the right hand side are  
called, respectively, streaming term, fluctuation 
(or noise) term, and memory term. It is often more 
convenient (and tractable) to compute the evolution 
of the observable $\bm u(t)$  within a closed 
linear space, e.g., the image of the projection operator 
$\P$. To this end, we apply such projection to both sides of equation 
\eqref{MZstatespace}. This yields the following exact evolution 
equation\footnote{Note that the projected fluctuation term 
$\P e^{t\mathcal{QL}}\mathcal{QL}\bm u(0)$ is identically 
zero since $\P\Q=0$.}
\begin{align}
\label{reduced order equation}
\frac{\partial}{\partial t}\mathcal{P} e^{t\mathcal{L}}\bm u(0)
=\mathcal{P}e^{t\mathcal{L}}\mathcal{PL}\bm u(0)
+\int_0^t\mathcal{P}e^{s\mathcal{L}}\mathcal{PL}
e^{(t-s)\mathcal{QL}}\mathcal{QL}\bm u(0)ds.
\end{align}
Depending on the choice of the projection operator, the MZ 
equation \eqref{reduced order equation} can yield evolution 
equations for different quantities. For example, if we use 
Chorin's projection 
\cite{Chorin,chorin2002optimal,zhu2018estimation,VenturiBook}, 
then \eqref{reduced order equation} is an evolution 
equation for the conditional mean of $\bm u(t)$. Similarly, 
if we use Mori's projection \cite{zhu2018faber,snook2006langevin}, 
then \eqref{reduced order equation} is an evolution 
equation for the temporal auto-correlation function 
of $\bm u(t)$.

\subsection{Mori's projection operator}
\label{sec:MGLE}
Suppose that the phase space function \eqref{observable} 
belongs to the weighted Hilbert space $H=L^2(\M,\rho)$, 
where $\rho$ is a positive weight function in $\M$. 
For instance, $\rho$ can be the 
probability density function of the random initial state 
$\bm x_0$ (i.e., $\rho_0$, see Eq. 
\eqref{eqn:nonautonODE}), 
or the equilibrium distribution of 
the system $\rho_{eq}$ (assuming it exists).  
Let
\begin{equation}
\langle f,g\rangle_{\rho}=\int_{\M} 
f(\bm x)g(\bm x)\rho(\bm x)d\bm x 
\qquad f,g\in H
\label{ip}
\end{equation}
be the inner product in $H$. 
For any closed linear 
subspace $V\subset H$ 
the Mori projection operator $\P$ is defined 
to be the orthogonal projection onto $V$, 
relative to the inner product \eqref{ip}. 
If $V$ is finite-dimensional with dimension $M$, 
then $\P$ can be effectively constructed if we are given 
$M$ linearly independent functions 
$u_i(0)=u_i(\bm x)\in V$ ($i=1,...,M$). 
Clearly, if $\{u_1(0),\dots, u_M(0)\}$ 
are linearly independent then 
$V=\text{span}\{u_1(0),\dots,u_M(0)\}$.
To construct Mori's projection, we first 
compute the positive-definite Gram matrix 
$G_{ij}=\langle u_i(0),u_j(0)\rangle_{\rho}$, i.e., 
\begin{equation}
G_{ij}= \int_{\M} u_i(\bm x)u_j(\bm x)\rho(\bm x)d\bm x.
\end{equation}
With $G_{ij}$ available, we define
\begin{align}
\label{Mori_P}
\P f=\sum_{i,j=1}^M G^{-1}_{ij}
\langle u_i(0),f\rangle_{\rho}u_j(0),
\qquad f\in H.
\end{align}
In classical statistical dynamics of Hamiltonian systems, 
a common choice for the density $\rho$ is the Boltzmann-Gibbs 
distribution
\begin{equation}
\rho_{eq}(\bm x)=\frac{1}{Z}e^{-\beta\H(\bm x)},
\label{rhoeq}
\end{equation}  
where $\H(\bm x)=\H(\bm q,\bm p)$ is the Hamiltonian 
of the system, $\bm x=(\bm q,\bm p)$ are generalized coordinates/momenta, and $Z$ is the partition function. For other 
systems, $\rho$ can be, e.g., the probability density 
function of the random initial state (see Eq. \eqref{eqn:nonautonODE}).  
Next, suppose that each observable 
$u_i(\bm x)$ ($i=1,\dots, M$) belongs to the linear 
space $\P H \cap\D(\L)$, where $\P H= V$ 
and $\D(\L)$ denotes the domain of the 
Liouville operator $\L$ defined in \eqref{Koopman}.
The MZ equation \eqref{MZstatespace}, with 
$\P$ defined in \eqref{Mori_P}, reduces to 
\begin{align}\label{gle_full}
\frac{d \bm{u}(t) }{dt}= \bm \Omega\bm{u}(t)+
\int_{0}^{t}\bm K(t-s)\bm{u}(s)ds+\bm f(t),
\end{align}
where\footnote{Note that the $i$th 
component of the  system \eqref{gle_full} 
can be explicitly written as 
\begin{equation}
\frac{du_i (t)}{dt} = \sum_{j=1}^M \Omega_{ij} u_j(t)+
\sum_{j=1}^M \int_{0}^{t}K_{ij}(t-s)u_j(s)ds+f_i(t).
\label{gle_full1}
\end{equation}
}
\begin{subequations}
\begin{align}
		G_{ij} & = \langle u_{i}(0), u_{j}(0)\rangle_{\rho}\quad 
		\text{(Gram matrix)},\label{gram}\\
		\Omega_{ij} &= \sum_{k=1}^MG^{-1}_{jk}
		\langle u_{k}(0), \L u_{i}(0)\rangle_{{\rho}}\quad 
		\text{(streaming matrix)},\label{streaming}\\
		K_{ij}(t) & =\sum_{k=1}^M G^{-1}_{jk}
		\langle u_{k}(0), \L e^{t\Q\L}\Q\L u_{i}(0)\rangle_{\rho}\quad 
		\text{(memory kernel)},\label{SFD}\\
		 \bm f(t)& =e^{t\Q\L}\Q\L \bm u(0) \quad 
		\text{(fluctuation term)}.\label{f}
	\end{align}
\end{subequations}
Equation \eqref{gle_full} is often referred to as 
generalized Langevin equation (GLE) in classical statistical 
physics and other disciplines \cite{snook2006langevin}. 
By applying Mori's projection to \eqref{gle_full} we obtain 
the following linear (and closed) evolution equation for the 
projected phase space function 
\begin{align}
\label{gle}
\frac{d}{dt}\P\bm{u}(t) = \bm \Omega\P \bm{u}(t) +
\int_{0}^{t}\bm K(t-s) \P \bm{u}(s)\,ds.
\end{align}
Acting with the inner product 
$\displaystyle \langle u_j(0),\cdot\rangle_{\rho}$ 
on both sides of equation \eqref{gle}, yields the following 
exact equation for the temporal auto-correlation matrix 
$C_{ij}(t) =\langle u_j(0), u_i(t)\rangle_{\rho}$ 
\begin{align}
\label{gle_C}
\frac{d}{dt}C_{ij}(t) = \sum_{k=1}^M \Omega_{ik}C_{kj}(t) + 
\sum_{k=1}^M\int_{0}^{t} K_{ik}(t-s) C_{kj}(s)ds.
\end{align}
Suppose that the system \eqref{eqn:nonautonODE} 
is Hamiltonian, and that the random initial state $\bm x_0$ 
is distributed according to the Boltzmann-Gibbs distribution 
\eqref{rhoeq}, i.e., $\rho_0=\rho_{eq}$. 
In these assumptions, the Liouville operator $\L$
is skew-adjoint relative to the inner 
product \eqref{ip}, i.e., we have
\begin{equation}
\langle f,\L g \rangle_{eq} = - \langle \L f,g \rangle_{eq} \qquad f,g\in L^2(\M,\rho_{eq})\cap\D(\L).
\end{equation}
This allows us to simplify the expression of the 
memory kernel \eqref{SFD} as 
\begin{align}
K_{ij}(t)=&-\sum_{k=1}^M G^{-1}_{jk}\langle\Q\L u_{k}(0),  e^{t\Q\L}\Q\L u_{i}(0)\rangle_{eq},\nonumber \\
=&-\sum_{k=1}^MG^{-1}_{jk}\langle f_k(0),f_i(t) \rangle_{eq},
\label{2nd_FDT}
\end{align}
where $f_k(t)$ is the $k$-th component of the fluctuation term 
\eqref{f}. The identity \eqref{2nd_FDT} is known as Kubo's second 
fluctuation-dissipation theorem \cite{kubo1966fluctuation}.
We emphasize there are several advantages in 
using Mori's projection \eqref{Mori_P}
over other projection operators, e.g., 
Chorin's projection \cite{chorin2000optimal}. 
{  For example, both MZ equations \eqref{gle_full} 
and \eqref{gle} are linear and closed}, 
which allows us perform rigorous convergence 
analysis  \cite{zhu2018faber,zhu2018estimation}. 
Secondly, the streaming matrix \eqref{streaming} and the memory 
kernel \eqref{SFD} are exactly the same for both the projected 
and the unprojected equations ,i.e.,  \eqref{gle_full} and 
\eqref{gle}). Thirdly, we have that the second-fluctuation 
dissipation theorem \eqref{2nd_FDT} holds true, 
which allows us to express the MZ memory kernel 
in a relatively simple form in terms of averages 
of random forces.

\subsection{Series expansion of the MZ memory kernel}
\label{sec:MZseries}
To compute the solution of the Mori-Zwanzig equation \eqref{gle} 
we need to evaluate the memory kernel \eqref{SFD}. This is often 
a daunting task due to the presence of terms such 
as $e^{t\Q\L} u_i(0)$, i.e., terms involving operator 
exponentials. 
A straightforward method to evaluate such terms is to expand
the orthogonal dynamics propagator $e^{t\Q\L}$ 
in a truncated operator polynomial series as\footnote{
The series expansion \eqref{generalS} needs to be 
handled with care if $\L$ is 
an unbounded operator, e.g., the 
generator of the Koopman semigroup \eqref{Koopman} (see \cite{kato2013perturbation}, p. 481). 
In this case, $e^{t\L}$  should be properly defined as
\begin{equation}
e^{t\L}=\lim_{q\rightarrow \infty}\left(1-\frac{t\L}{q}\right)^{-q}.
\nonumber
\end{equation}
In fact, $\left(1-t\L /q\right)^{-1}$ is the resolvent of $\L$ 
(modulus a constant factor), which can be rigorously 
defined for both bounded and unbounded operators.
}
\begin{equation}
e^{t\Q\L} \simeq \sum_{q=0}^n g_q(t)\Phi_q\left(\Q\L\right).
\label{generalS}
\end{equation}
The temporal modes $g_q(t)$ can be explicitly computed for any 
specific choice of the polynomial set $\{\Phi_0,\dots,\Phi_n\}$. 
For example, if $\Phi_q$ ($q=0,\dots,n$) 
are Faber polynomials \cite{zhu2018faber,Novati2003},
then $g_q(t)$ are products of exponential functions and Bessel functions 
of the first kind. Similarly, if $\Phi_q\left(\Q\L\right)=(\Q\L)^q$, 
then $g_q(t)=t^q/q!$ (see Table \ref{tab:1}).
A substitution of \eqref{generalS} into \eqref{SFD} 
yields the following series expansion of the MZ memory kernel
\begin{align}
K_{ij}(t) =& \sum_{k=1}^M G^{-1}_{jk}
		\langle u_{k}(0), \L e^{t\Q\L}\Q\L u_{i}(0)\rangle_{\rho},\nonumber\\
		\simeq &  \sum_{q=0}^n g_q(t) \sum_{k=1}^M G^{-1}_{jk}
		\langle u_{k}(0), \L \Phi_q(\Q\L)\Q\L u_{i}(0)\rangle_{\rho},\nonumber\\
		=&\sum_{q=0}^n g_q(t)M_{ijq},
		\label{K_formula}
\end{align}
where 
\begin{equation}
M_{ijq}=\sum_{k=1}^M G^{-1}_{jk}
		\langle u_{k}(0), \L \Phi_q(\Q\L)\Q\L u_{i}(0)\rangle_{\rho}.
		\label{Mdef}
\end{equation}
 {Note that the coefficients $M_{ijq}$ are 
 generalized operator cumulants in the sense of Hegerfeldt and Kubo  \cite{Hegerfeldt,kubo1962generalized,snook2006langevin}. Such 
Coefficients can be computed in a data-driven setting \cite{lei2016data,berkowitz1981memory},
or based on first-principles as we describe 
in Section \ref{sec:recursive}.}
We also emphasize that, in general, series expansions 
of the orthogonal dynamics propagator \eqref{generalS} 
with different polynomial bases $\Phi_q$ can yield different 
approximation errors in the MZ memory kernel 
\eqref{K_formula} for the same polynomial 
order $n$ (see \cite{zhu2018faber}).

{ Regarding convergence of \eqref{generalS}, 
our recent analysis \cite{zhu2018faber,zhu2018estimation} 
demonstrates that it can be rigorously established for 
linear systems and arbitrary integration times 
(see \S 5 in \cite{zhu2018faber}).
If the system is nonlinear, then the series expansion 
of the memory kernel \eqref{K_formula} is granted 
to converge only on a time interval that depends on the 
system and on the observable $\bm u(t)$. In particular, 
Corollary 3.4.3 in \cite{zhu2018estimation} establishes
short-time convergence of the MZ memory 
approximation \eqref{K_formula} for a broad 
class of nonlinear systems of the form \eqref{eqn:nonautonODE}. 
This implies that such memory approximation might 
be accurate only for a short integration time that depends 
on the system and the observable.}

\begin{table}
\begin{minipage}{4.0cm}
\small
\vspace{0.6cm}MZ memory kernel \\
$K_{ij}(t-s) \simeq \displaystyle \sum_{q=0}^{n} g_q(t-s) M_{ijq}$\\
\end{minipage}
\hspace{0.4cm}
\begin{minipage}{3.0cm}
\label{table}
\centering\small
\begin{tabular}{l l l}
Type & Temporal modes $g_q(t)$ & Coefficients $M_{ijq}$\vspace{0.1cm} \\
\hline\\
MZ-Dyson    \hspace{0.5cm} & $\displaystyle \frac{t^q}{q!}$   & $\displaystyle \sum_{k=1}^M G^{-1}_{jk}\langle u_{k}(0), \L (\Q\L)^q\Q\L u_{i}(0)\rangle_{\rho}$ 
\\
MZ-Faber         & $\displaystyle e^{tc_0}\frac{J_q(2t\sqrt{-c_1})}{(\sqrt{-c_1})^q}$   &  $\displaystyle \sum_{k=1}^M G^{-1}_{jk}\langle u_{k}(0), \L F_q(\Q\L)\Q\L u_{i}(0)\rangle_{\rho}$
\end{tabular}
\end{minipage}
\caption{Series expansions of the Mori-Zwanzig memory kernel \eqref{SFD}. In this Table, $J_q$ is the $q$th Bessel function of the first kind, $c_0$ and $c_1$ are real numbers defining the recurrence 
relation of the Faber polynomials $\{F_0,\dots F_n\}$, $M$ is the number of phase space functions $u_i(\bm x)$ (quantities of interest), and $G^{-1}_{ij}$ is the inverse of the Gram matrix \eqref{gram}. See \cite{zhu2018faber} for additional details.}
\label{tab:1}
\end{table}

\subsection{Calculation of the MZ memory kernel from first principles}
\label{sec:recursive}
In this Section we develop a new algorithm to calculate 
the MZ memory kernel \eqref{SFD} based on first-principles, i.e., 
based on the microscopic equations of motion of the 
system \eqref{eqn:nonautonODE}. The 
algorithm we propose is built upon the combinatorial 
approach originally proposed by Amati, Meyer and 
Schilling in \cite{amati2018memory}. To illustrate the main idea
in a simple way, hereafter we study the case 
where the observable $\bm u(t)$ is one-dimensional, i.e., 
we have only one phase space function 
$u(t)=u(\bm x(t,\bm x_0))$. In this setting, the series 
expansion \eqref{K_formula} reduces to
\begin{equation}
K(t)\simeq \sum_{q=0}^n g_q(t) M_q,\qquad 
\text{where}\qquad M_q =\frac{
		\langle u(0), \L \Phi_q(\Q\L)\Q\L u(0)\rangle_{\rho}}{\langle u(0), u(0)\rangle_{\rho}}.
	\label{Mq}
\end{equation}
Note that $K(t)$ depends only on the set of parameters 
$\{M_0,\dots,M_n\}$, since the temporal modes $g_q(t)$ are 
explicitly available given the polynomial set 
$\{\Phi_0,\dots,\Phi_n\}$ (see Table \ref{tab:1}). 
We aim at determining  $\{M_0,\dots,M_n\}$ from first principles. 
For one-dimensional phase space functions, Mori's 
projection \eqref{Mori_P} reduces to 
\begin{equation}
\P f= \frac{\langle f  , u(0)\rangle_{\rho}}{\langle u(0), u(0)\rangle_{\rho}} u(0).
\end{equation}
At this point, it is convenient to introduce the following 
notation
\begin{equation}
\mu_i = \frac{\langle \L(\Q\L)^{i-1} u(0),u(0)\rangle_{\rho}}{\langle u(0),u(0)\rangle_{\rho}}, \qquad 
\gamma_i=\frac{\langle \L^i u(0),u(0)\rangle_{\rho}}{\langle u(0),u(0)\rangle_{\rho}}.
\label{mugam}
\end{equation}
Each coefficient $\mu_i$ represents the rescaling of $u(0)$ 
under the action of the operator $\P\L(\Q\L)^{i-1} $, i.e. we have 
\begin{equation}
\mu_i u(0) = \P\L(\Q\L)^{i-1} u(0).
\label{mu1}
\end{equation}
Clearly, if we are given $\{\mu_1,\dots,\mu_{n+2}\}$ 
then we can easily compute $M_q$ in \eqref{Mq}, and 
therefore the memory kernel $K(t)$ for any given 
polynomial function $\Phi_q$. For example, if 
$\Phi_q(\Q\L)=(\Q\L)^q$ then $M_{q}=\mu_{q+2}$ 
($q=0,\dots,n$). On the other hand, 
if $\{\Phi_0,\dots,\Phi_n\}$ are Faber polynomials \cite{zhu2018faber}, then 
we can write each $\Phi_q$ as a linear combination of 
monomials $(\Q\L)^j$ $(j=0,\dots, q)$  
and therefore represent $M_q$ as a linear 
combination of $\{\mu_1,\dots,\mu_{q+2}\}$.
Computing $\mu_i$ using the definition \eqref{mugam} 
involves taking operator powers and averaging, 
which may be computationally expensive. 
An alternative effective algorithm relies 
on the following recursive formula 
\cite{chu2017mori,snook2006langevin,berne1970calculation}  
\begin{align}
\mu_1=\gamma_1,\qquad 
\mu_2=\gamma_2-\mu_1\gamma_1,\qquad\cdots,\qquad 
\mu_{n}&=\gamma_{n}-\sum_{j=1}^{n-1} \mu_{n-j} \gamma_{j}.
\label{iterative1}
\end{align} 
In practice, \eqref{iterative1} shifts the problem of 
computing $\{\mu_1,\dots,\mu_{n}\}$ to the problem 
of evaluating the coefficients $\{\gamma_1,\dots,\gamma_{n}\}$
defined in \eqref{mugam}. This will be discussed extensively 
in the subsequent Section \ref{sec:calculation_of_gamma}. 
If the Liouville operator $\L$ is skew-adjoint 
relative to the inner product \eqref{ip}, then all $\mu_{j}$ 
and $\gamma_{j}$ corresponding to odd indices
are identically zero. This allows us to simplify the 
recursion \eqref{iterative1} as
\begin{align}
\label{iterative} 
\mu_{2j}&=\gamma_{2j}-\sum_{k=1}^{j-1} \mu_{2j-2k} 
\gamma_{2k} \qquad j=1,2, \dots .
\end{align} 
As a consequence, the streaming term \eqref{streaming} in 
the MZ equation vanishes identically since $\Omega=\mu_1=\gamma_1=0$.
We recall that skew-adjoint Liouville operators arise 
naturally, e.g., in Hamiltonian dynamical systems at statistical 
equilibrium.

\subsection{Systems with polynomial nonlinearities}
\label{sec:calculation_of_gamma}
In this Section, we address the problem of calculating the coefficients 
$\{\gamma_1,\dots,\gamma_{n}\}$ defined in \eqref{mugam} and 
appearing in the recursion relation \eqref{iterative1}. With such coefficients 
available, we can compute $\{\mu_1,\dots,\mu_{n}\}$ and therefore
the MZ memory kernel \eqref{Mq}. The calculation we propose 
is based on first principles, meaning that we do not rely on any 
assumption or model to evaluate the averages $\gamma_i=\langle \L^i u(0),u(0)\rangle_{\rho}/\langle u(0),u(0)\rangle_{\rho}$.
Instead, we develop a combinatorial algorithm that allows 
us to track all terms in $\L^iu(0)$, hence representing $\gamma_i$ 
exactly as a superimposition of a finite, although possibly large, 
number of terms. The algorithm we develop is built 
upon the combinatorial algorithm recently 
proposed by Amati, Meyer and Schiling in \cite{amati2018memory}. 
To describe the algorithm, consider the nonlinear dynamical system 
 \eqref{eqn:nonautonODE} and assume that $\bm F(\bm x)$ is 
a multivariate polynomial in the phase variables $\bm x$. 
A simple example of such system is the Kraichnan-Orszag 
three-mode problem \cite{Orszag,Xiao1,Brennan2018}
\begin{equation}
\dot{x}_1=x_1x_3, \qquad 
\dot{x}_2=-x_2x_3, \qquad 
\dot{x}_3=x_2^2-x_1^2.
\label{eqn:odexm}
\end{equation}
Other examples are the semi-discrete 
form of the Navier-Stokes equations, or the 
semi-discrete form of the nonlinear wave equation 
discussed in Section \ref{sec:application}. 
The key observation to compute $\gamma_j$ for systems with polynomial 
nonlinearities is that the action of the operator power $\L^i$ 
on a polynomial observable $u(\bm x)$ yields a polynomial function. 
For instance, consider $u(\bm x)=x_1^3$, and the  
Liouville operator associated with the system \eqref{eqn:odexm} 
\begin{equation}
\L= 
 x_1x_3\frac{\partial }{\partial x_1}
-x_2x_3\frac{\partial }{\partial x_2}
+(x_2^2-x_1^2)\frac{\partial }{\partial x_3}.
\label{LiouvilleKO}
\end{equation}
We have   
\begin{align}
\L x_1^3     = & 3 x_1^{3}x_3,  \label{Lpow1} \\ 
\L^2 x_1^3 = & 9 x_1^3 x_3^2    +  3 x_1^{3}x_2^2 - 3 x_1^{5}, \label{Lpow2}\\
\L^3 x_1^3 = & 27 x_1^3 x_3^3  + 18 x_1^3x_2^2 x_3 - 18 x_1^5 x_3 +
            27 x_1^{3}x_2^2x_3 - 6x_1^{3}x^2_2x_3-15 x_1^{5}x_3.  \label{Lpow3}
\end{align}
Clearly,  the number of terms in $\L^i x_1^3$ can rapidly 
increase, if high-order powers of $\L$ are considered. For higher-dimensional 
systems with non-local interactions, i.e., for systems where each $F_i(\bm x)$ 
($i=1,\dots, N$) depends on all components of $\bm x$, this 
problem is  serious, and requires multi-core 
computer-based combinatorics to systematically 
track all terms in the expansion 
of $\L^i x_j^q$. Let us introduce the following notation
\begin{align}\label{Ln}
\L^n x_j^{q}=\sum_{\bm b_i\in B^{(n)}}a^{(n)}_{\bm b_i} 
x_{k_{1}}^{m_{k_{1}}^{(i)}}\cdots x_{k_{r}}^{m_{k_{r}}^{(i)}},
\end{align}
where $\{a_{\bm b_i}^{(n)}\}$ are polynomial coefficients, and 
$\{m^{(i)}_{k_j}\}$ are polynomial exponents. The 
set of indexes representing the relevant phase phase variables 
appearing in $\L^n x_j^{q}$, i.e., $\{k_1,\dots, k_r\}$, 
is collected in the index set $K(n,j)=\{k_1,\dots, k_r\}$, which 
depends on $n$ and $j$. For example, in 
\eqref{Lpow1}-\eqref{Lpow3} we have
\begin{equation}
K(1,1)=\{1,3\}, \qquad K(2,1)=\{1,2,3\}, \qquad K(3,1)=\{1,2,3\}.
\end{equation}
Of course, for low-dimensional dynamical systems, the simplest choice 
for the relevant variables would be the complete set of variables 
$\{x_1,\cdots,x_N\}$. However, for high-dimensional systems with 
local interactions this choice could lead to unnecessary 
computations. In fact, it can be shown that the 
variables appearing in the polynomial $\L^n x_j^q$ are  
usually a (possibly small) subset of the phase variables 
if the system has local interactions. 
The vector $\bm b_i=[m^{(i)}_{k_{1}},\cdots,m^{(i)}_{k_{r}}]$, 
collects the exponents of the $i$-th monomial 
appearing in the expansion \eqref{Ln}.  
Similarly, $a^{(n)}_{\bm b_i}$ 
is the coefficient multiplying $i$-th monomial in \eqref{Ln}.
For example, in \eqref{Lpow1} and \eqref{Lpow2} we have, respectively,
\begin{equation}
\bm b_1=[3, 1], \qquad a^{(1)}_{\bm b_1}=3,\nonumber
\end{equation} 
\begin{align} 
\bm b_1=[3,0,2],\quad 
\bm b_2=[3,2,0],\quad 
\bm b_3=[5,0,0], \quad
a^{(2)}_{\bm b_1}=9,\quad 
a^{(2)}_{\bm b_2}=3, \quad 
a^{(2)}_{\bm b_3}=-3.
\nonumber
\end{align} 
At this point, it is convenient to define the set of polynomial exponents 
$B^{(n)}=\{\bm b_1,\bm b_2,\cdots \}$, the set polynomial 
coefficients $A^{(n)}=\{a^{(2)}_{\bm b_1},a^{(2)}_{\bm b_2},\cdots\}$, 
and the combined index set 
\begin{align}
\I^{(n)}=\{A^{(n)},B^{(n)}\}.
\label{def1}
\end{align}  
Clearly, $\I^{(n)}$ identifies uniquely the 
polynomial \eqref{Ln}, i.e., there is a one-to-one correspondence between 
 $\I^{(n)}$ and $\L^n x_j^q$. For example, in the case 
of \eqref{Lpow1}-\eqref{Lpow3} we have
\begin{align}
\I^{(1)}=&\{\underbrace{\{3\}}_{A^{(1)}}, 
\underbrace{\{[3,0,1]\}}_{B^{(1)}}\},\label{i1}\\
\I^{(2)}=&\{\underbrace{\{9,3,-3\}}_{A^{(2)}},
\underbrace{\{[3,0,2],[3,2,1],[5,0,0]\}}_{B^{(2)}},\label{i2}\\
\I^{(3)}=&\{\underbrace{\{27,18,-18,27,-6,-15\}}_{A^{(3)}},
\underbrace{\{[3,0,3],[3,2,1],[5,0,1],[3,2,1],[3,2,1],[5,0,1]\}}_{B^{(3)}}
\}.\label{i3}
\end{align}
If we apply $\L$ to \eqref{Ln} we obtain 
\begin{align}
\L^{n+1} x_j^{q}=&\L \L^{n} x_j^{q},\nonumber\\
= &\L \sum_{\bm b_i\in B^{(n)}}a^{(n)}_{\bm b_i} 
x_{k_{1}}^{m_{k_{1}}^{(i)}}\cdots x_{k_{r}}^{m_{k_{r}}^{(i)}},\nonumber\\
=&\sum_{\bm b_i\in B^{(n+1)}}a^{(n+1)}_{\bm b_i} 
x_{k_{1}}^{m_{k_{1}}^{(i)}}\cdots x_{k_{r}}^{m_{k_{r}}^{(i)}}.
\label{Ln1}
\end{align}
Clearly, if we can compute 
the mapping $\I^{(n)}\xrightarrow{\L} \I^{(n+1)}$, 
induced by the action of the Liouville operator $\L$ to 
the polynomial \eqref{Ln} (represented by $\I^{(n)}$), then  
we can compute the {\em exact} series expansion 
of $\L^n x_j^q$ for arbitrary $n$.
With such expansion available, we can immediately determine 
the coefficients $\gamma_j$ in \eqref{mugam} by averaging over the probability density $\rho$ as
\begin{equation}
\gamma_n=\frac{\langle\L^{n} x_j^{q},x_j^{q}\rangle_{\rho}}{\langle x_j^{q},x_j^{q}\rangle_{\rho}}=
\sum_{\bm b_i\in B^{(n)}}a^{(n)}_{\bm b_i}
\frac{\langle x_{k_{1}}^{m_{k_{1}}^{(i)}}\cdots 
x_{k_{r}}^{m_{k_{r}}^{(i)}}x_j^q \rangle_{\rho}}{\langle x_j^{q},x_j^{q}\rangle_{\rho}}.
\label{gamma_n}
\end{equation}
If the operator $\L$ is skew-adjoint in $L^2(\M,\rho)$, i.e., 
if $\langle \L f,g\rangle_{\rho}=-\langle f, \L g\rangle_{\rho}$, 
then we have
\begin{align}
\gamma_{2n}=\frac{\langle\L^{2n} x_j^{q},x_j^{q}\rangle_{\rho}}{\langle x_j^{q},x_j^{q}\rangle_{\rho}}
=(-1)^n\sum_{\substack{\bm b_i,\bm b_j\in B^{(n)}}}
a^{(n)}_{\bm b_j}a^{(n)}_{\bm b_i}\frac{\langle
x_{k_{1}}^{m_{k_{1}}^{(i)} +m_{k_{1}}^{(j)}} \cdots 
x_{k_{r}}^{ m_{k_{r}}^{(i)} + m_{k_{r}}^{(j)}}\rangle_{\rho}}{\langle x_j^{q},x_j^{q}\rangle_{\rho}}.
\label{gamma_2n}
\end{align}
{  As pointed out by Maiocchi {\em et al.} 
in \cite{maiocchi2012series}, the value 
of the first few coefficients $\{\gamma_n\}$ in 
\eqref{gamma_n} or \eqref{gamma_2n} can be used to 
identify non-exponential relaxation patterns to equilibrium.}

\vspace{0.3cm}
\noindent
\paragraph{Remark} To {  enhance} numerical 
stability when computing the Faber expansion of the MZ
memory kernel we scale the Liouville operator 
$\L$ by a parameter $\delta <1$  
(see \cite{zhu2018faber,huisinga1999faber}), i.e., we 
compute the Faber operator polynomial series 
relative to $\delta\L$.  Correspondingly, the 
generalized Langevin equation \eqref{gle} is 
solved on a time scale $t/\delta$. In this setting, the 
coefficients \eqref{gamma_n} are also calculated 
relative to the rescaled Liouville operator $\delta \L$.

\vspace{0.3cm}
\noindent
\paragraph{Remark} 
Computing $\gamma_j$ for linear systems reduces 
to a classical numerical linear algebra problem, i.e., 
computing the Rayleigh quotient of a matrix power. 
To show this, consider the $N$-dimensional linear 
system $\dot{\bm x}=\bm{A}\bm x$, evolving from 
the random initial state $\bm x_0\sim \rho_0$ 
($\bm x_0$ column vector). 
Suppose we are interested in the first component of the system, 
i.e., set the observable as $u(t)=x_1(t,\bm x_0)$. 
Define the linear subspace 
$V=\text{span}\{x_{01},x_{02},\cdots,x_{0N}\}\subset L^2(\M,\rho_0)$. 
Clearly $u(t)\in V$ for all $t\geq 0$ \cite{zhu2018estimation,zhu2018faber}. 
This allows us to calculate $\gamma_j$ as  
\begin{align}
\gamma_j=\langle \left[\bm {A}^T\right]^j\bm x_0 \cdot \bm e_1\rangle_{\rho_0},
\qquad j=1,\dots, n
\label{linear_gamman}
\end{align}
where $\bm e_1=[1,0,\dots,0]^T$.

\subsection{Mapping the index set $\I^{(n)}$}
\label{sec:iterativealgorithm}
Here we describe the algorithm  that allows 
us to compute the polynomial $\L^{n+1} x_k^q$ given 
the polynomial $\L^n x_k^q$ and the Liouville operator 
$\L$, i.e., the mapping defined by equation \eqref{Ln1}. 
This is equivalent to develop a set of algebraic rules 
to transform the combined index set $\I^{(n)}$ defined in 
\eqref{def1} into $\I^{(n+1)}$, for arbitrary $n$. 
Once such rules are known, we can apply them recursively 
to compute the polynomial sequence
\begin{equation}
x_j^q \rightarrow \L x_j^q\rightarrow \L^2 x_j^q\rightarrow   \L^3 x_j^q \rightarrow \cdots \rightarrow  \L^n x_j^q
\nonumber
\end{equation}
to any desired order. In this way, we can determine 
$\gamma_n$ through  \eqref{gamma_n} (or \eqref{gamma_2n}),  
$\mu_n$ through \eqref{iterative1} (or \eqref{iterative}), and 
therefore the MZ memory kernel \eqref{Mq}. 
Before formulating the algorithm in full generality, 
it is useful to examine how it operates in a concrete example. 
To this end, consider again the Kraichnan-Orszag 
system \eqref{eqn:odexm}, and the transformation 
between the polynomials \eqref{Lpow2} and \eqref{Lpow3}
defined by the action of the Liouville operator \eqref{LiouvilleKO}. 
We are interested in formulating such transformation 
in terms of a set of algebraic operations mapping 
the index set $\I^{(2)}$ into $\I^{(3)}$ 
(Eqs. \eqref{i2}-\eqref{i3}). We begin by decomposing 
the three-dimensional Liouville operator \eqref{LiouvilleKO} as 
\begin{equation}
\L = \L_1+\L_2+\L_3,\qquad \text{where}\qquad 
\L_1=x_1x_3\frac{\partial }{\partial x_1},\qquad 
\L_2=-x_2x_3\frac{\partial }{\partial x_2},\qquad 
\L_3=(x_2^2-x_1^2)\frac{\partial }{\partial x_3}.
\end{equation}
The action of $\L_i$ on any monomial 
generates a polynomial with $S_i$ terms. In the present example, 
we have $S_1=S_2=1$ and $S_3=2$. 
Let us now consider the first monomial in \eqref{Lpow2}, i.e., 
$9x_1^3x_3^2$. Such monomial is represented by the first element 
of $A^{(2)}$ and $B^{(2)}$ in \eqref{i3}. The corresponding 
combined set is $\{9,[3,0,2]\}$. At this point, we apply the operators 
$\L_1$, $\L_2$ and $\L_3$ to the polynomial $\{9,[3,0,2]\}$. This yields 
\begin{align}
\underbrace{\{9,[3,0,2]\}}_{9x_1^3x_3^2} &
\xrightarrow{\L_1} \underbrace{\{27,[3,0,3]\}}_{27x_1^3x_3^3},\\
\underbrace{\{9,[3,0,2]\}}_{9x_1^3x_3^2} &
\xrightarrow{\L_2} \underbrace{\{0,[3,-1,2]\}}_{0},\\
\underbrace{\{9,[3,0,2]\}}_{9x_1^3x_3^2} &
\xrightarrow{\L_3} 
\underbrace{\{18,[3,2,1]\}\, \biguplus\, \{-18,[5,0,1]\}}_{18 x_1^3x_2^2 x_3 - 18 x_1^5 x_3} = 
\{\{18,-18\},\{[3,2,1],[5,0,1]\}\}.
\label{u3}
\end{align}
The transformation associated with $\L_3$ generates 
the sum of two monomials, namely $18 x_1^3x_2^2 x_3 - 18 x_1^5 x_3$, 
which we represent as a union between two index sets. 
Such union, here denoted as $\biguplus$, is an ordered union 
that  pushes to the left polynomial coefficients 
and to the right polynomial exponents. Proceeding in a 
similar manner for all other monomials in \eqref{Lpow2} and 
taking ordered unions of all sets, yields the desired 
mapping $\I^{(2)}\rightarrow \I^{(3)}$. 
Let us now examine the action of a more general Liouville operator 
\begin{equation}
\L_j = z x_1^{c_1}\cdots x_N^{c_N}\frac{\partial }{\partial x_j}
\end{equation}
on the monomial $a x_1^{m_1}\cdots x_N^{m_N}$ 
represented by the index set $\{a,[m_1,\dots , m_N]\}$. 
We have 
\begin{equation}
\{a,[m_1, \dots ,m_N]\} \xrightarrow{\L_j} 
\{z m_j a, [m_1+c_1, \dots, m_j+c_j-1,\dots, m_N+c_N]\}. 
\end{equation}
This defines two {linear} transformations: a scaling transformation 
in the space of coefficients,  and an addition in the space of exponents
\begin{equation}
a\rightarrow (z m_1) a, \qquad [m_1, \dots ,m_N]
\rightarrow [m_1, \dots ,m_N] + [c_1, \dots, c_j-1,\dots, c_N].
\label{linT}
\end{equation}
In a vector notation, upon definition of 
$\bm b=[m_1, \dots ,m_N]$, $\bm \theta_j=[c_1, \dots, c_j-1,\dots, c_N]$ and $\alpha_j=z m_j$, we can write \eqref{linT} in 
compact form as  
\begin{equation}
 a\rightarrow \alpha_j a,\qquad  \bm b \rightarrow \bm b+\bm\theta_j.
 \label{linearMAP}
\end{equation}
Let us know consider the general case where the Liouville 
operator is defined as 
\begin{equation}
\L(\bm x)= \sum_{k=1}^N \L_k(\bm x) \qquad 
\L_k(\bm x)=F_{k}(\bm x)\frac{\partial }{\partial x_k} 
\end{equation}
and $F_k(\bm x)$ is a polynomial involving $S_k$ 
monomials in either all variables $\{x_1,\dots,x_N\}$ or 
a subset of them. The action of $\L$ on each 
monomial in \eqref{Ln1} can be written as
\begin{equation}
\L x^{m^{(i)}_{k_1}}_{k_1}\dots x^{m^{(i)}_{k_r}}_{k_r} = 
 \sum_{q\in K(n,j)} \L_q
x^{m^{(i)}_{k_1}}_{k_1}\dots x^{m^{(i)}_{k_r}}_{k_r},
\label{mono}
\end{equation}
where $K(n,j)=\{k_1,\dots, k_r\}$ is the set of relevant variables at iteration 
$n$. The polynomial \eqref{mono} involves 
$S_{k_1}+\cdots + S_{k_r}$ terms, each one of which can 
be explicitly constructed by applying the linear transformation 
rules \eqref{linearMAP}. In summary, we 
have
\begin{align}\label{iter_formula}
\I^{(n+1)}=\biguplus_{q \in K(n,j)}\biguplus_{i=1}^{\# B^{(n)}}
\biguplus_{s=1}^{S_{q}}\{\alpha^q_s a_{\bm b_i}^{(n)},\bm b_i+\bm 
\theta^q_s\}, 
\end{align}
where $\# B^{(n)}$ denotes the number of elements 
in $B^{(n)}$. Note that both $\alpha_s^s$ and $\bm \theta_s^q$ 
depend on $q \in K(n,j)$ (index set of relevant variables).

\paragraph{Remark} The recursive algorithm summarized by 
formula \eqref{iter_formula} is a modified version of 
the algorithm originally proposed by Amati, Meyer and Schiling 
in \cite{amati2018memory}.  {The key 
idea is the same, i.e., to compute the expansion 
coefficients $\gamma_n$ in 
\eqref{gamma_n} using polynomial differentiation. 
However, there are a few differences between our algorithm 
and the algorithm proposed in \cite{amati2018memory}
which we emphasize hereafter.} In \cite{amati2018memory}, 
the index set $B^{(n)}$ is pre-computed using the so-called 
spreading operators. Essentially, for each $n$, the iterative 
scheme generates a new set of polynomial coefficients 
$A^{(n)}$, which is subsequently matched 
with the corresponding indexes in $B^{(n)}$. 
In our algorithm, the sets $B^{(n)}$ and  
$A^{(n)}$ are computed on-the-fly 
at each step of the recursion. By doing so, we 
avoid calculating the spreading operators. This, in turn, 
allows us to avoid using numerical tensors to 
store index sets, since in our formulation 
there is no matching procedure between the 
polynomial exponents and the polynomial 
coefficients. {  Another difference 
between the two algorithms is that we utilized a rescaled 
Liouville operator $\delta \L$ ($\delta\in \mathbb{R}$) 
to enhance numerical stability when computing the 
operator polynomials $\Phi_q(\Q\L)$ in \eqref{Mdef}.  
The algorithm in \cite{amati2018memory}, on the other 
hand, is based on a Taylor series expansion of the operator 
exponential $e^{t\L}$, with unscaled Liouville 
operator\footnote{  
In our recent work \cite{zhu2018faber} (\S 3.1) we proved that 
a Taylor series of the orthogonal dynamical propagation 
$e^{t\Q\L}$ yields an expansion of the MZ memory integral 
that resembles the classical Dyson series in scattering theory.}}.

\subsection{An example: the Fermi-Pasta-Ulam (FPU) model}
Consider a one-dimensional chain of 
$N$ anharmonic oscillators with Hamiltonian
\begin{align}\label{FPU_H}
\H(\bm p,\bm q)=\sum_{j=0}^{N-1}\frac{p_j^2}{2m}+\sum_{j=1}^{N-1}V(q_j-q_{j-1}).
\end{align}
In \eqref{FPU_H} $\{q_j, p_j\}$ are, respectively, the generalized 
coordinate and momentum of the $j$-th oscillator, while  
$V(q_i-q_{i-1})$  is the potential energy between two adjacent 
oscillators. Suppose that the oscillator chain is closed (periodic), 
i.e., that $q_0=q_{N}$ and $p_0=p_{N}$. 
Define the distance between two oscillators 
as   $r_j=q_j-q_{j-1}$. This allows us to write the Hamilton's 
equations of motion  as 
\begin{align*}
\begin{dcases}
\frac{dr_j}{dt}&=\frac{1}{m}(p_i-p_{i-1}),\\
\frac{dp_j}{dt}&=\frac{\partial V(r_{j+1})}{\partial r_{j+1}}-
\frac{\partial V(r_{j})}{\partial r_{j}}.
\end{dcases}
\end{align*}
The Liouville operator corresponding to this system is 
\begin{align*}
\L(\bm p,\bm r)=\sum_{i=1}^{N-1}\left[
\left(\frac{\partial V(r_{i+1})}{\partial r_{i+1}}
-\frac{\partial V(r_i)}{\partial r_i}\right)\frac{\partial}{\partial p_i}+\frac{1}{m}(p_i-p_{i-1})\frac{\partial}{\partial r_i}\right].
\end{align*}
Setting $V(x)=\alpha x^2/2+\beta x^4/4$ yields 
the well-known Fermi-Pasta-Ulam $\beta$-model 
\cite{mendl2015current}, which we 
study hereafter.
To this end, suppose we are interested in the distance 
between the oscillators $j$ and $j-1$, i.e., in the polynomial 
observable $u(\bm p,\bm r)=r_j$. The action of 
$\L^n$ on $r_j$ can be explicitly written as  
\begin{align}\label{poly_form}
\L^nr_j=
\sum_{\bm b_{i}\in B^{(n)}}a_{\bm b_{i}}^{(n)}
r_{k_{1}}^{m_{k_{1}}^{(i)}}\cdots r_{k_{u}}^{m^{(i)}_{k_{u}}}
p_{l_{1}}^{s^{(i)}_{l_{1}}}\cdots p_{l_{v}}^{s^{(i)}_{l_{v}}},
\end{align}
where $\{k_{1},\dots,k_{u}\}$ and $\{l_{1},\dots,l_{v}\}$ are 
the relevant degrees of freedom for the polynomials of $\bm r$ and $\bm p$, respectively, at iteration $n$. 
We can explicitly compute the sets of such 
relevant degrees of freedom as  
\begin{align}
K_r(n,j)=\left\{j-\left\lfloor\frac{n}{2}\right\rfloor,\dots,j+\left\lfloor\frac{n}{2}\right\rfloor\right\}\quad
L_p(n,j)=\left\{j-\left\lfloor\frac{n+1}{2}\right\rfloor,\dots,j+\left\lfloor\frac{n-1}{2}\right\rfloor\right\},
\end{align}
The action of the Liouville operator on each monomial
appearing in \eqref{poly_form} can be written as 
\begin{align}\label{L_iteration}
\L 
r_{k_{1}}^{m^{(i)}_{k_{u}}} r_{k_{u}}^{m^{(i)}_{k_{u}}} 
p_{l_{1}}^{s^{(i)}_{l_{1}}} \cdots p_{l_{v}}^{s^{(i)}_{l_{v}}} 
=\sum_{v\in K_r(n,j)} \sum_{h\in L_p(n,j)} (\L_{r_{v}}+\L_{p_{h}})
r_{k_{1}}^{m^{(i)}_{k_{1}}}\cdots r_{k_{u}}^{m^{(i)}_{k_{u}}}
p_{l_{1}}^{s^{(i)}_{l_{1}}}\cdots p_{l_{v}}^{s^{(i)}_{l_{v}}},
\end{align} 
where 
\begin{equation}
\L_{r_{v}} =\frac{1}{m}(p_v-p_{v-1})\frac{\partial}{\partial r_v},
 \qquad \text{and}\qquad 
 \L_{p_{h}}=\left[ \alpha(r_{h+1}-r_{h})+\beta\left(r_{h+1}^3-r_{h}^3\right)\right]\frac{\partial}{\partial p_h}.
\end{equation}
By computing the action of $\L_{r_{v}}$ 
and $\L_{p_{h}}$ on the monomial 
$r_{k_{1}}^{m^{(i)}_{k_{1}}}\cdots r_{k_{u}}^{m^{(i)}_{k_{u}}}
p_{l_{1}}^{s^{(i)}_{l_{1}}}\cdots _{l_{v}}^{s^{(i)}_{l_{v}}}$ 
we obtain explicit linear 
maps of the form \eqref{linearMAP}, involving the polynomial exponents 
\begin{equation}
\bm b_i=[\bm m^{(i)},\bm s^{(i)}],\qquad 
\bm m^{(i)}=[m^{(i)}_{k_1}, \dots ,m^{(i)}_{k_u}],\qquad 
\bm s^{(i)}= [s^{(i)}_{l_1},\dots,s^{(i)}_{l_v}],
\end{equation}
and the polynomial  coefficients 
$a_{\bm b_i}^{(n)}$.
With such maps available, we can transform the 
combined index set $\I^{(n)}$ (representing $\L^n r_j$)
to $\I^{(n+1)}$ (representing $\L^{n+1}r_j$) 
using \eqref{iter_formula}. Specifically, we obtain
\begin{align*}
\I^{(n+1)}=\I^{(n+1)}_r \biguplus \I^{(n+1)}_p,
\end{align*}
where 
\begin{align*}
\I^{(n+1)}_r&=\biguplus_{v\in K_r(n,j)}\biguplus_{i=1}^{\# B^{(n)}}
\biguplus_{k=0}^{1}\left\{m_v^{(i)}(-1)^ka_{\bm b_i}^{(n)},
[\bm m^{(i)}-\bm e_v,\bm s^{(i)}+\bm e_{v-k}]\right\},\\
 \I^{(n+1)}_p&=\biguplus_{h\in L_p(n,j)}\biguplus_{i=1}^{\# B^{(n)}}
\biguplus_{k=0}^{1}\left\{\{s_{h}^{(i)}(-1)^{k+1}
\alpha a_{\bm b_i}^{(n)},s_{h}^{(i)}(-1)^{k+1}
\beta a_{\bm b_i}^{(n)} \},\right.\\
& \hspace{3.3cm}\left.\{[\bm m^{(i)}+\bm e_{h+k},\bm s^{(i)}-\bm e_{h}],
[\bm m^{(i)}+3\bm e_{h+k},\bm s^{(i)}-\bm e_{h}]\}\right\}.
\end{align*}

\section{ Modeling the MZ fluctuation term}
\label{sec:Model}
 {
In previous Sections we discussed an algorithm 
to approximate the memory 
kernel in the MZ equation \eqref{gle_full} or \eqref{gle} based 
on the microscopic equations of motion (first-principle calculation).
In this Section we construct a statistical model for fluctuation term 
$\bm f(t)$ appearing in \eqref{gle_full}, 
which will allow us to compute statistical properties 
of the quantity of interest $\bm u(t)$ 
beyond two-point correlations. A possible way 
to build such model is to expand \eqref{f} in a 
finite-dimensional series}\footnote{Note 
that $\bm f(t)$ is a random process obtained by mapping the 
random initial state $\bm u(0)=\bm u(\bm x_0)$ forward in time 
using the orthogonal dynamics propagator $e^{t\Q\L(\bm x_0)}$.} 
 {(see Eq. \eqref{generalS}) as
\begin{equation}
\bm f(t) \simeq \sum_{q=0}^n g_q(t)\Phi_q(\Q\L)\Q\L \bm u(0),
\label{fa}
\end{equation}
and evaluate the coefficients $\Phi_q(\Q\L)\Q\L \bm u(0)$ 
using the combinatorial approach discussed in Section \ref{sec:calculation_of_gamma}. 
However, this may not be straightforward since $\Phi_q(\Q\L)\Q\L \bm u(0)$ is a high-dimensional random field. }
An alternative approach is to ignore the mathematical 
structure of $\bm f(t)$, i.e., equation 
\eqref{f} or the series expansions \eqref{fa}, 
and simply model $\bm f(t)$ as a stochastic 
process. In doing so, we need to make sure that 
the statistical properties of the reduced-order model, e.g., 
the equilibrium distribution, are consistent with the 
full model. Such consistency conditions carry over 
a certain number of constraints on $\bm f(t)$, 
which allow for its {\em partial} identification. As an example, 
consider the following MZ model recently 
proposed by Lei {\em et al.} in \cite{lei2016data} to 
study the dynamics of a tagged particle 
in a large particle system
\begin{align}
\begin{dcases}
\dot{\bm{q}}&=\frac{\bm {p}}{m}\\
\dot{\bm{p}}&=\bm {F}(\bm{q})+\bm{d}\\
\dot{\bm{d}}&=\bm{B_0}\bm{d}-\bm{A}_0\frac{\bm{p}}{m}+\bm{f}(t)\\
\end{dcases}\label{lei_gle}
\end{align}
It was shown in \cite{lei2016data} that if $\bm f(t)$ is 
Gaussian white noise (in time) with auto-correlation 
function 
\begin{equation}
\langle \bm {f}(t)\bm{f}(t')\rangle=
-\beta \left(\bm{B}_0 \bm{A}_0+
\bm {A}_0\bm {B}_0^T\right)\delta(t-t'),
\end{equation} 
then the equilibrium distribution
of the particle system has the Boltzmann-Gibbs form
\begin{align}
\rho(\bm{p},\bm{q},\bm{d})\propto 
\exp\left\{-\beta\left(\frac{1}{2m}\left| \bm{p}\right|^2+\frac{1}{2}\bm{d}^T\bm{A}_0^{-1}\bm{d}+
V(\bm{q})\right)\right\},
\label{eqU}
\end{align} 
$V(\bm q)$ being the inter-particle potential. 
However, modeling $\bm f(t)$ as a Gaussian  process
does not provide satisfactory statistics in MZ equations 
is we use Mori's projection. In fact, 
equation \eqref{gle_full} is linear and therefore 
the equilibrium distribution of $\bm u(t)$ 
(assuming it exists) under Gaussian noise $\bm f(t)$ 
will be necessarily Gaussian. In most applications, however, 
the equilibrium distribution of $\bm u(t)$ is strongly 
non-Gaussian. To overcome this difficulty 
Chu and Li  \cite{chu2017mori} recently developed a 
multiplicative Gaussian noise model that generalizes 
\eqref{gle_full} in the sense that it is not based on 
additive noise, and it allows for non-Gaussian 
responses.

In this Section we propose a different stochastic modeling 
approach for $\bm f(t)$ based on bi-orthogonal representations 
random processes \cite{Venturi2,Venturi1,Venturi,Aubry,Aubry1}. 
To describe the method, we study the case where the observable  
$\bm u(t)$ is real valued (one-dimensional) and 
square integrable. This allows us to develop the 
theory in a clear and concise way. We also assume that the 
system is in statistical equilibrium, i.e., that there exists an 
equilibrium distribution $\rho_{eq}(\bm x)$ (or more generally 
an invariant measure) for the phase variables 
$\bm x(t,\bm x_0)$, and that $\bm x_0$ is 
sampled from such distribution. 
The MZ equation \eqref{gle_full} for one-dimensional 
phase space functions $u(t)=u(\bm x(t,\bm x_0))$ 
reduces to 
\begin{equation}
\frac{du(t)}{dt} = \Omega u(t) + \int_0^t K(t-s)u(s)ds +f(t), 
\label{gle_full_1D}
\end{equation}
where 
\begin{equation}
\Omega = \frac{\langle u(0), \L u(0)\rangle_{eq}}{\langle u(0), u(0)\rangle_{eq}},
\qquad 
K(t)=\frac{\langle u(0), \L f(t) \rangle_{eq}}{\langle u(0), u(0)\rangle_{eq}},
\qquad
f(t)=e^{t\Q\L}\Q\L u(0).
\label{OmegaKf}
\end{equation}
Since $u(t)$ is assumed to be a second-order random 
process in the time interval $[0,T]$, we can expand it in a 
truncated Karhunen-Lo\'{e}ve series
\begin{align}
u(t)\simeq \overline{u}+\sum_{k=1}^{K}\sqrt{\lambda_k}\xi_k e_k(t), \qquad t\in[0,T]
\label{KL_Sample}
\end{align}
where $\overline{u}$ denotes the mean of $u(t)$ relative to the 
equilibrium distribution\footnote{The mean 
of $u(t)=u(\bm x(t,\bm x_0))$ is necessarily time-independent
 at statistical equilibrium. In fact, at equilibrium we have 
that $\bm x_0\sim \rho_{eq}$ implies 
that $\bm x(t) \sim \rho_{eq}$ for all $t\geq 0$. 
A statistically stationary process however, may not be stationary in phase 
space. Indeed, $\bm x(t)$ evolves in time, 
eventually in a chaotic way as it happens for 
systems with strange attractors  and invariant 
measures.}, 
$\{\xi_1,\dots, \xi_K\}$ are uncorrelated random 
variables ($\langle\xi_i\xi_j\rangle_{eq}=\delta_{ij}$), 
and $\{\lambda_k,e_k(t)\}$ ($k=1,\dots,K$) are, respectively, 
eigenvalues and eigenfunctions of the homogeneous Fredholm integral 
equation of the second kind
\begin{equation}
\int_{0}^T\langle u(t)u(s)\rangle_{eq} e_k(s)ds=\lambda_k e_k(t), 
\qquad t\in[0,T].
\label{KLeigen}
\end{equation}
We recall that for ergodic systems in statistical equilibrium the 
auto-correlation function $\langle u(t)u(s)\rangle_{eq}$ decays to 
zero as $|t-s|\rightarrow \infty$. 
Also, the integral operator at the left hand side 
of \eqref{KLeigen} is positive-definite and 
compact \cite{Aubry1}. 
The orthogonal random variables 
$\xi_k$ and the temporal modes $e_k(t)$ are 
related to each other by the following dispersion 
relations \cite{Aubry,Venturi2}
\begin{equation}
\xi_k=\frac{1}{\sqrt{\lambda_k}}\int_{0}^T u(t)e_k(t)dt,\qquad  
e_k(t)=\frac{\langle u(t)\xi_k\rangle_{eq}}{\sqrt{\lambda_k}} \qquad 
k=1, 2, \dots.
\label{dispersion}
\end{equation}
Equation \eqref{dispersion} suggests that if $u(t)$ is 
a Gaussian random process (e.g., a Wiener process) 
then $\{\xi_1,\dots,\xi_K\}$ are necessarily 
independent Gaussian random variables. 
On the other hand, if $u(t)$ is  
non-Gaussian then the joint distribution of
$\{\xi_1, \dots,\xi_K\}$ is unknown, although  
it can be in principle computed  
by using the transformation $u(t) \rightarrow \xi_k$ ($k=1,..,K$) 
defined in \eqref{dispersion}, given  
$\lambda_k$ and $e_k(t)$. 

An alternative approach to identify the PDF of 
$\{\xi_1,\dots,\xi_K\}$ relies on sampling. 
In particular, as recently shown by Phoon {\em et al.} 
\cite{phoon2002simulation,phoon2005simulation}, it is possible 
to develop effective sampling algorithms for the 
KL expansion \eqref{KL_Sample}. 
Such algorithms allow to sample the 
uncorrelated variables $\{\xi_1,\dots,\xi_K\}$ 
in a way that makes the PDF of $u(t)$ 
consistent with the equilibrium distribution, 
which can be calculated by mapping 
$\bm x_0\sim \rho_{eq}(\bm x_0)$ to $u(\bm x_0)$.
At this point, we have available a consistent bi-orthogonal 
representation of the random process $u(t)$ defined by the series 
expansion \eqref{KL_Sample}. It is straightforward to see that 
such representation yields a corresponding series expansion 
of the fluctuation term $f(t)$ in \eqref{gle_full_1D}. 
In fact we have the following

\begin{prop}\label{prop:prop1}
For any bi-orthogonal series expansion 
\eqref{KL_Sample} of the solution to the MZ-equation 
\eqref{gle_full_1D}, there exists a unique series expansion 
of the fluctuation term $f(t)$ of the form 
\begin{equation}
f(t)=\overline{f}+\sum_{k=1}^{K} \sqrt{\lambda_k} \xi_k h_k(t).
\label{noise_expansion}
\end{equation}
\end{prop}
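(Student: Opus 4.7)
The plan is to derive the claimed expansion of $f(t)$ by direct substitution of the KL representation \eqref{KL_Sample} into the MZ equation \eqref{gle_full_1D}, and to obtain uniqueness from the bi-orthogonality of $\{\xi_k\}$. First I would solve \eqref{gle_full_1D} algebraically for the fluctuation term,
\begin{equation*}
f(t) = \dot{u}(t) - \Omega u(t) - \int_0^t K(t-s) u(s)\, ds,
\end{equation*}
and then substitute $u(t) = \overline{u} + \sum_{k=1}^K \sqrt{\lambda_k}\, \xi_k e_k(t)$. Since $\overline{u}$ is time-independent at statistical equilibrium (as noted in the footnote following \eqref{KL_Sample}), termwise differentiation and convolution yield
\begin{equation*}
f(t) = \left[-\Omega \overline{u} - \overline{u}\int_0^t K(t-s)\,ds\right] + \sum_{k=1}^K \sqrt{\lambda_k}\, \xi_k \left[\dot{e}_k(t) - \Omega e_k(t) - \int_0^t K(t-s) e_k(s)\,ds\right].
\end{equation*}
Identifying the bracket independent of $\xi_k$ as $\overline{f}(t)$ and the bracketed factor of $\sqrt{\lambda_k}\xi_k$ as $h_k(t)$ produces the form \eqref{noise_expansion} and simultaneously yields explicit formulas for its coefficients in terms of $\Omega$, $K$, $\overline{u}$, and the KL modes $e_k(t)$.

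For uniqueness I would exploit the bi-orthogonality relations $\langle \xi_k\rangle_{eq}=0$ and $\langle \xi_i\xi_j\rangle_{eq}=\delta_{ij}$, which imply that $\{1,\xi_1,\dots,\xi_K\}$ is a linearly independent family in $L^2(\M,\rho_{eq})$. Specifically, suppose $\overline{f}(t) + \sum_k \sqrt{\lambda_k}\xi_k h_k(t) = \overline{f}'(t) + \sum_k \sqrt{\lambda_k}\xi_k h_k'(t)$ for each fixed $t\in[0,T]$. Taking the equilibrium expectation of both sides annihilates the stochastic part and gives $\overline{f}(t) = \overline{f}'(t)$; subsequently projecting the residual identity onto $\xi_j$ via the inner product $\langle \cdot, \xi_j\rangle_{eq}$ and using $\lambda_j>0$ gives $h_j(t) = h_j'(t)$ for every $j=1,\dots,K$.

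The main technical delicacy I expect is the justification of termwise differentiation of the KL expansion, since \eqref{KL_Sample} is only guaranteed to converge in mean square. This requires that $\dot{e}_k(t)$ exist and that the series $\sum_k \sqrt{\lambda_k}\xi_k \dot{e}_k(t)$ represents $\dot{u}(t)$ in an appropriate sense. Such regularity is inherited from the smoothness of the auto-correlation kernel $\langle u(t)u(s)\rangle_{eq}$, which in turn follows from $u\in \P H \cap\D(\L)$ (so that $\mathcal{L}u(0)\in H$ provides a square-integrable time derivative), together with standard regularity of Fredholm eigenfunctions in \eqref{KLeigen} and summability of $\{\lambda_k\}$. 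Once these analytic prerequisites are checked, the rest of the argument is purely algebraic.
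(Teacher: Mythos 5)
Your proposal is correct, and its core step—solving \eqref{gle_full_1D} for $f(t)$, inserting the KL expansion of $u(t)$, and reading off $h_k(t)=\dot e_k(t)-\Omega e_k(t)-\int_0^t K(t-s)e_k(s)\,ds$—is exactly the paper's Eq.~\eqref{eqn_1}. Where you diverge is in what you do with the memory kernel and in how you argue uniqueness. You treat $K(t)$ as a given deterministic function, so your formula for $h_k$ is already explicit, and you obtain uniqueness from the linear independence of $\{1,\xi_1,\dots,\xi_K\}$ in $L^2(\M,\rho_{eq})$ (take expectations, then project onto $\xi_j$). The paper instead observes that $K(t)$ is itself expressible through $f(t)$ via \eqref{OmegaKf}, substitutes the ansatz \eqref{noise_expansion} into that expression to get $K(t)=\sum_{i,j}\sqrt{\lambda_i\lambda_j}\,v_{ij}e_i(0)h_j(t)$, and thereby closes the system into a Volterra-type integral equation for $\{h_1,\dots,h_K\}$ alone, solved uniquely by Laplace transform. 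Your route is cleaner as a proof of the stated existence-and-uniqueness claim; the paper's route additionally buys a constructive algorithm that determines the $h_k$ from the KL data $(\lambda_k,e_k,\xi_k)$ without presupposing $K$, which is what the subsequent modeling section actually uses. Two small points: the paper sidesteps the mean by reducing to $\overline u=\overline f=0$, whereas your computation correctly shows that for $\overline u\neq 0$ the non-stochastic term $-\Omega\overline u-\overline u\int_0^tK(s)\,ds$ is generally time-dependent, so the constant $\overline f$ in \eqref{noise_expansion} should be read in the zero-mean reduction; and your closing remarks on termwise differentiability address a regularity issue the paper passes over in silence, which is a welcome addition rather than a defect.
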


\begin{proof}
It is sufficient to prove the theorem for zero-mean 
processes. To this end, we set $\overline{u}=0$ 
and $\overline{f}=0$ in \eqref{KL_Sample} and
\eqref{noise_expansion}. A substitution 
of \eqref{KL_Sample} into \eqref{gle_full_1D} 
yields, for all $t\in [0,T]$
\begin{align}
f(t) = & \sum_{k=1}^{K} 
\sqrt{\lambda_k}\xi_k \left(\frac{de_k(t)}{dt} - \Omega 
e_k(t)-\int_0^t K(t-s)e_k(s)ds\right).\label{seriesf1}
\end{align}
Define, 
\begin{align}\label{eqn_1}
h_k(t) = \frac{de_k(t)}{dt} - \Omega e_k(t)-\int_0^t K(t-s)e_k(s)ds.
\end{align}
This equation does not allow us to compute $h_k$ explicitly quite yet. 
In fact, the MZ memory kernel $K(t-s)$ depends on $f(t)$ 
(see Eq. \eqref{OmegaKf}). However, a substitution of \eqref{noise_expansion} (with $\overline{f}=0$) 
into the analytical expression of $K(t)$ yields 
\begin{align}
K(t)=\sum_{i,j=1}^K \sqrt{\lambda_i\lambda_j}  v_{ij}e_i(0)h_j(t),\qquad \text{where}\qquad 
v_{ij}=\frac{\langle \xi_i,\L \xi_j\rangle_{eq}}{\langle u(0), u(0)\rangle_{eq}}. 
\label{kernel}
\end{align}
To evaluate $\langle \xi_i,\L \xi_j\rangle_{eq}$ we need to express 
$\{\xi_1,\dots,\xi_K\}$ as a function of $\bm x_0$ 
(recall that $\L$ operates on functions of $\bm x_0$,  
see Eq. \eqref{Koopman}), and then integrate 
over $\rho_{eq}(\bm x_0)$. This is easily achieved 
by using the dispersion relation \eqref{dispersion}. Specifically, 
we have
\begin{equation}
\xi_k(\bm x_0)=\frac{1}{\sqrt{\lambda_k}}\int_{0}^T
 u(\bm x(t,\bm x_0))e_k(t)dt.
\end{equation}
At this point, we substitute \eqref{kernel} into \eqref{eqn_1} to obtain
\begin{align}
h_k(t) = \frac{de_k(t)}{dt} - \Omega e_k(t)-\sum_{i,j=1}^K 
\sqrt{\lambda_i\lambda_j}   v_{ij}
e_i(0) \int_{0}^t h_j(t-s)e_k(s)ds.
\label{hmodes}
\end{align}
Given $\{e_1(t),\dots,e_K(t)\}$, $\Omega$ and $v_{ij}$, this 
equation can be solved uniquely for $\{h_1(t),\dots,h_K(t)\}$ 
by using Laplace transforms. Note that
$\{h_1(t),\dots,h_K(t)\}$ are not necessarily 
orthogonal in $L^2([0,T])$.

\end{proof}

\paragraph{Remark} If the dynamical 
system \eqref{eqn:nonautonODE} is Hamiltonian 
then the MZ steaming term vanishes, and 
the MZ memory kernel  can be written in terms the fluctuation 
term  as (see Eq. \eqref{2nd_FDT}) 
\begin{equation}
K(t)=\frac{\langle f(0),f(t)\rangle_{eq}}{\langle u(0),u(0)\rangle_{eq}}.
\end{equation}
A substitution of this expression into  
\eqref{gle_full_1D} yields, after projection onto $\xi_k$
\begin{align}\label{dd1}
\frac{de_k(t)}{dt}=\int_0^t\sum_{j=1}^K \lambda_{j}\left[h_{j}(0)h_{k'}(t-s)\right]e_k(s)ds+h_k(t).
\end{align}
This equation establishes a one-to-one correspondence between 
the temporal modes of the KL expansion \eqref{KL_Sample}
and the temporal modes of the fluctuation term \eqref{seriesf1}. 
In particular, given $\{e_1(t),\dots,e_K(t)\}$, we can 
determine $\{h_1(t),\dots,h_K(t)\}$ directly by using
Laplace transforms, without building the MZ memory 
kernel \eqref{kernel}.

\subsection{Building MZ-KL stochastic models from first principles}
Proposition \ref{prop:prop1} establishes a one-to-one 
correspondence between the noise process in the MZ 
equation \eqref{gle_full_1D} and the biorthogonal series 
expansion of the solution. This new paradigm allows us  
to build stochastic models for the observable $u(t)$ 
at statistical equilibrium from first principles. 
To this end, 
\begin{enumerate}
\item  Compute the solution to the MZ equation 
for the temporal correlation function of $u(t)$ (see Eq. \eqref{gle_C})
\begin{equation}
\frac{dC(t)}{dt}=\Omega C(t) + \int_{0}^t 
K(t-s)C(s) ds.
\label{gle_C_1D}
\end{equation}
The memory kernel $K(t-s)$ can be expanded as in 
\eqref{Mq}, and computed from first-principles using the 
combinatorial approach we discussed in Section \ref{sec:calculation_of_gamma}.

\item Build the Karhunen-Lo\`eve expansion 
\eqref{KL_Sample} by spectrally decomposing the 
correlation function $C(t)=\langle u(0)u(t)\rangle_{eq}$ 
 obtained at point 1. Recall that at statistical 
equilibrium we have 
$C(t-s)=\langle u(0)u(t-s)\rangle_{eq}=\langle u(s)u(t)\rangle_{eq}$.
This yields eigenvalues $\{\lambda_j\}$ and the eigenfunctions 
$e_j(t)$. The uncorrelated random variables $\xi_k$ appearing 
in \eqref{KL_Sample} can be sampled consistently with 
the equilibrium distribution $\rho_{eq}$ by using, e.g., the 
iterative algorithm recently proposed by Phoon 
{\em et al.} \cite{phoon2005simulation,phoon2002simulation}.

\item With $\{\xi_1,\dots,\xi_K\}$, $\{e_1(t),\dots,e_K(t)\}$ 
and  $\{\lambda_1,\dots,\lambda_K\}$ available, we 
can uniquely identify the noise process $f(t)$ in 
the MZ equation \eqref{gle_full_1D}. To this end, we simply 
use Proposition \ref{prop:prop1}, with the temporal 
modes $h_{k}(t)$ obtained by solving 
equation \eqref{hmodes} or \eqref{dd1} with the Laplace transform.

\item {  With $K(t)$ computed from first principles, 
and $f(t)$ modeled based on the auto-correlation 
function $C(t)$,  we can generate samples of the 
observable $u(t)$ by solving equation \eqref{gle_full_1D}.} 

\end{enumerate}

\paragraph{Remark}
{  We emphasize that the correlation function $C(t)$ 
can be also computed directly from data, e.g., by using a Monte-Carlo or 
a quasi Monte Carlo method \cite{qmc}. 
With $C(t)$ available it is possible to determine the 
fluctuation term $f(t)$ with equation \eqref{noise_expansion}
and the MZ memory kernel $K(t)$ using equation  \eqref{kernel}.}\\

The results of this Section 
can be generalized to vector-valued phase space functions 
$\bm u(t)$ at statistical equilibrium. The starting point is 
the KL expansion for multi-correlated stochastic processes 
we recently proposed in \cite{Cho2013}. Such expansion is 
constructed based on cross-correlation 
information\footnote{At statistical equilibrium the 
cross correlation functions are invariant under temporal shifts. 
This means that $\langle u_{i}(s),u_{j}(t)\rangle_{eq}=
\langle u_{i}(0),u_{j}(t-s)\rangle_{eq}$ for all $t\geq s$. 
Hence, the solution to the projected MZ 
equation \eqref{gle_C} is sufficient to compute the KL expansion 
of the multi-correlated process $\bm u(t)$, e.g., using the 
series expansion method proposed in \cite{Cho2013}.}, and 
can be made consistent with the equilibrium distribution of 
$\bm u(t)$, e.g., by using the sampling strategy proposed in
\cite{phoon2005simulation,phoon2002simulation}.
The correspondence between the KL expansions 
of $\bm u(t)$ and the vector-valued fluctuation 
term $\bm f(t)$ can be established
by following the same arguments we used in the 
proof of Proposition \ref{prop:prop1}.

\section{Applications to nonlinear systems with local interactions}
\label{sec:application}
In this Section,  we demonstrate the accuracy of 
the MZ memory calculation method and the 
reduced-order stochastic modeling technique we 
discussed in Section \ref{sec:recursive} and 
Section \ref{sec:Model}, respectively. 
To this end, we study nonlinear random wave 
propagation described by Hamiltonian partial 
differential equations (PDEs). To derive such PDEs  
consider the nonlinear functional
\begin{align}\label{Hamiltonian_NLW}
\H([p],[u])=\int_{0}^{2\pi}\left[\frac{p^2}{2}+\frac{\alpha}{2}
u_x^2 +G(p,u_x, u)\right]dx,
\end{align}
where $u=u(x,t)$ represents the 
wave displacement, $p=p(x,t)$ is the canonical 
momentum (field variable 
conjugate to $u(x,t)$), $u_x=\partial u/\partial x$, 
and $G(p, u_x,u)$ is the nonlinear interaction term. 
By taking functional derivatives of \eqref{Hamiltonian_NLW} with respect to $p$ and $u$ (see, e.g., \cite{venturi2018numerical}) 
we obtain the Hamilton's equations of motion
\begin{align}
\begin{dcases}
\partial_t u&=\frac{\delta\H(p,u)}{\delta p(x,t)}=p+\partial_pG(p, u_x,u),\\
\partial_t p&=-\frac{\delta\H(p,u)}{\delta u(x,t)}=\alpha  u_{xx} + \partial_{x}\partial_{u_{x}}G(p,u_x,u)-\partial_uG(p,u_x ,u).
\end{dcases}
\label{eq:wavesystem}
\end{align}
The corresponding nonlinear wave equation is 
\begin{align}\label{eqn:NLWE}
u_{tt}=\alpha u_{xx}+\partial_t\partial_pG(p, u_x,u)+
\partial_{x}\partial_{u_{x}}G(p,u_x,u)-\partial_uG(p,u_x ,u).
\end{align}
This equation has been studied extensively in mathematical 
physics 
\cite{klainerman1980global,donninger2012stable,mc1994statistical,parisi1988statistical}, 
in particular in general relativity, statistical mechanics, 
and in the theory of viscoelastic fluids.  
In Figure \ref{fig:NLW_flow} and Figure \ref{fig:NLW_path}, we 
plot a few sample numerical solutions to \eqref{eqn:NLWE} 
corresponding to different initial conditions and different 
nonlinear interaction term $G(p, u_x , u)$. 
These solutions are computed by an accurate Fourier spectral 
method with $N=512$ modes (periodic boundary conditions 
in $x\in[0,2\pi]$).
Throughout this Section, we assume that the initial 
state $\{u(x,0), p(x,0)\}$ is random and distributed 
according to the functional Boltzmann-Gibbs equilibrium 
distribution\footnote{The partition 
function $Z(\alpha,\gamma)$ is defined 
as a functional integral
over $u(x)$ and $p(x)$ (see, e.g., \cite{venturi2018numerical}).}
\begin{align}\label{equlibrium}
\rho_{eq}([p],[u]) = \frac{1}{Z(\alpha,\gamma)}
e^{-\gamma\H([p],[u])},
\quad \text{where}\qquad 
Z(\alpha,\gamma)=\int e^{-\gamma\H(p,u)}\D[p(x)]\D[u(x)].
\end{align}
We emphasize that $\rho_{eq}([p],[u])$ is invariant 
under the infinite-dimensional flow generated 
by \eqref{eqn:NLWE} with periodic boundary conditions, 
since the Hamiltonian \eqref{Hamiltonian_NLW} 
is a constant of motion (conserved quantity) in this case.
\begin{figure}[ht!]
\centering
\centerline{\hspace{0.4cm}
${G=0}$\hspace{5.5cm}
${G=\beta u_x^4/4}$
}

\centerline{
\includegraphics[height=5.0cm]{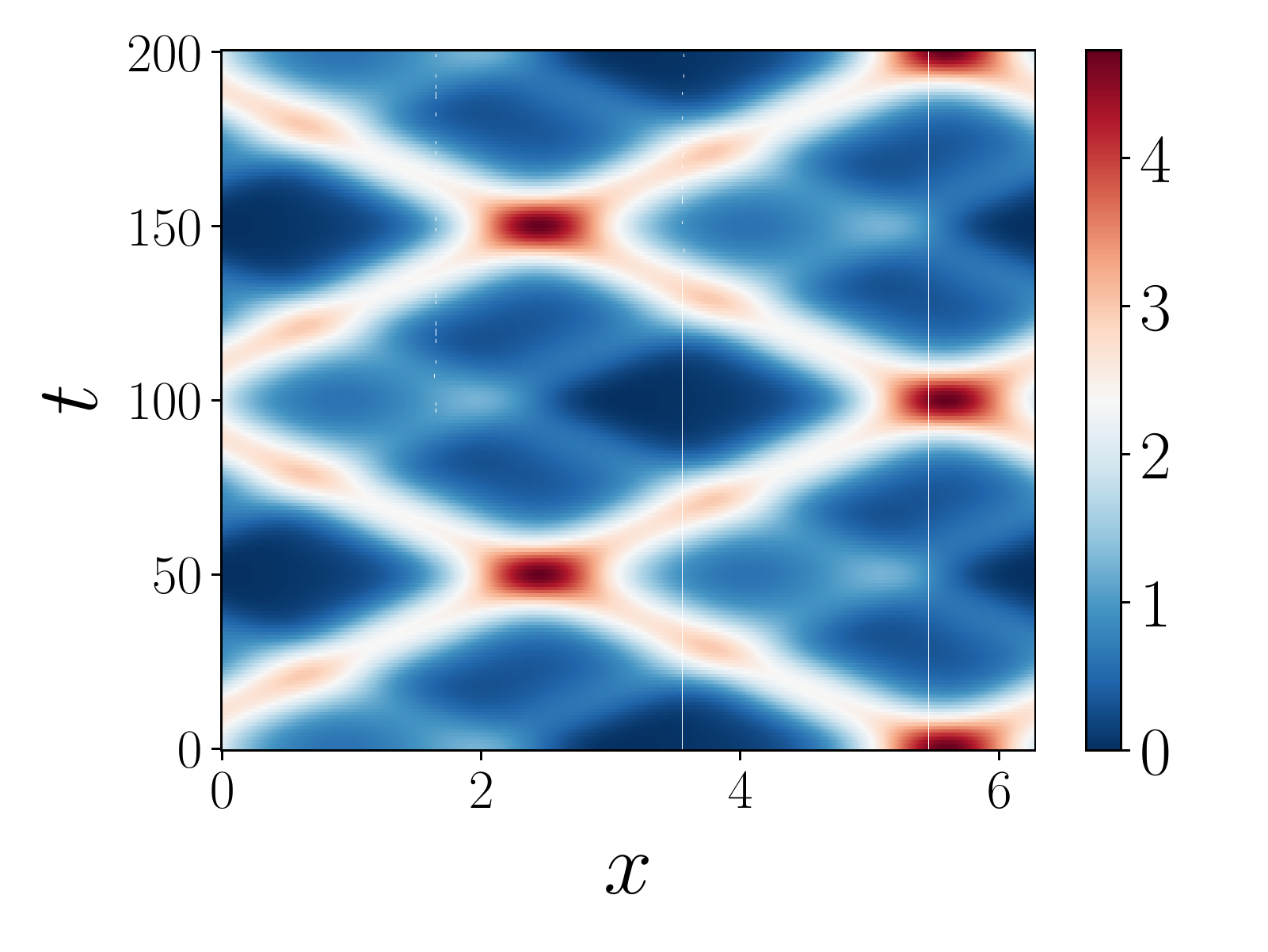} 
\includegraphics[height=5.0cm]{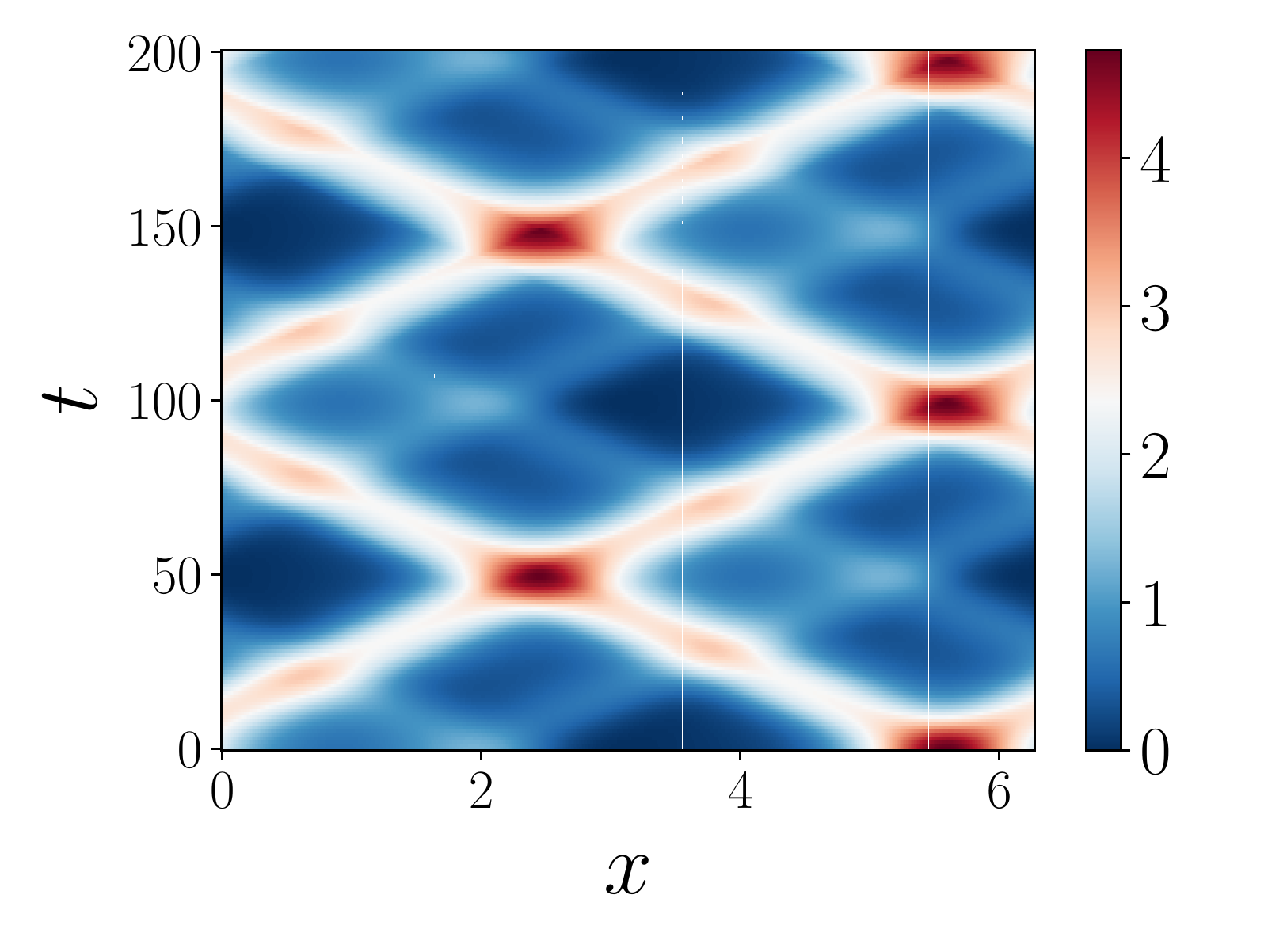}
}
\centerline{
\includegraphics[height=5.0cm]{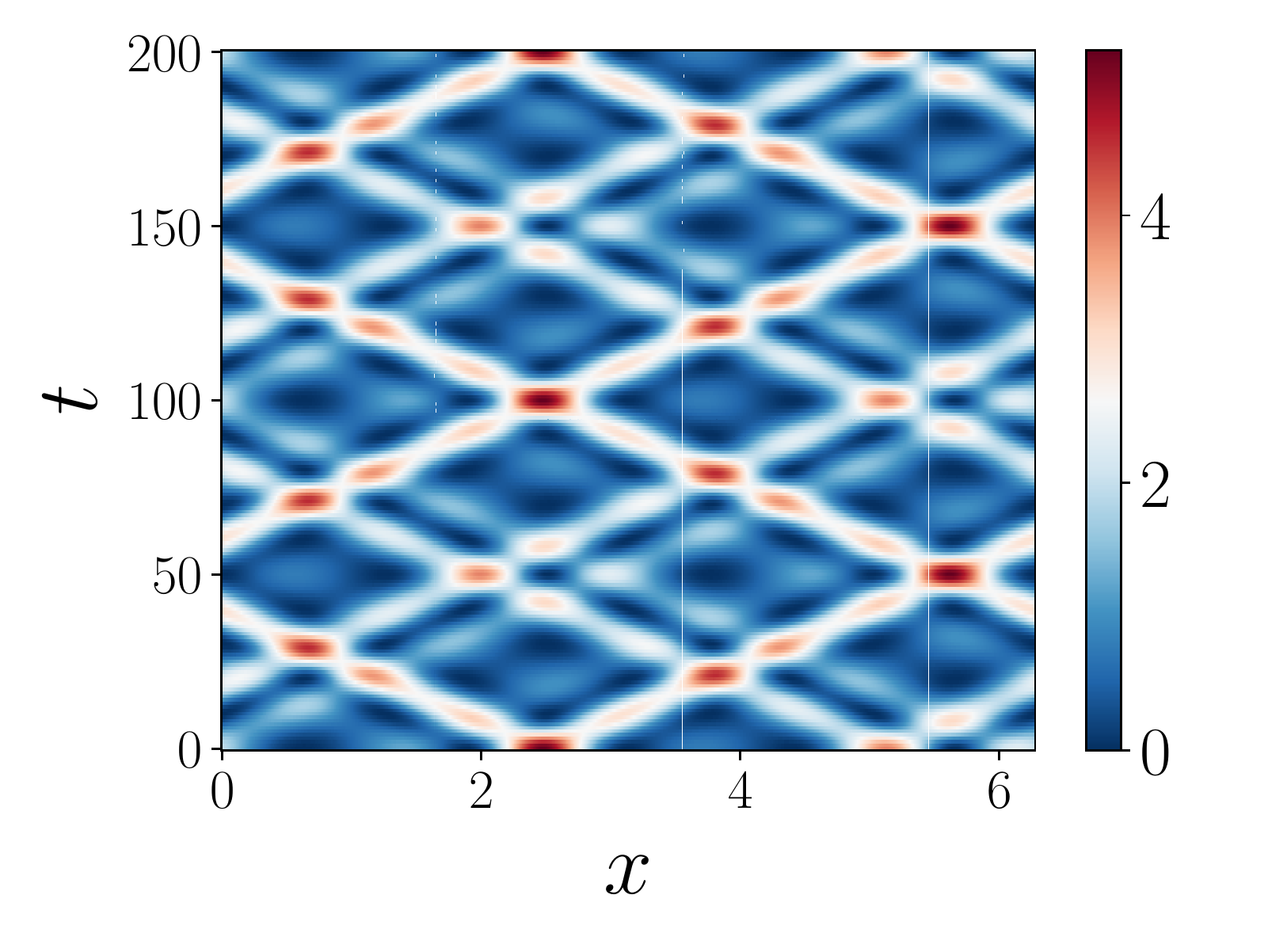}
\includegraphics[height=5.0cm]{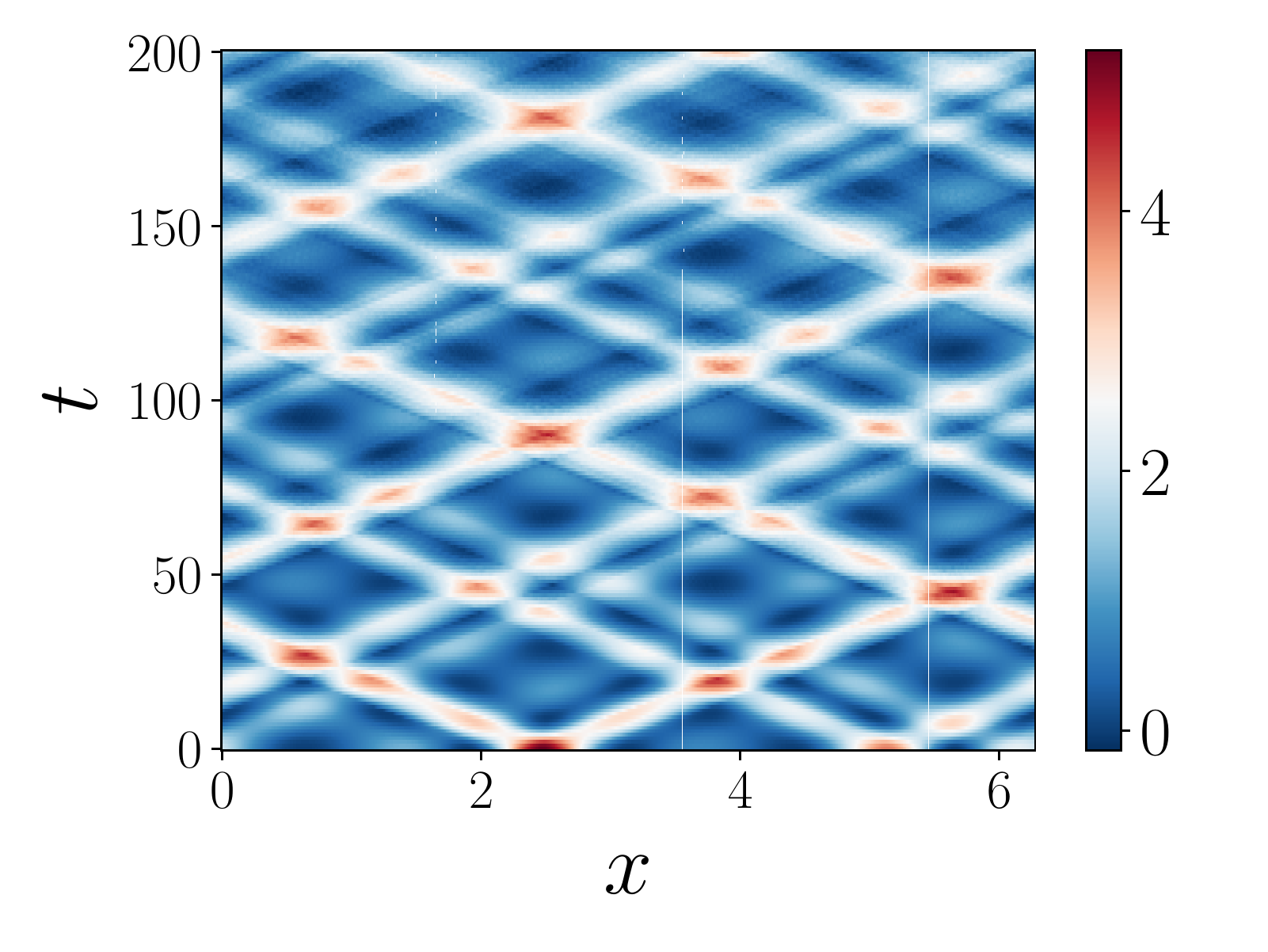}
}
\centerline{
\includegraphics[height=5.0cm]{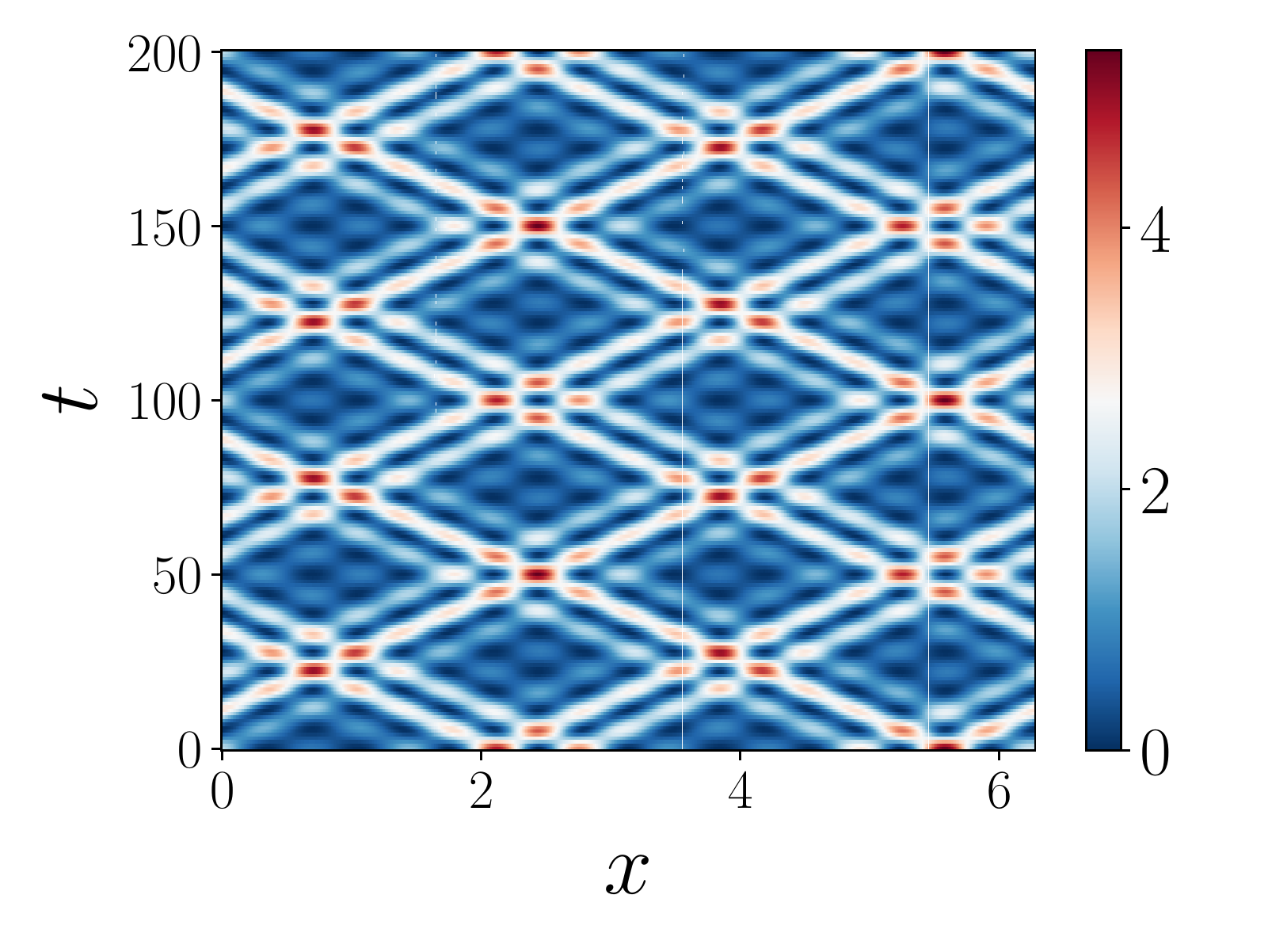}
\includegraphics[height=5.0cm]{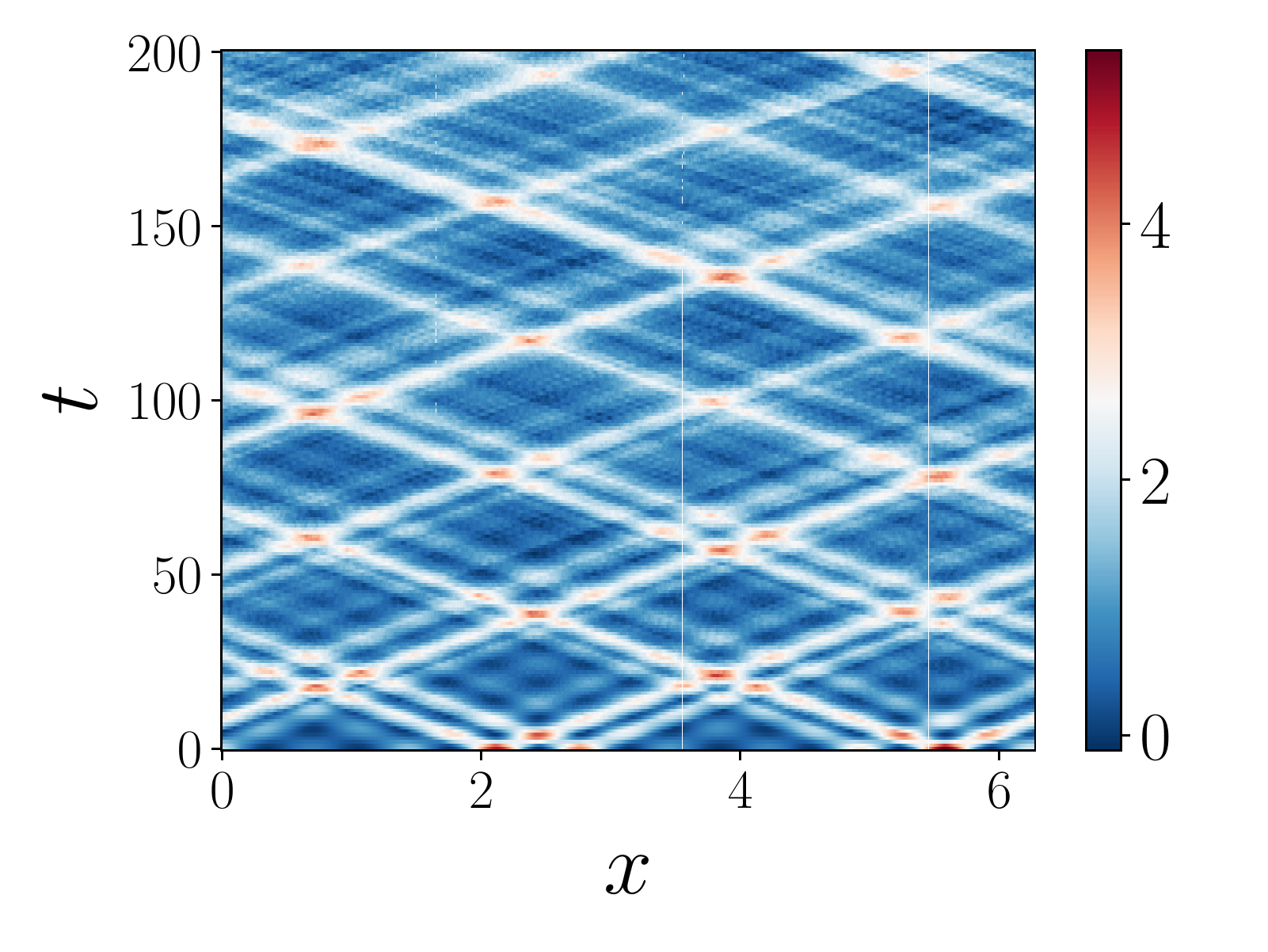}
}
 
\caption{Sample solutions of the nonlinear 
wave equation \eqref{eqn:NLWE} 
with initial conditions $u(x,0)=e^{-\sin(2x)}(1+\cos(x))$ (first row), 
$u(x,0)=e^{-\sin(2x)}(1+\cos(5x))$(second row), and 
$u(x,0)=e^{-\sin(2x)}(1+\cos(9x))$ (third row). 
We set the group velocity $\alpha$ to 
$(2\pi/100)^2$ and consider different nonlinear 
interaction terms: $G=0$ (first column -- linear waves),  
$G=\beta u_x^4/4$ with $\beta=(2\pi/100)^4$ 
(second column -- nonlinear waves). It is seen that as the initial 
condition becomes rougher, the nonlinear effects 
become more important. }
\label{fig:NLW_flow}
\end{figure}

\subsection{Linear waves}
\label{sec:linear_waves}
Setting the interaction term $G(p, u_x,u)$ 
in \eqref{Hamiltonian_NLW} and \eqref{eqn:NLWE}
equal to zero yields the well-known linear wave equation
\begin{align}
\label{eqn:linear_wave}
u_{tt}=\alpha u_{xx}.
\end{align}
We discretize \eqref{eqn:linear_wave} in space 
using second-order finite differences on the (periodic) grid 
$x_j=2\pi j/N$ ($j=0,\dots, N$).
This yields the following linear dynamical 
system
\begin{equation}
\frac{d u_j}{dt}=p_j, \qquad \frac{d p_j}{dt}=\frac{\alpha}{h^2}(u_{j+1}-2u_j+u_{j-1}),
\label{linwave}
\end{equation}
where $u_j(t)=u(x_j,t)$, $p_j(t)=p(x_j,t)$, 
and $h=2\pi/N$ is the  mesh size.
The Hamilton's function corresponding to the 
finite-difference scheme \eqref{linwave} is obtained by 
discretizing the integral \eqref{Hamiltonian_NLW}, e.g., with 
the rectangle rule. This yields  
\begin{align}\label{lattice_H1}
\H_1(\bm p,\bm u)=\sum_{j=0}^{N-1}\frac{h}{2}p_j^2+
\frac{\alpha_1h }{2}\sum_{j=0}^{N-1}(u_{j+1}-u_j)^2,
\end{align}
where we defined $\alpha_1=\alpha/h^2$. 
The corresponding finite-dimensional 
Gibbs distribution can be written as  
\begin{align}\label{Gibbs_1}
\rho_{eq}(\bm p,\bm u) = 
\frac{1}{Z_1(\alpha_1,\gamma)}
\exp\left\{-\gamma\left(\sum_{j=0}^{N-1}\frac{1}{2}p_j^2+
\frac{\alpha_1}{2}\sum_{j=0}^{N-1}(u_{j+1}-u_j)^2\right)\right\},
\end{align}
$Z_1(\alpha_1,\gamma)$ being the partition function 
(normalization constant). Note that we absorbed the 
scaling factor $h$ in the parameter $\gamma > 0$. 
It is straightforward to verify that the lattice 
Hamiltonian \eqref{lattice_H1} is 
preserved if $u_0=u_N$ and $p_0=p_{N}$ 
(periodic boundary conditions). 
This implies that the PDF \eqref{Gibbs_1} 
is invariant under the flow generated by the linear 
ODE \eqref{linwave}. 
Note that the lattice Hamiltonian \eqref{lattice_H1} 
coincides with the Hamiltonian of a one-dimensional 
chain of harmonic oscillators with uniform mass $m=1$ 
and spring constants $k=\alpha_1$. We set $N=100$ 
and $\alpha=(2\pi/100)^2$ in equation \eqref{linwave}. In this 
way, the system \eqref{linwave} is $200$-dimensional and 
the modeling parameter $\alpha_1$ in \eqref{lattice_H1}-\eqref{Gibbs_1} 
is equal to $1$. 

\begin{figure}[t!]
\centerline{\hspace{0.4cm}
${G=0}$\hspace{5.8cm}
${G=\beta u_x^4/4}$
}
\centerline{
\includegraphics[height=5.5cm]{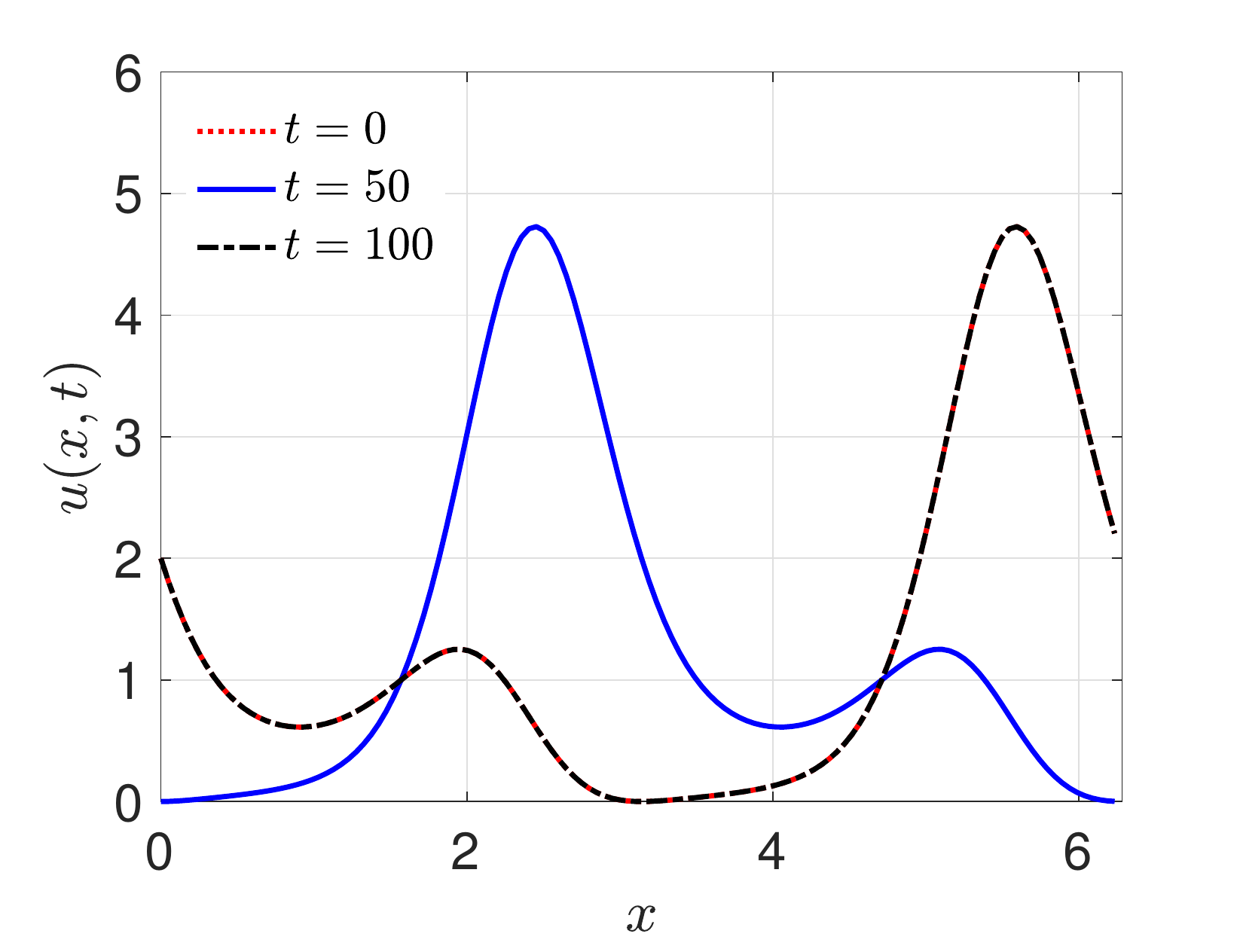} 
\includegraphics[height=5.5cm]{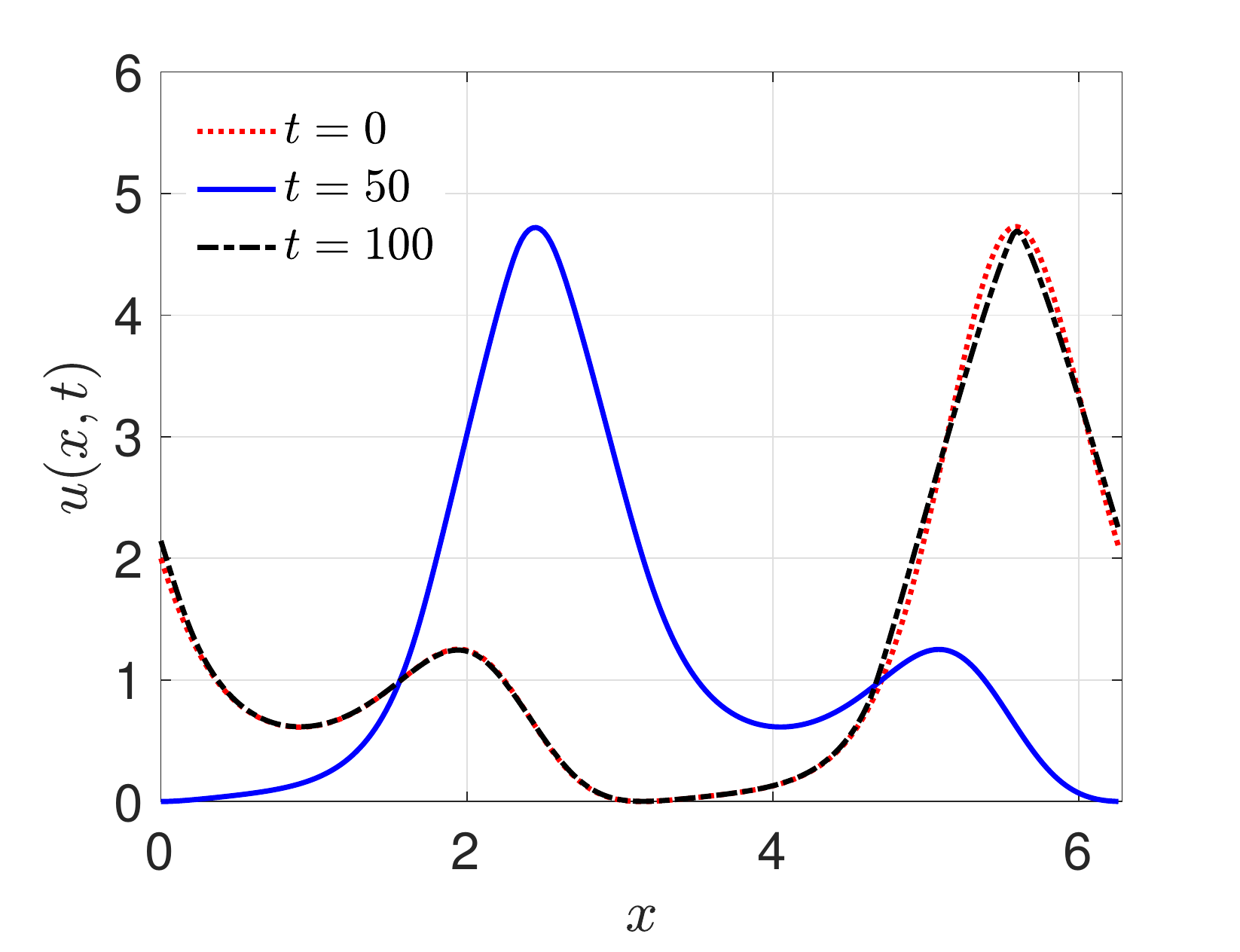}
}
\centerline{
\includegraphics[height=5.5cm]{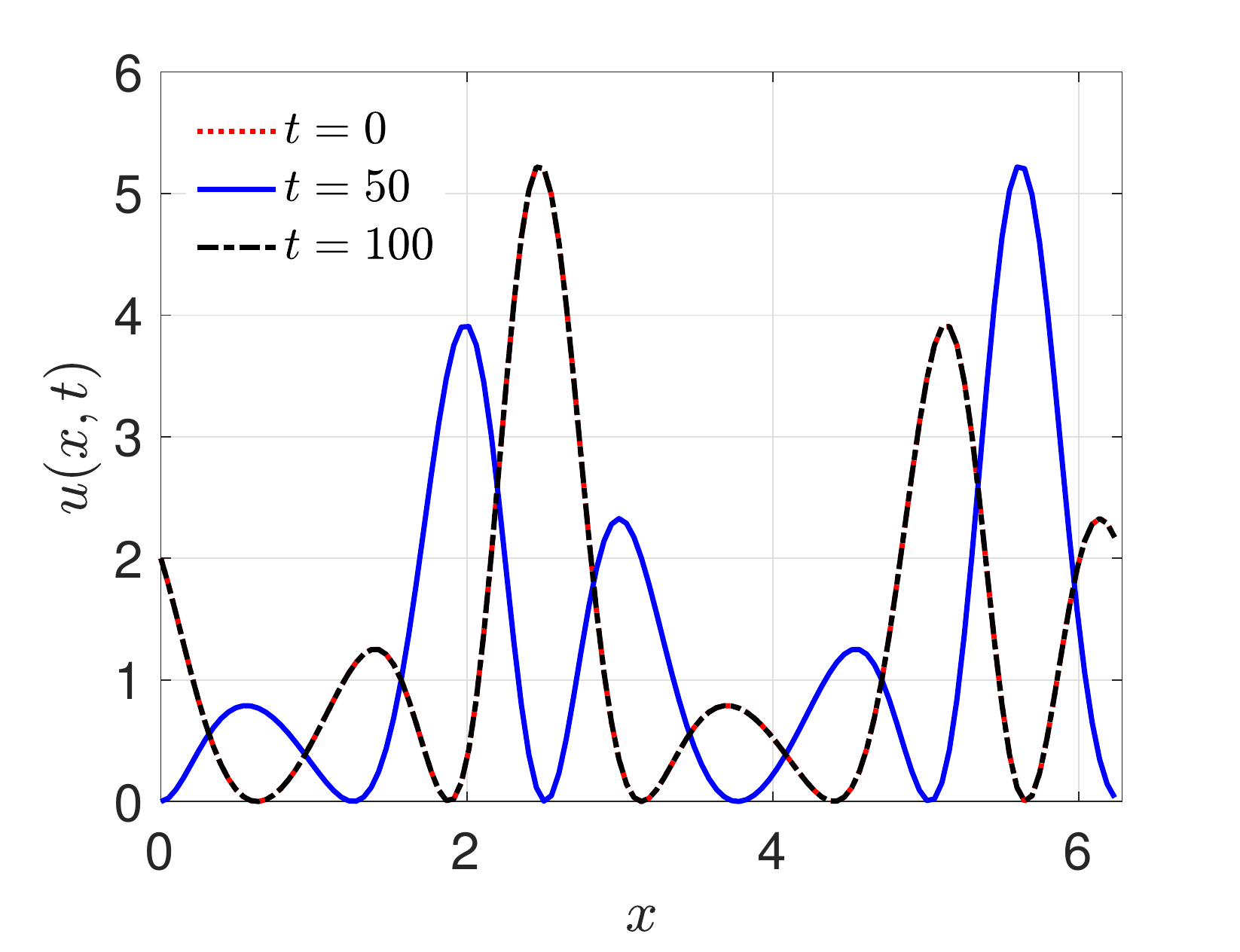}
\includegraphics[height=5.5cm]{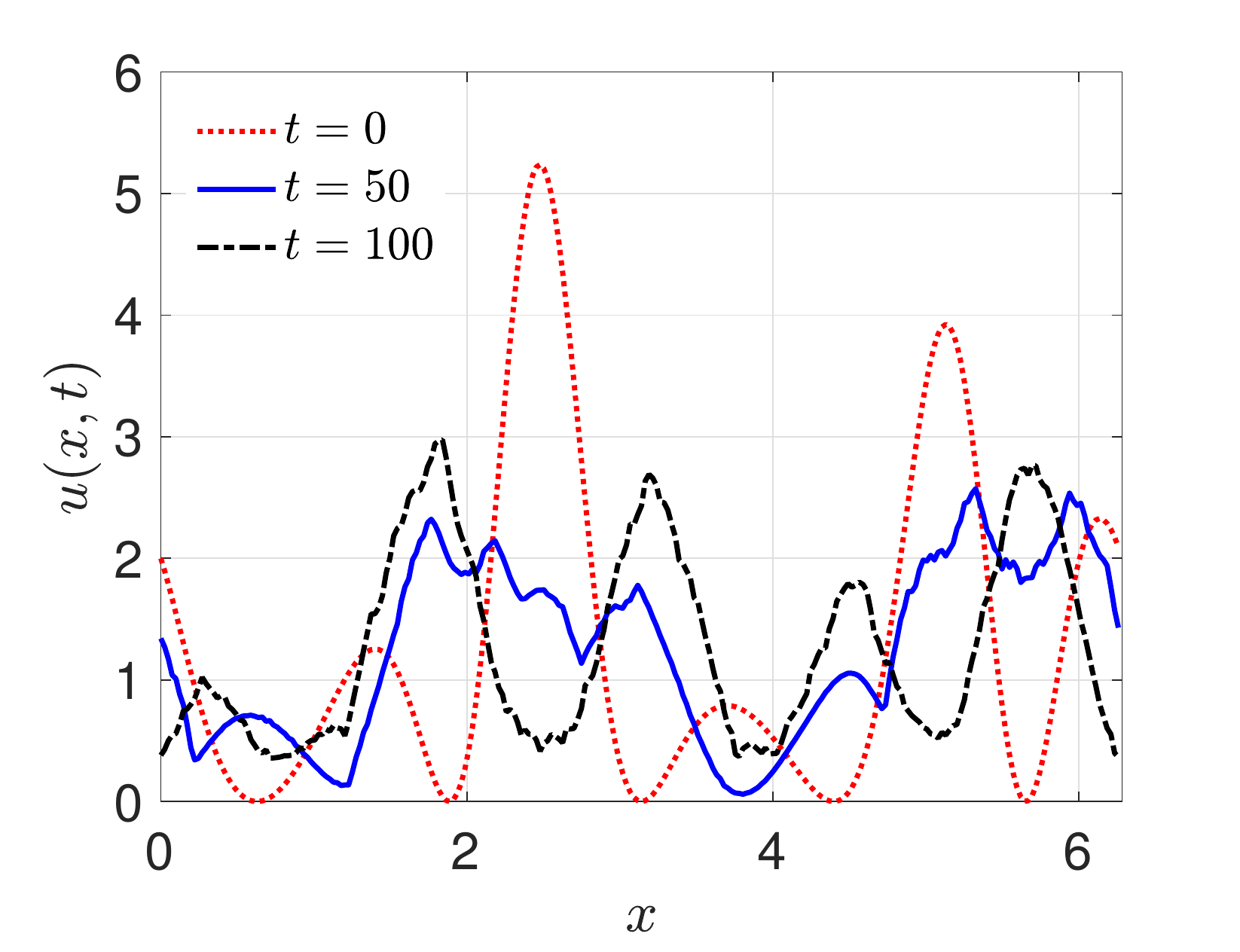}
}
\centerline{
\includegraphics[height=5.5cm]{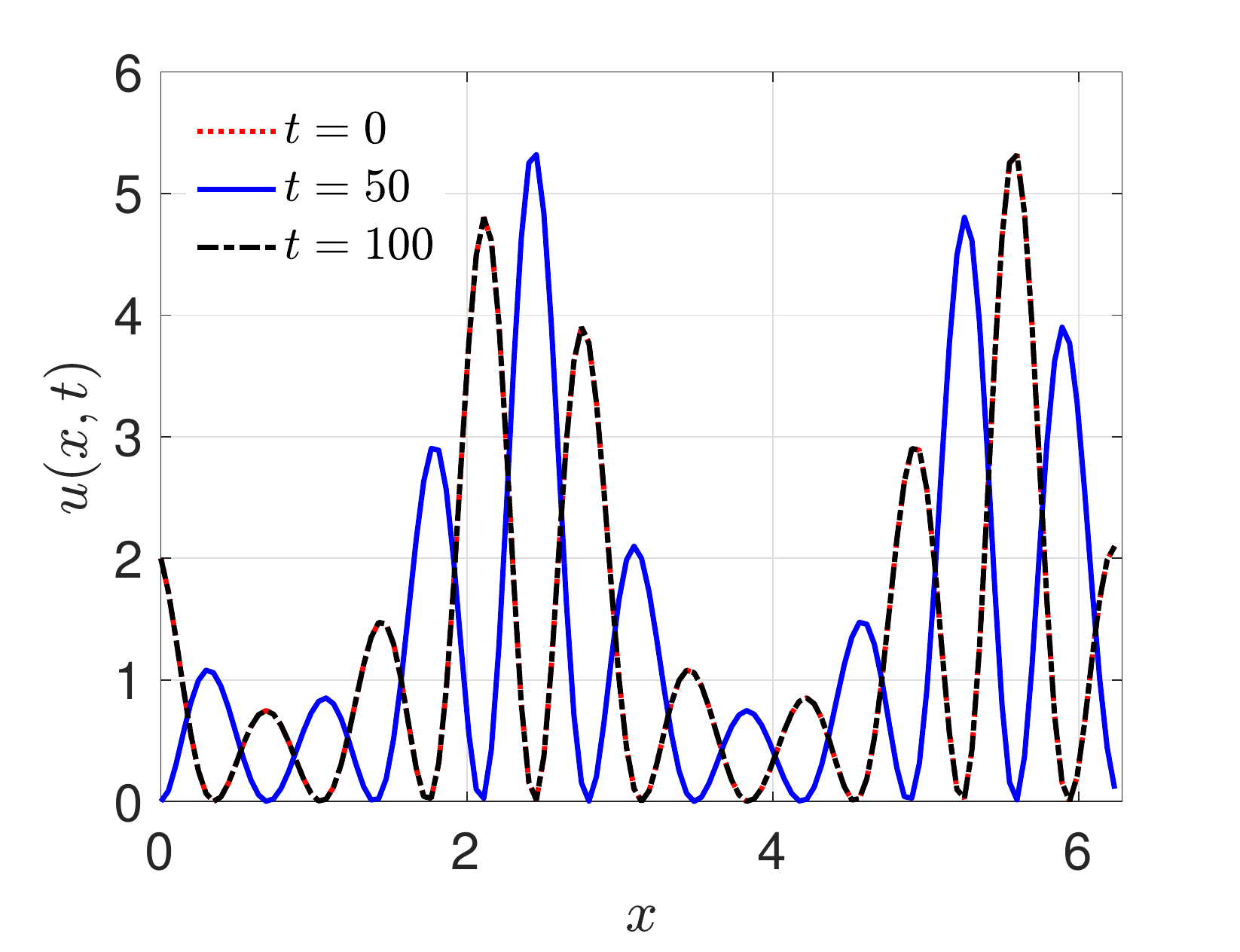}
\includegraphics[height=5.5cm]{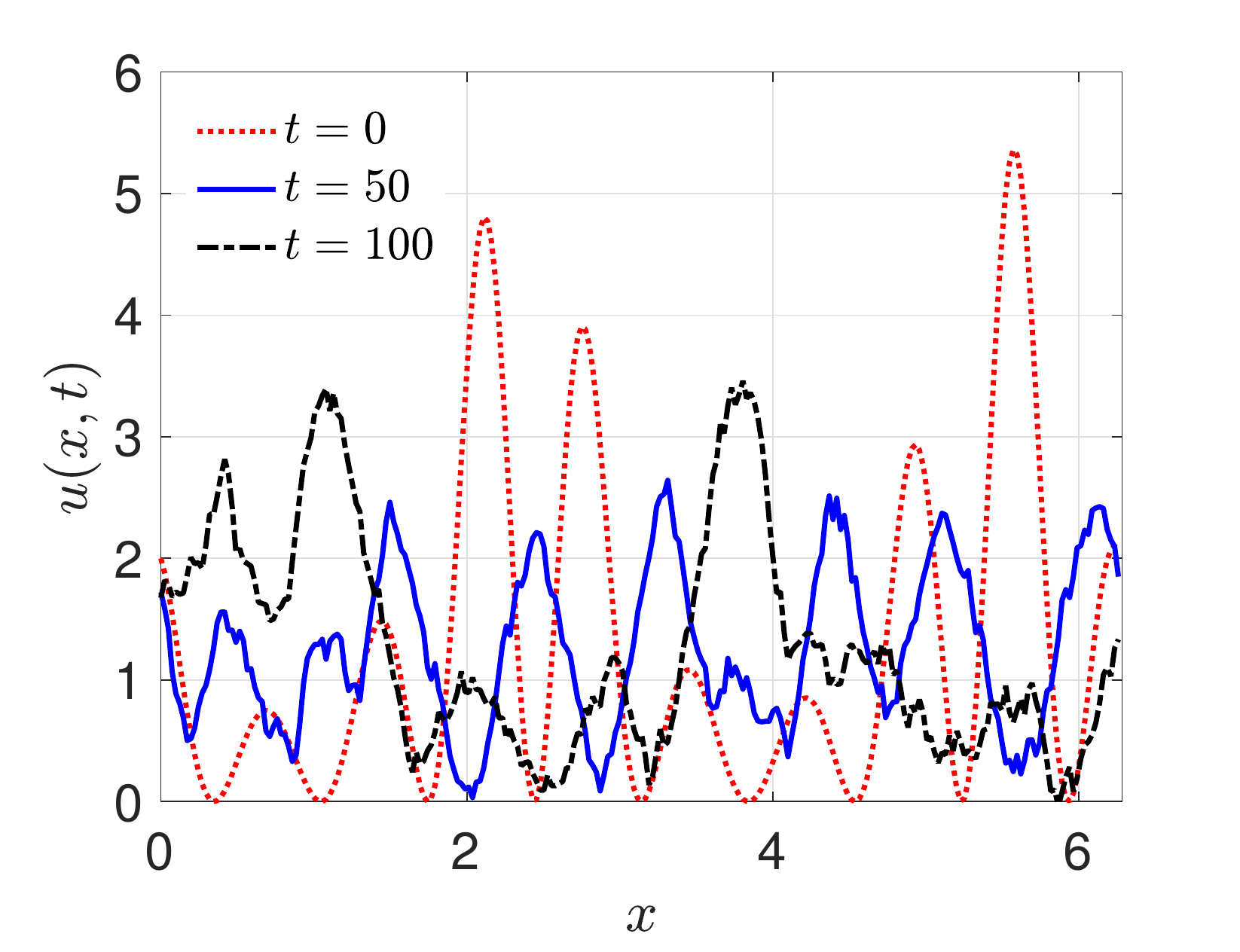}
} 
\caption{Snapshots of the solution shown in Figure \ref{fig:NLW_flow}.}
\label{fig:NLW_path}
\end{figure}

\paragraph{MZ memory kernel and auto-correlation functions} 

The Hamiltonian system \eqref{linwave} with 
periodic boundary conditions has many symmetries. 
In particular, the statistical properties of wave displacement 
$u(x,t)$ at any point $x_j$ are the same, if the 
initial state is distributed according to \eqref{Gibbs_1}. 
In addition, the PDF of the 
wave momentum\footnote{Note that for linear waves 
the wave momentum $p(x,t)$ is equal to 
$\partial u(x,t)/\partial t$ (see Eq. \eqref{eq:wavesystem}).} 
$p(x_j,t)$ and the wave displacement $r(x_j,t)=u(x_{j+1},t)-u(x_j,t)$ 
are both Gaussian (see Eq. \eqref{Gibbs_1}).
Suppose we are interested in the 
temporal auto-correlation function of the  wave 
momentum $p(x_j,t)=p_j$, at an arbitrary location 
$x_j$, i.e., 
\begin{equation}
C_{p_j}(t) = \langle p_j(t),p_j(0)\rangle_{eq},
\label{autoC}
\end{equation}
where $\langle ,\rangle_{eq}$ is an integral over 
the equilibrium distribution \eqref{Gibbs_1}.
Such correlation function admits the analytical expression (see \cite{florencio1985exact}) 
\begin{align}\label{ana}
C_{p_j}(t)= J_0(2t), \qquad \forall \gamma>0,
\end{align}
where $J_0$ is the zero-order Bessel function 
of the first kind. With  $C_{p_j}(t)$ available, we 
can solve the MZ equation 
\begin{align}
\label{gleCC}
\frac{d}{dt}C_{p_j}(t) =  \int_{0}^{t} K(t-s) C_{p_j}(s)ds
\end{align}
for the memory kernel $K(t)$ by using Laplace transforms. 
This yields the exact MZ kernel
\begin{equation}
K(t) = \frac{J_1(2t)}{t},\qquad \forall \gamma>0,
\label{exactK}
\end{equation}
where $J_1$ is the first-order Bessel function of the 
first kind. 
In Figure \ref{fig:correlation_k}, we compare the 
exact memory kernel \eqref{exactK} and the correlation function 
\eqref{gleCC} with the results we obtained using the 
iterative algorithm discussed in Section \ref{sec:iterativealgorithm}. 
Note that the system \eqref{linwave} is linear. 
Therefore, we can use the formula 
\eqref{linear_gamman} to compute the coefficients 
$\{\gamma_1,\dots,\gamma_{n+2}\}$. 
With such coefficients available, 
we then compute $\{\mu_1,\dots,\mu_{n+2}\}$ 
using the recurrence relation \eqref{iterative}, 
and the MZ memory kernel \eqref{Mq}. 
 {
In Figure \eqref{fig:correlation_k} we demonstrate 
that the MZ-Faber expansion rapidly converges to 
the exact auto-correlation function \eqref{autoC} of the 
wave momentum as we increase the  Faber 
expansion order $n$. This is not surprising since 
the linear wave equation is a well-known 
integrable system for which convergence of 
the MZ-Faber series can be rigorously 
established (\S 5 in \cite{zhu2018faber}).}
\begin{figure}
\centerline{
\includegraphics[height=5.0cm]{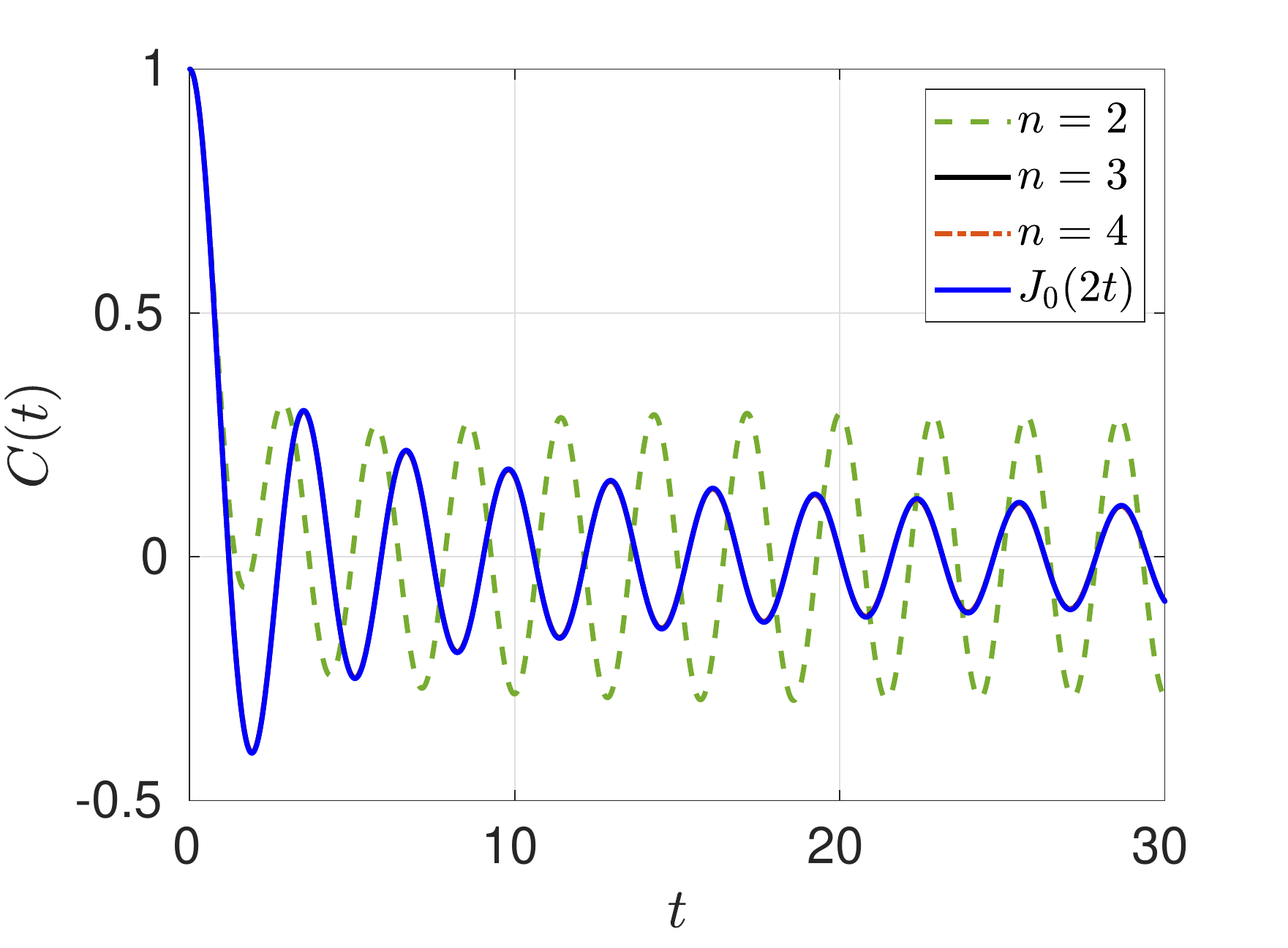} 
\includegraphics[height=5.0cm]{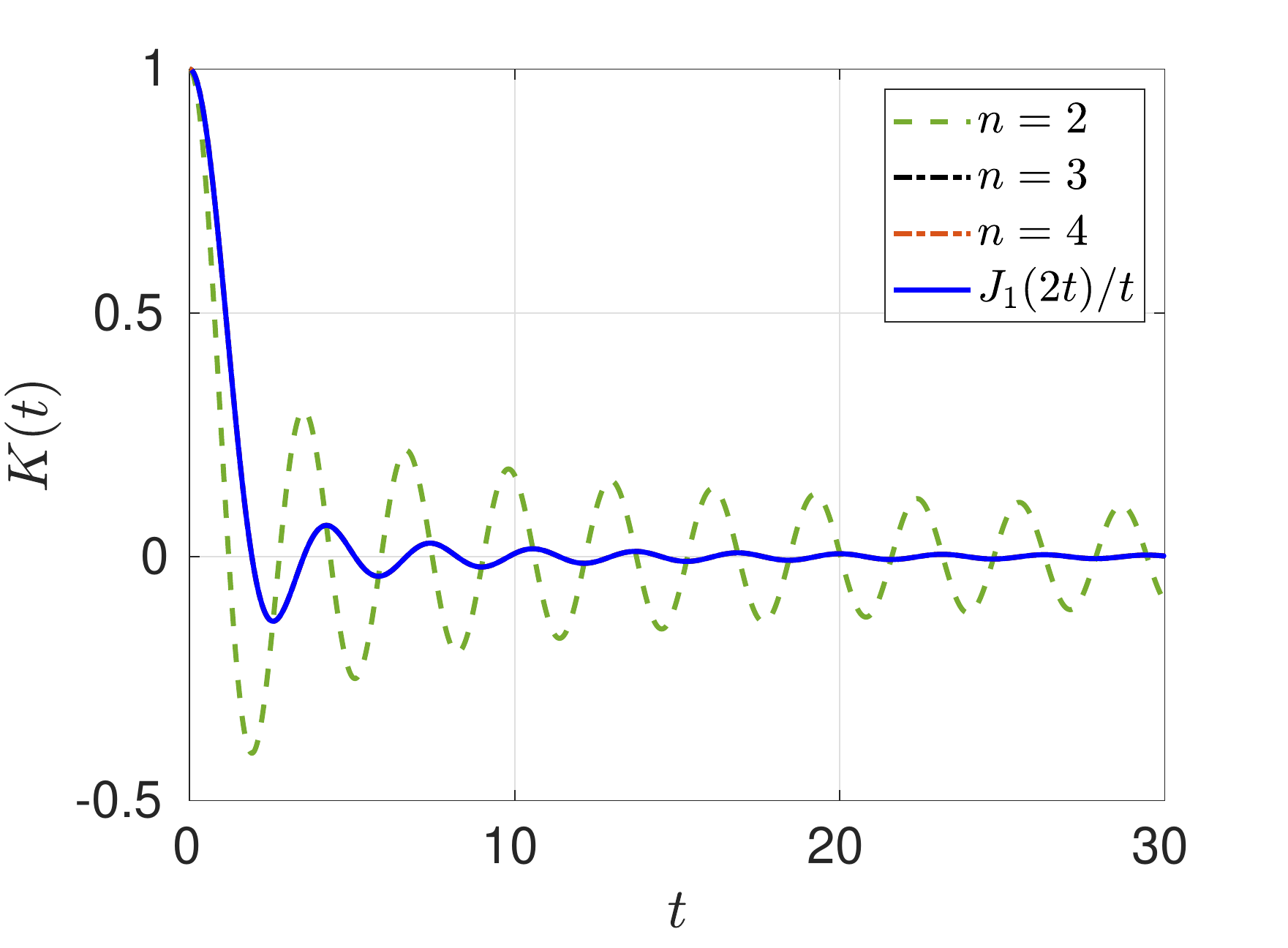}
}
\caption{{  Linear wave equation \eqref{eqn:linear_wave}. 
Temporal auto-correlation function  of the 
wave momentum $p(x_j,t)={\partial u(x_j,t)}/\partial t$ 
(Eq. \eqref{autoC}, any location $x_j$) and MZ memory 
kernel $K(t)$. We compare the the analytical 
results \eqref{ana} and \eqref{exactK}, with results 
we obtained by using the recursive 
algorithm we presented in Section \ref{sec:recursive} 
for different Faber polynomial orders $n$. It is seen that 
the MZ-Faber expansion rapidly converges to 
the exact MZ-kernel and auto-correlation function we 
increase the polynomial order.}}
\label{fig:correlation_k} 
\end{figure}

\paragraph{Reduced-order stochastic modeling}
Suppose we are interested in building a consistent 
reduced-order stochastic model for the wave momentum 
$p(x_j,t)=\partial u(x_j,t)/\partial t$ at 
statistical equilibrium. To this end, we employ the spectral 
expansion technique we discussed in Section \ref{sec:Model}. 
The auto-correlation function of the process 
$p(t)=p(x_j,t)$ (at any location $x_j$), i.e., 
\eqref{autoC}, is obtained by solving the 
MZ equation \eqref{gleCC} with the kernel 
computed using the combinatorial algorithm described 
in Section \ref{sec:iterativealgorithm}.
 {Following the stochastic modeling paradigm 
we developed in Section \ref{sec:Model}}, we expand $p(t)$ as 
\begin{equation}
\label{KLP}
p(t)\simeq \sum_{k=1}^{K}\sqrt{\lambda_k}\xi_k(\omega)e_k(t),
\end{equation}
where $(\lambda_k,e_k(t))$ are eigenvalues and eigenfunctions 
of \eqref{autoC}. 
By enforcing consistency of \eqref{KLP} with 
the equilibrium distribution \eqref{Gibbs_1} at each fixed 
time we obtain that the random variables $p(t_j)$ 
are normally distributed with zero mean and 
variance $1/\gamma$, for all $t_j\in [0,10]$. 
In other words $p(t)$ is a centered, stationary 
Gaussian random process with 
correlation function \eqref{autoC}. 
In Figure \ref{fig:C_Harmonic}, we plot 
the auto-correlation functions 
\begin{align}\label{time_c}
C_p(t)=\langle p_j(t),p_j(0)\rangle_{eq},\qquad
C_p^2(t)=\langle p^2_j(t),p^2_j(0)\rangle_{eq},\qquad
C_p^4(t)=\langle p^4_j(t),p^4_j(0)\rangle_{eq},
\end{align} 
we obtained with an MZ-Faber expansion of 
degree $n =6$. Convergence of KL expansions representing 
high-order correlation functions such as \eqref{time_c}
is established in Appendix.
\begin{figure}
\centering
\includegraphics[height=3.9cm]{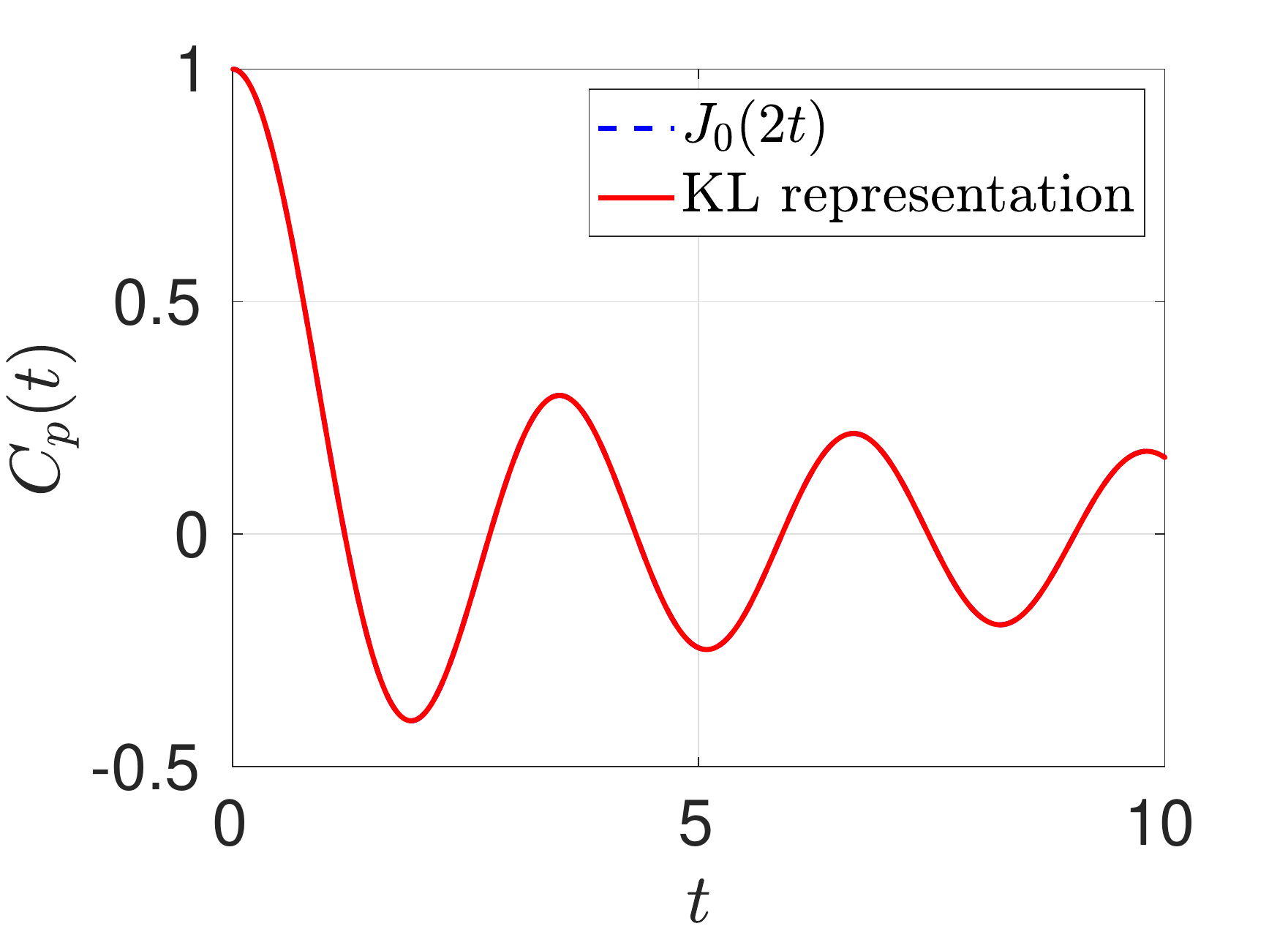} 
\includegraphics[height=3.9cm]{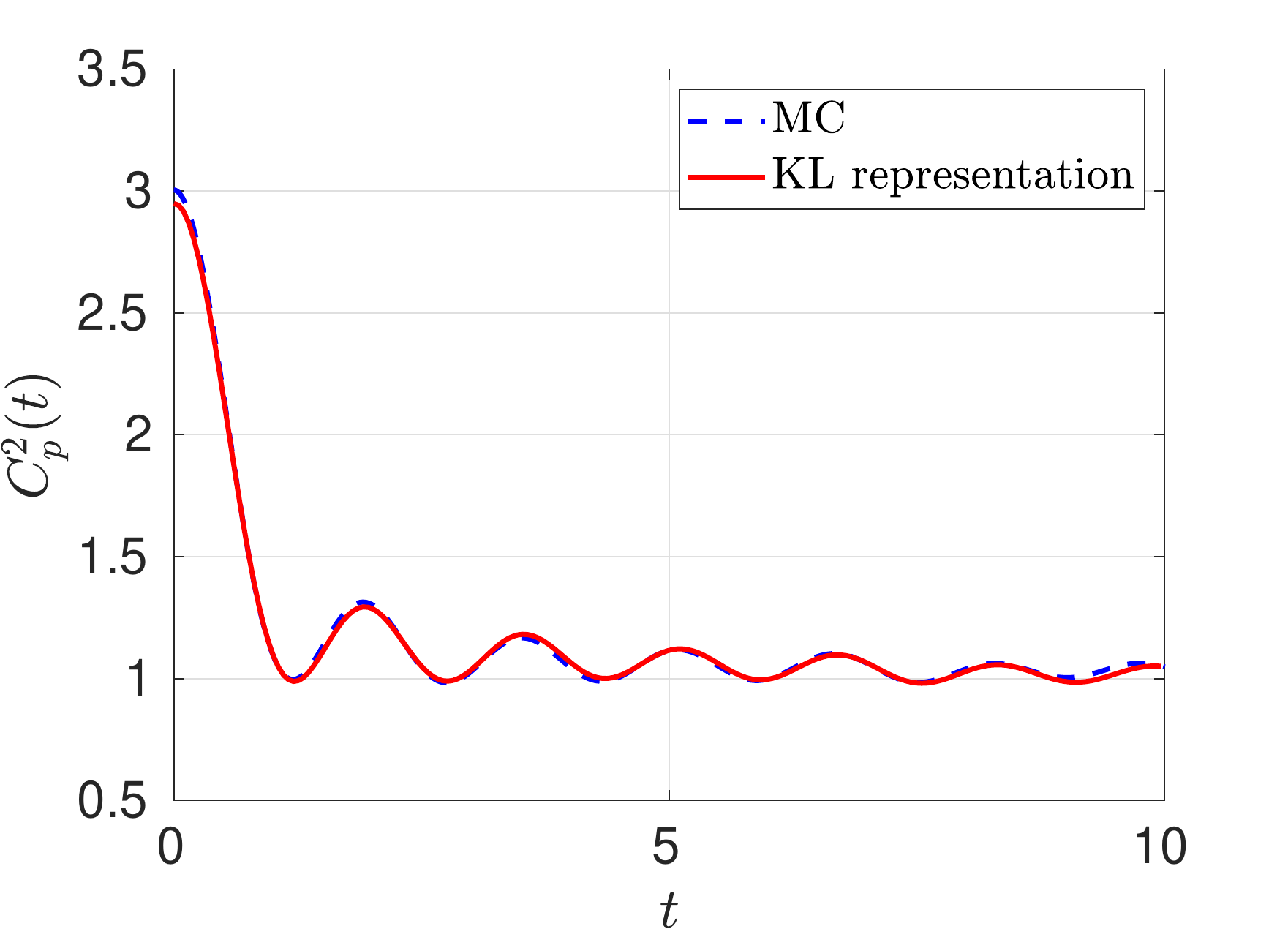}
\includegraphics[height=3.9cm]{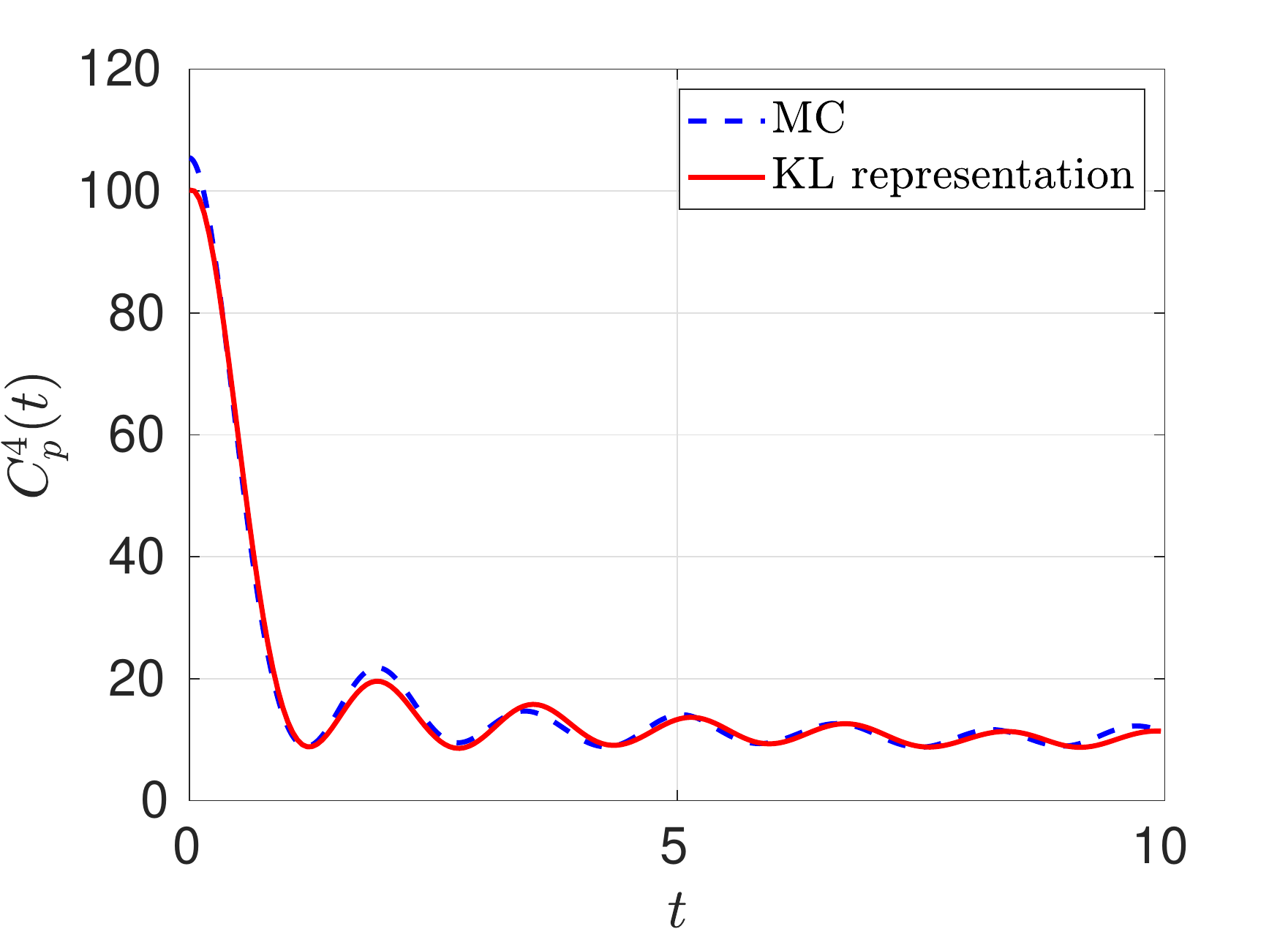}
\caption{Linear wave equation \eqref{eqn:linear_wave}. 
Temporal auto-correlation functions \eqref{time_c} 
of the wave momentum. {  The MZ kernel 
here is approximated  with a Faber polynomial series of 
degree $n=10$.}} 
\label{fig:C_Harmonic}
\end{figure}

\subsection{Nonlinear waves}
\label{sec:FPU}
Here we study the nonlinear wave equation 
\eqref{eqn:NLWE} with interaction term
$G(p,u_x,u)=\beta u_x^4/4$, i.e., 
\begin{align}\label{eqn:nonlinear_wave1}
u_{tt}=\alpha u_{xx}+3\beta u_x^2u_{xx},\qquad \alpha,\beta>0.
\end{align}
In Figure \ref{fig:NLW_flow} and Figure \ref{fig:NLW_path} 
we plot sample solutions of \eqref{eqn:nonlinear_wave1} 
corresponding to different initial conditions. 
It is clearly seen that the nonlinearity 
$u_x^2u_{xx}$ breaks the periodicity of 
traveling wave. This effect is more pronounced  if 
the initial condition is rougher in $x$, as $u^2_x$ and $u_{xx}$
are larger in this case, thereby increasing magnitude of the nonlinear 
term in \eqref{eqn:nonlinear_wave1}. As before, 
we discretize \eqref{eqn:nonlinear_wave1} 
and the Hamiltonian \eqref{Hamiltonian_NLW} with finite 
differences on a periodic spatial grid ($N$ 
points in $[0,2\pi]$). This yields
\begin{align}\label{lattice_H2}
\H_2(\bm p,\bm u)=
\sum_{j=0}^{N-1}\frac{hp_j^2}{2}+\sum_{j=0}^{N-1}
\frac{h\alpha_1}{2}(u_{j+1}-u_j)^2+\sum_{j=0}^{N-1}\frac{h\beta_1}{4}(u_{j+1}-u_j)^4,
\end{align}
where $u_j(t)=u(x_j,t)$ and $p_j(t)=\partial u(x_j,t)/\partial t$ 
represent the wave amplitude and momentum at location $x_j=h j$ ($j=0,\dots, N$,  $h=2\pi/N$), 
$\alpha_1=\alpha/h^2$ and $\beta_1=\beta/h^4$. 
The discretized equilibrium distribution 
\eqref{equlibrium} then becomes
\begin{align}\label{Gibbs_2}
\rho_{eq}(\bm p,\bm u)=\frac{1}{Z_2(\alpha_1,\beta_1,\gamma)}
\exp\left\{-\gamma\left(\sum_{j=0}^{N-1}\frac{p_j^2}{2}+
\sum_{j=0}^{N-1}\frac{\alpha_1}{2}(u_{j+1}-u_j)^2+
\sum_{j=0}^{N-1}\frac{\beta_1}{4}(u_{j+1}-u_j)^4\right)\right\}.
\end{align}
As before,  we absorbed the factor $h$ into the 
parameter $\gamma$. Note that the lattice 
Hamiltonian \eqref{lattice_H2} coincides 
with the Hamiltonian of the Fermi-Pasta-Ulam  
$\beta$-model \eqref{FPU_H}, with $m_j=1$. 
We emphasize that if a different scheme is used to 
discretize the wave equation \eqref{eqn:nonlinear_wave1}, 
then the lattice Hamiltonian \eqref{lattice_H2} 
may not be a conserved quantity.

\paragraph{MZ memory term and auto-correlation functions} 
We choose the wave momentum $p_j(t)$ and 
the wave displacement $r_j(t)=u_{j+1}(t)-u_j(t)$ as quantities 
of interest.  {Moreover, we set $N=100$ and $\alpha=(2\pi/100)^2$. To study the effects of the nonlinear interaction 
term, we consider different values of $\beta=\beta_1\alpha^2$, 
with $\beta_1$ ranging from $0.01$ to $1$. This corresponds 
to the FPU models with mild and strong 
nonlinearities, respectively.
Based on the structure of the Hamiltonian 
\eqref{lattice_H2} and the equilibrium distribution \eqref{Gibbs_2}, 
we expect that the dynamics of $p_j(t)$ and $r_j(t)$ will 
be different for different parameters $\beta$.}
To calculate the temporal auto-correlation 
function of $p_j(t)$ and $r_j(t)$ at an arbitrary 
spatial point $x_j$, we solve the corresponding 
MZ equations. Such equations are of the form \eqref{gleCC}, 
where the memory kernel $K(t-s)$ is computed from 
first-principles (i.e., from the microscopic equations of 
motion) using the algorithm we presented in 
Section \ref{sec:iterativealgorithm}. 
In Figure \ref{fig:FPU_C(t)}, we compare the temporal 
auto-correlation function we obtained for the wave 
displacement $r_j(t)$ with results of 
Markov-Chain-Monte-Carlo (MCMC)  
($10^6$ sample paths)  {for FPU systems 
with mild nonlinearities ($\beta_1=0.01$ and $\beta_1=0.1$ ) 
at different temperatures ($\gamma=1$ and $\gamma=40$).
It is seen that the MZ-Faber approximation of the MZ memory 
kernel yields relatively accurate results for FPU systems 
with mild nonlinearties at both low ($\gamma=40$) 
and high temperature ($\gamma=1$) as we increase 
the polynomial order $n$. }
\textbf{\begin{figure}[ht]
\centerline{\hspace{0.4cm}
${\beta_1=0.01}$\hspace{6cm}
${\beta_1=0.1}$
}
\centerline{
\includegraphics[height=5.5cm]{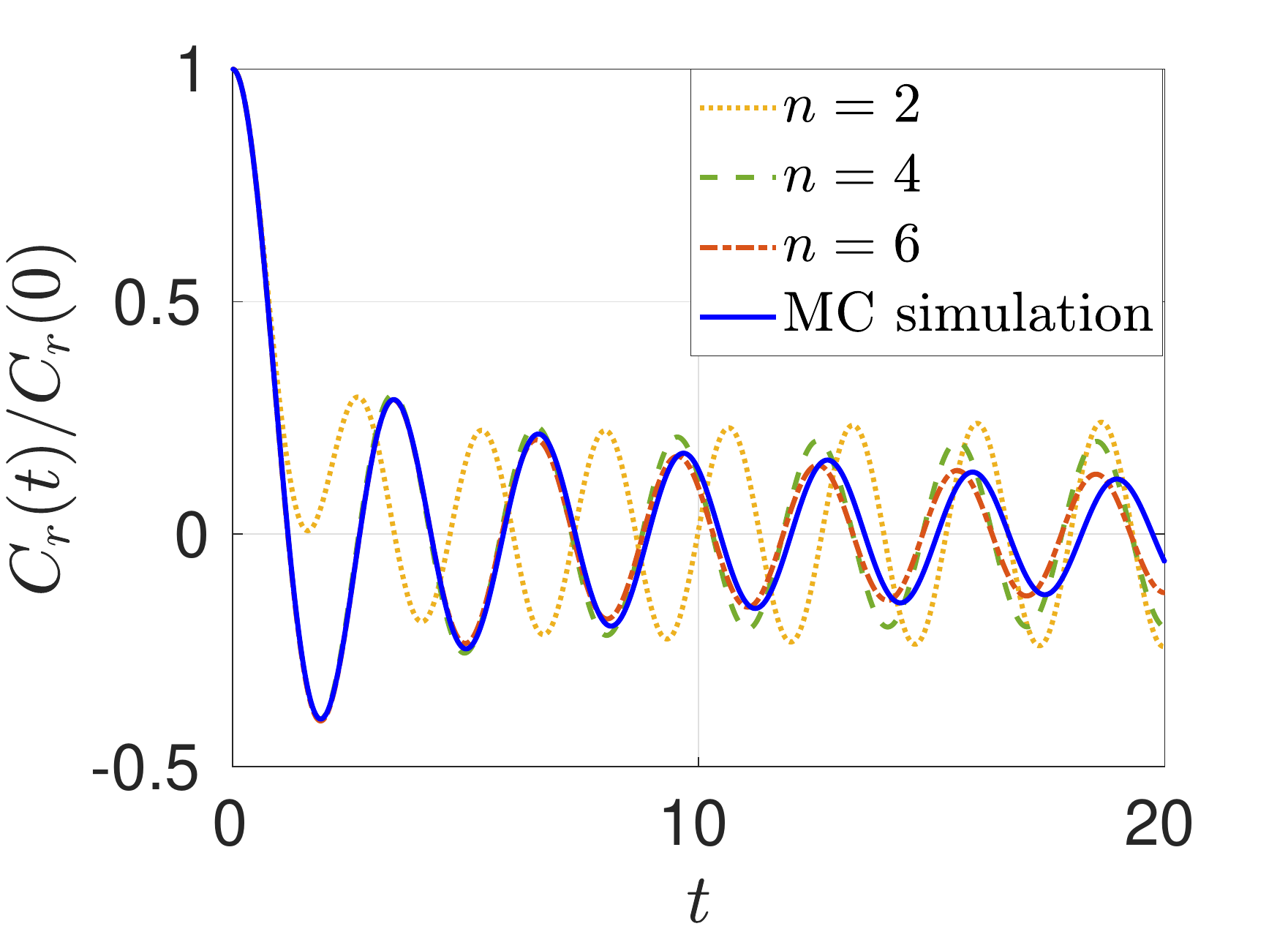} 
\includegraphics[height=5.5cm]{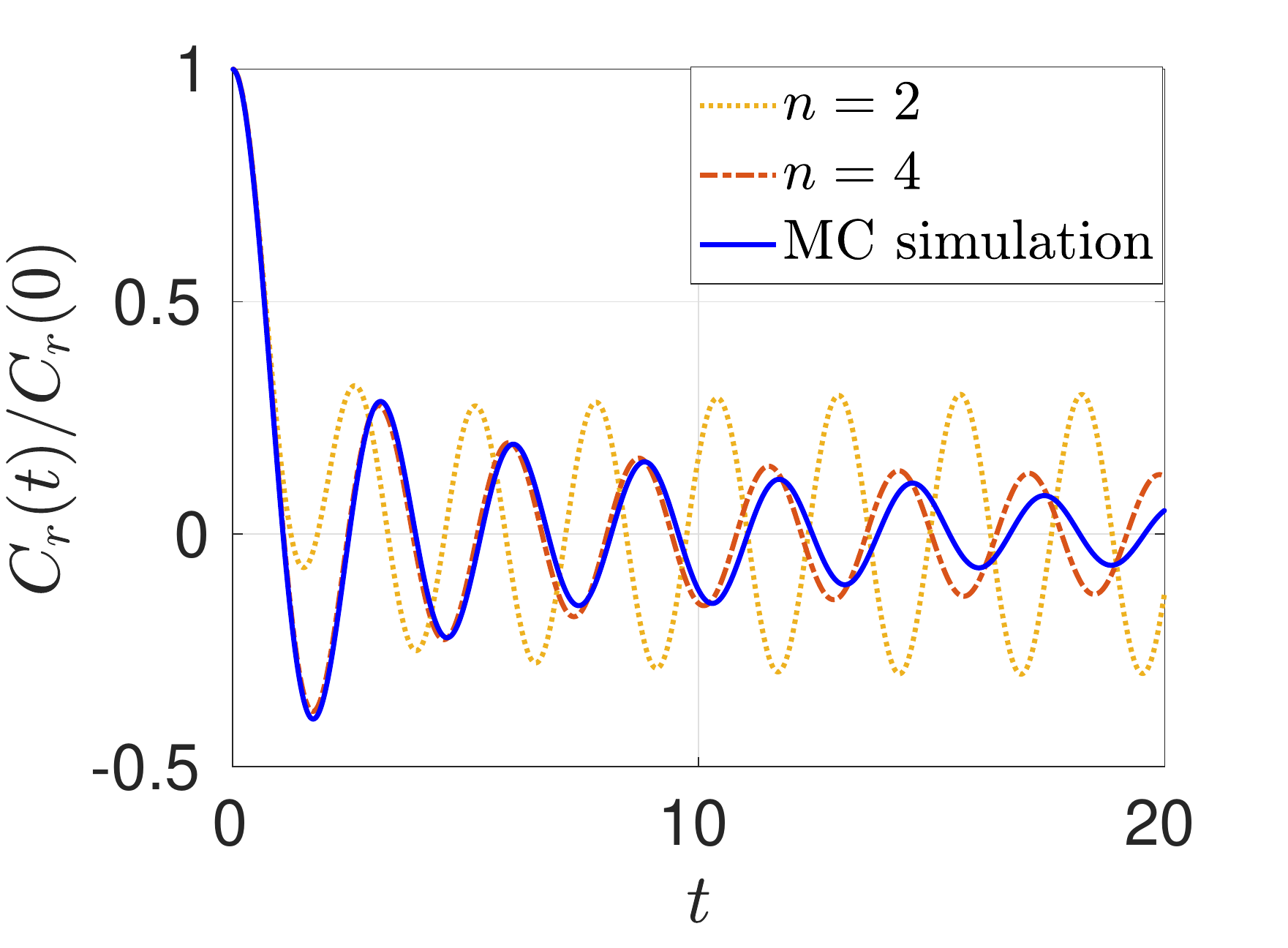}
}
\centerline{
\includegraphics[height=5.5cm]{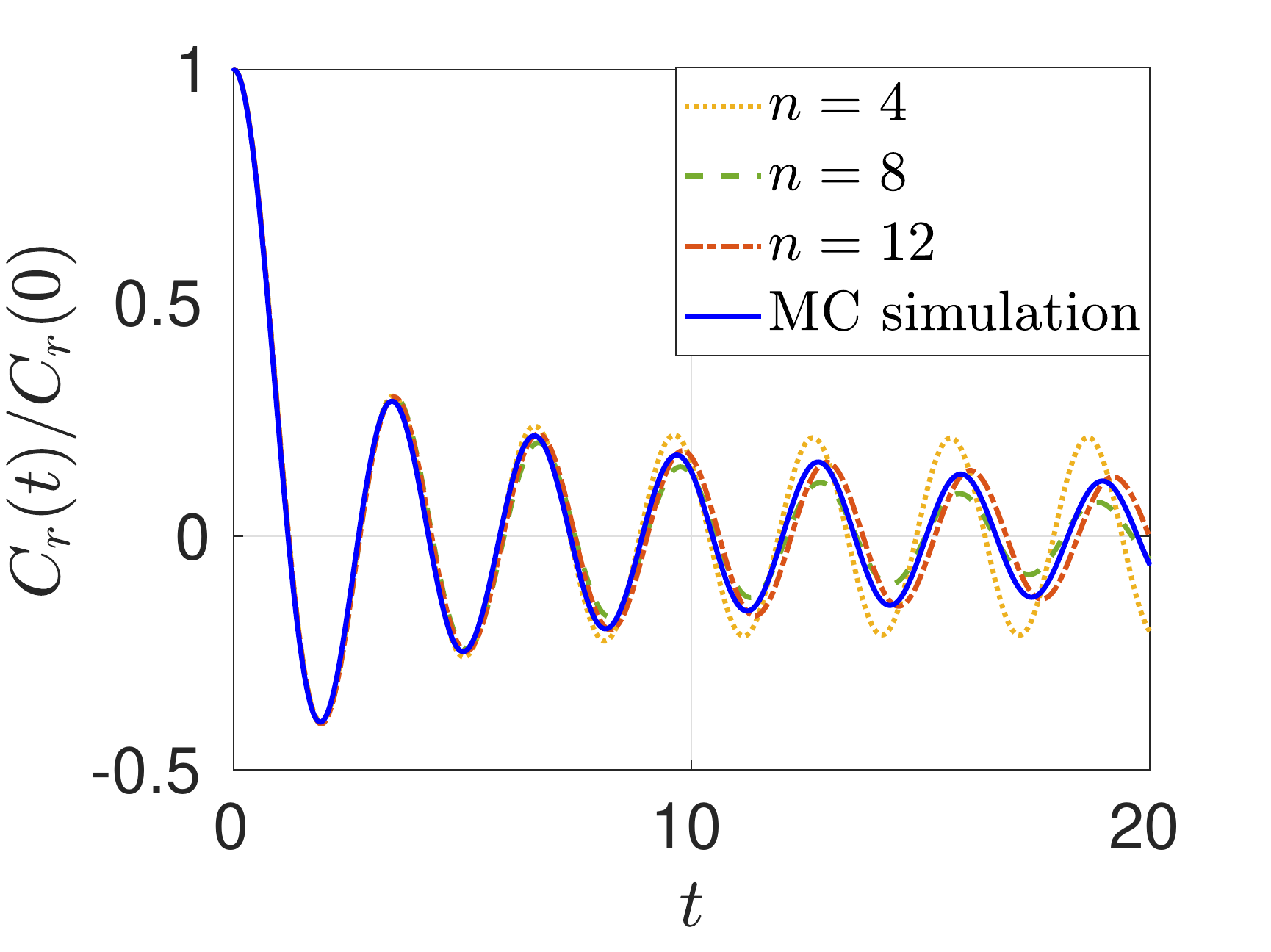}
\includegraphics[height=5.5cm]{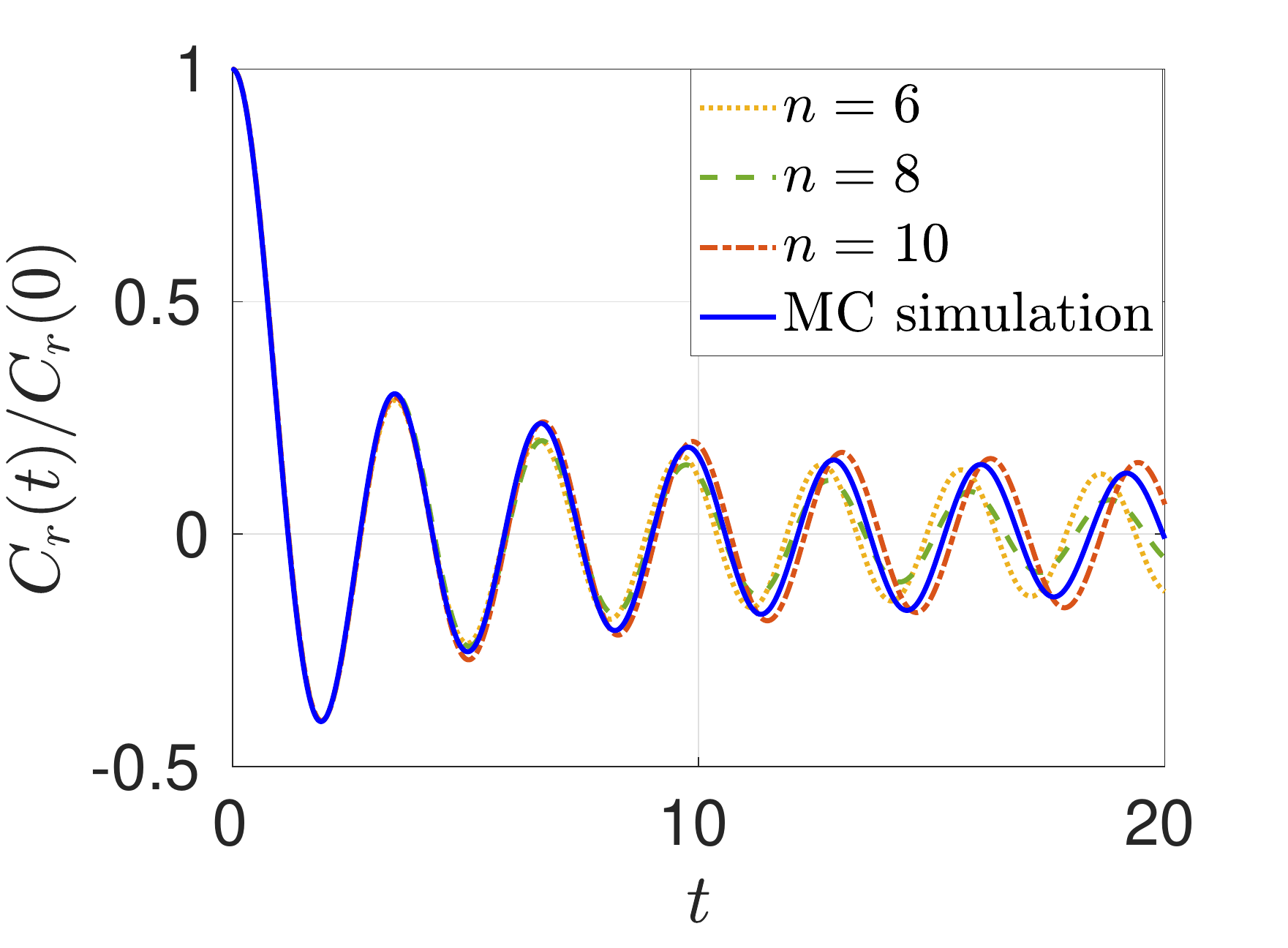}
}
\caption{
 {Nonlinear wave equation 
\eqref{eqn:nonlinear_wave1}. Temporal 
auto-correlation function of the wave displacement $r_j(t)$ 
for different values of the nonlinear parameter 
$\beta_1$. We compare results we obtained by calculating 
the MZ memory from first principles using $n$-th 
order Faber polynomials (Section \ref{sec:calculation_of_gamma}) 
with results from Markov-Chain-Monte-Carlo  
($10^6$ sample paths). The thermodynamic 
parameter $\gamma$ is set to 1 (high-temperature) 
in the first row and to 40 (low-temperature) 
in the second row.}}
\label{fig:FPU_C(t)}
\end{figure}
}

\paragraph{Reduced-order stochastic modeling} 
We employ the spectral approach 
of Section \ref{sec:Model} to 
build stochastic low-dimensional 
models of the wave momentum $p_j(t)$ and 
wave displacement $r_j(t)=u_{j+1}(t)-u_j(t)$ at 
statistical equilibrium.
Since we assumed that we are at statistical 
equilibrium, the statistical properties of the random 
processes representing $p_j(t)$ and $r_j(t)$ 
are time-independent. For instance, by integrating 
\eqref{Gibbs_2} we obtain the following expression for 
the one-time PDF of $r_j(t)$ 
\begin{equation}
r_j(t) \sim  e^{-\gamma(\frac{1}{2}\alpha_1 r^2+\frac{1}{4}\beta_1 r^4)}
\qquad \forall t\in[0,T],  \quad \forall j=0,\dots, N-1.
\label{PFD_r}
\end{equation}
Clearly, $r_j(t)$ is a stationary non-Gaussian process. 
To sample the KL expansion of $r_j(t)$ in a way 
that is consistent with the PDF 
\eqref{PFD_r} we used the algorithm discussed in 
\cite{phoon2002simulation,phoon2005simulation}.
 {For the FPU system with $\alpha_1=\beta_1=1$, it is straightforward to show that 
for all $m\in \mathbb{N}$}
\begin{align*}
\mathbb{E}\{r_j^{2m}(t)\}=\frac{\displaystyle\int_{-\infty}^{+\infty}r^{2m}e^{-\gamma(\frac{1}{2}r^2-\frac{1}{4}r^4)}dr}
{\displaystyle\int_{-\infty}^{+\infty}e^{-\gamma(\frac{1}{2}r^2-\frac{1}{4}r^4)}dr}
=\frac{\sqrt{2} \gamma^{-\frac{1}{4}-\frac{m}{2}}\Gamma\left(\frac{1}{2}+m\right)U\left(\frac{1}{4}+\frac{m}{2},\frac{1}{2},\frac{\gamma}{4}\right)}
{e^{\gamma/8} K_{1/4}\left(\frac{\gamma}{8}\right)},
\end{align*}
where $\Gamma(x)$ is the Gamma function, $K_n(z)$ is the modified Bessel 
function of the second kind and $U(x,y,z)$ is Tricomi's confluent 
hypergeometric function. Therefore, for all positive $\gamma$ 
and finite $m$ we have that 
$\mathbb{E}\{r_j^{2m}(t)\}<\infty$, i.e., $r_j(t)$ 
is $L^{2m}$ process. This condition guarantees 
convergence of the KL expansion to temporal correlation 
functions of order greater than two (see Appendix A).
In Figure \ref{fig:C_FPU} we plot the temporal 
auto-correlation function of various 
polynomial observables of the nonlinear 
wave momentum and displacement at an 
arbitrary spatial point $x_j$. We 
compare results we obtained from Markov Chain Monte Carlo 
simulation (dashed line), with the MZ-KL expansion method 
based the first-principle memory calculation (continuous line). 
We also provide results we obtained by using KL expansions 
with covariance kernel estimated from data (dotted line).

\begin{figure}[t]
\centering

\centerline{
\includegraphics[height=4.cm]{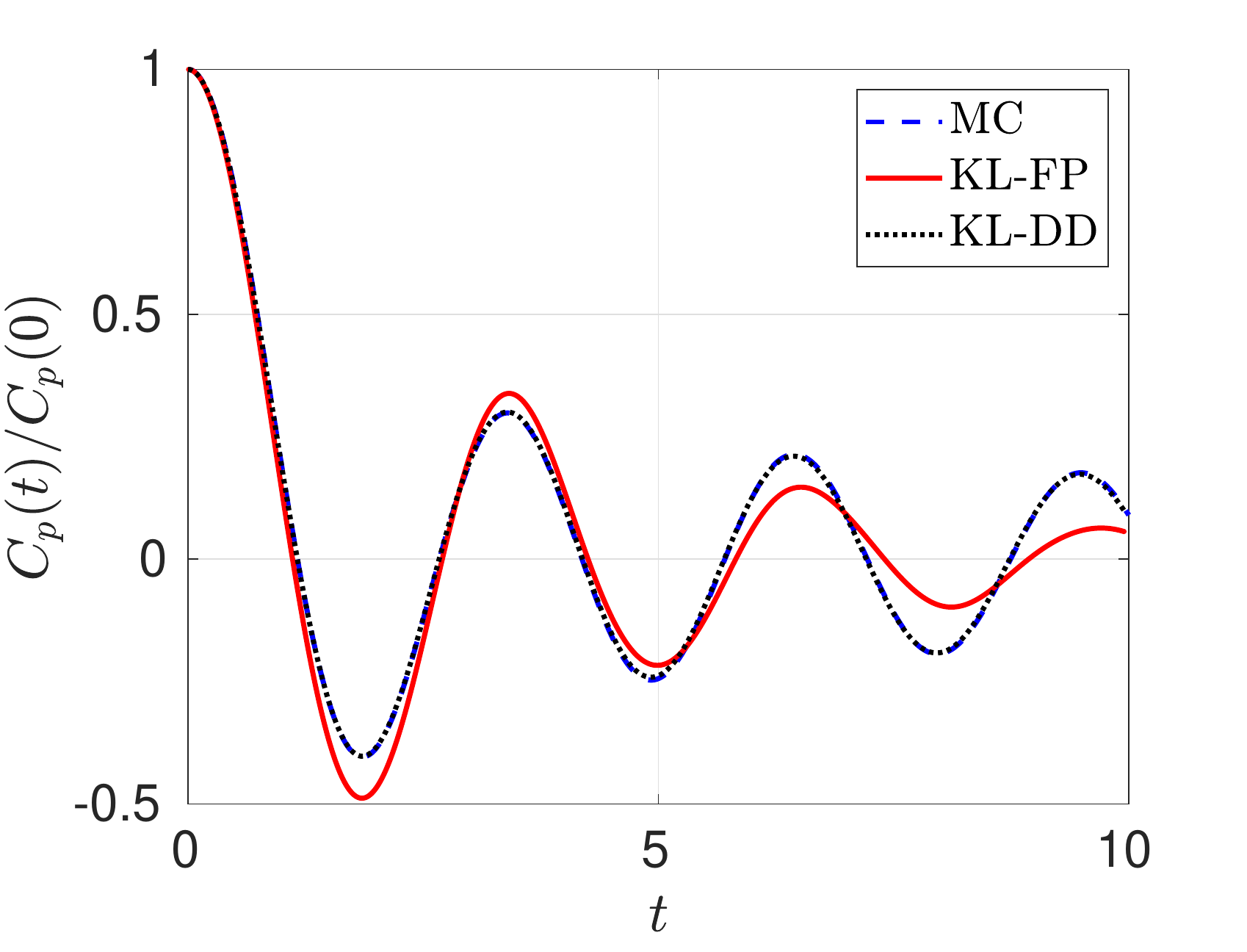} 
\includegraphics[height=4.cm]{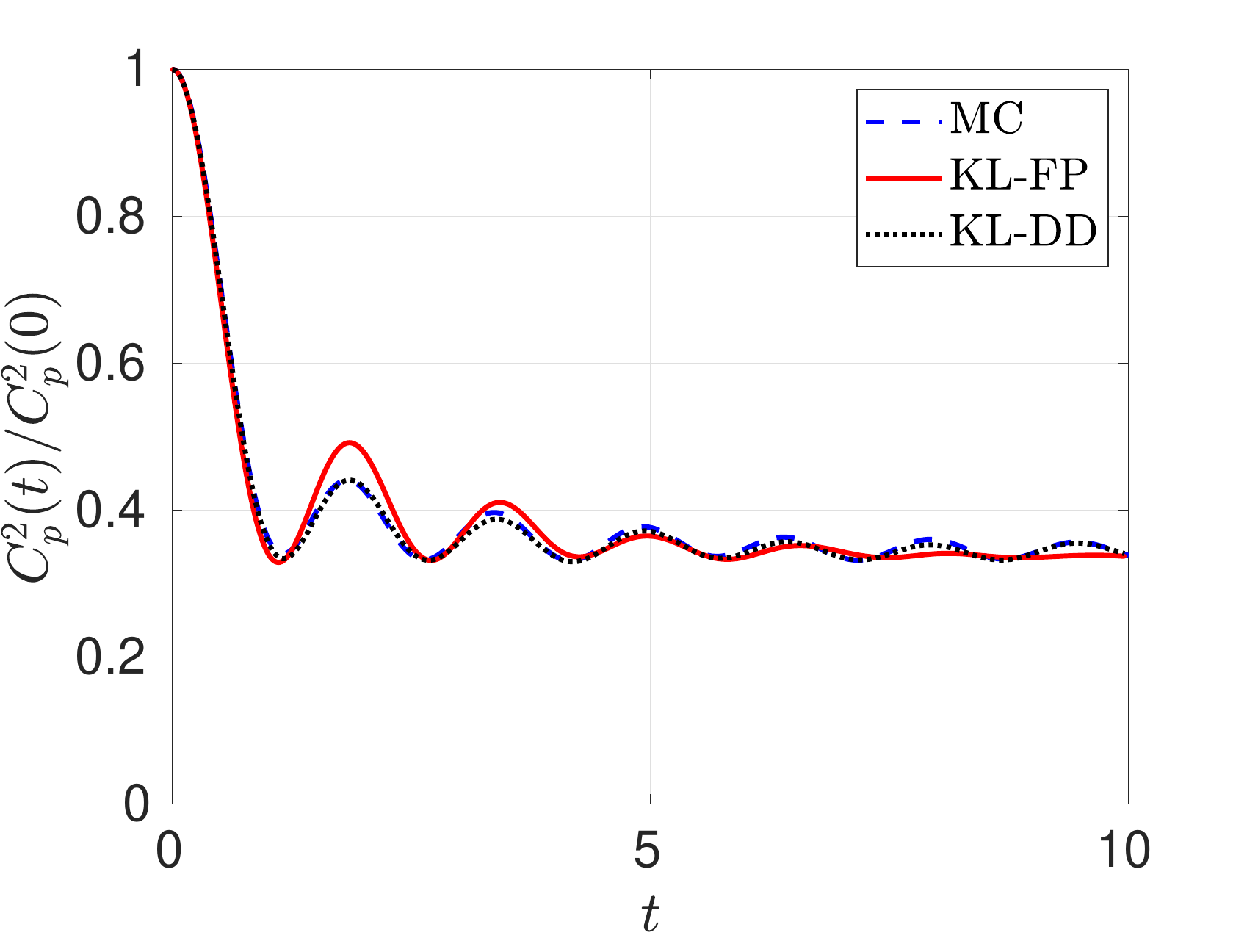}
\includegraphics[height=4.cm]{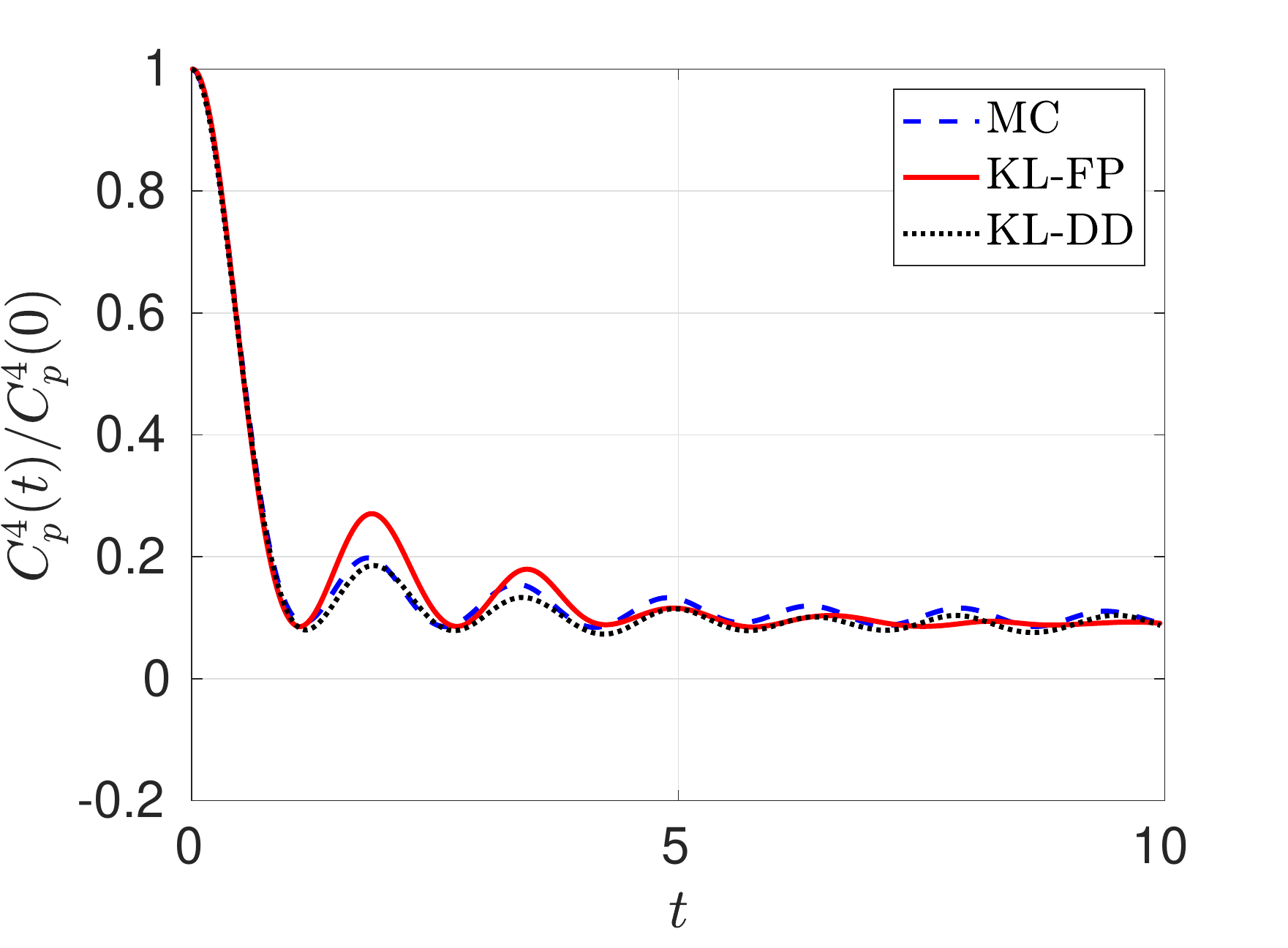}
}
\centerline{
\includegraphics[height=4.cm]{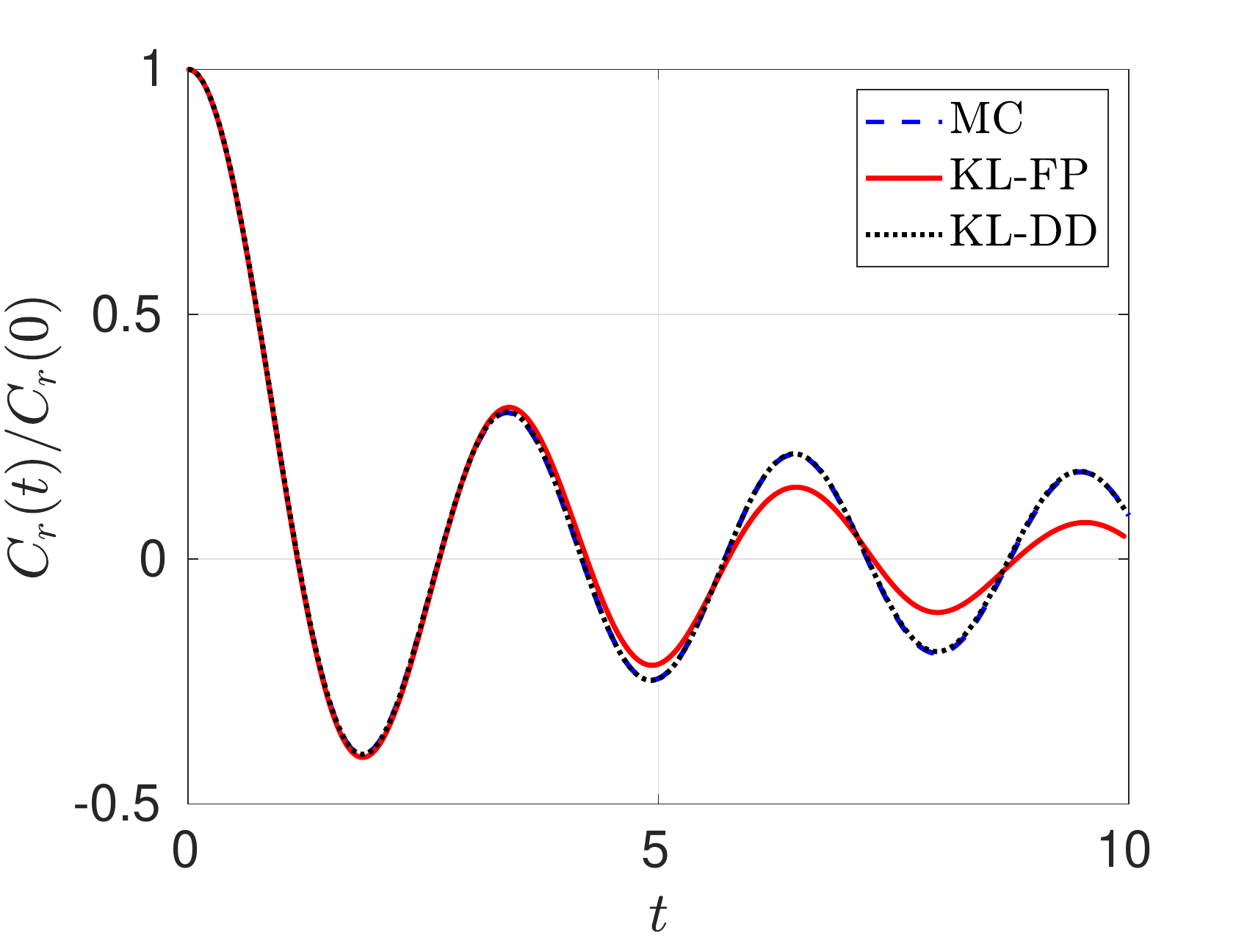} 
\includegraphics[height=4.cm]{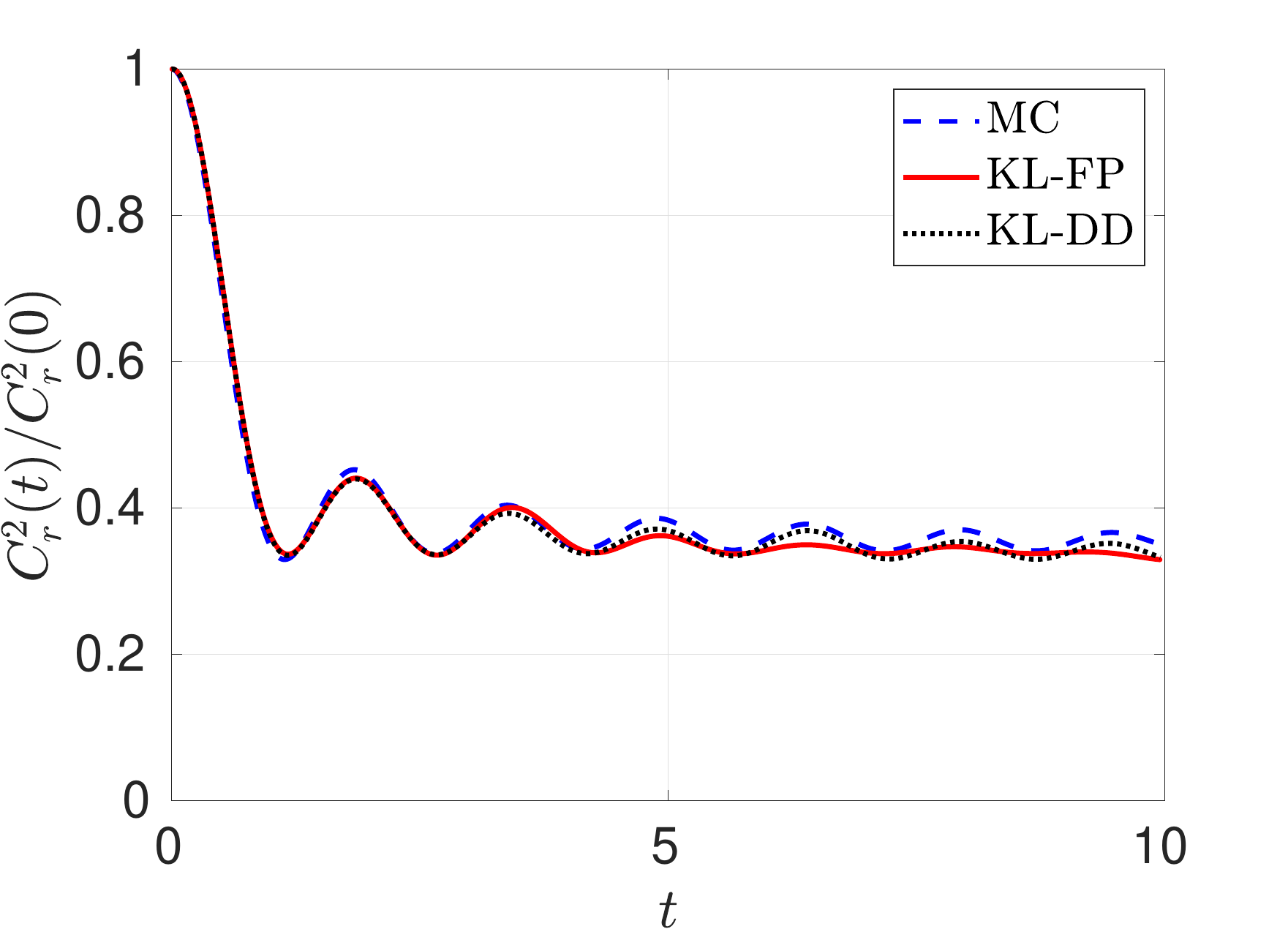}
\includegraphics[height=4.cm]{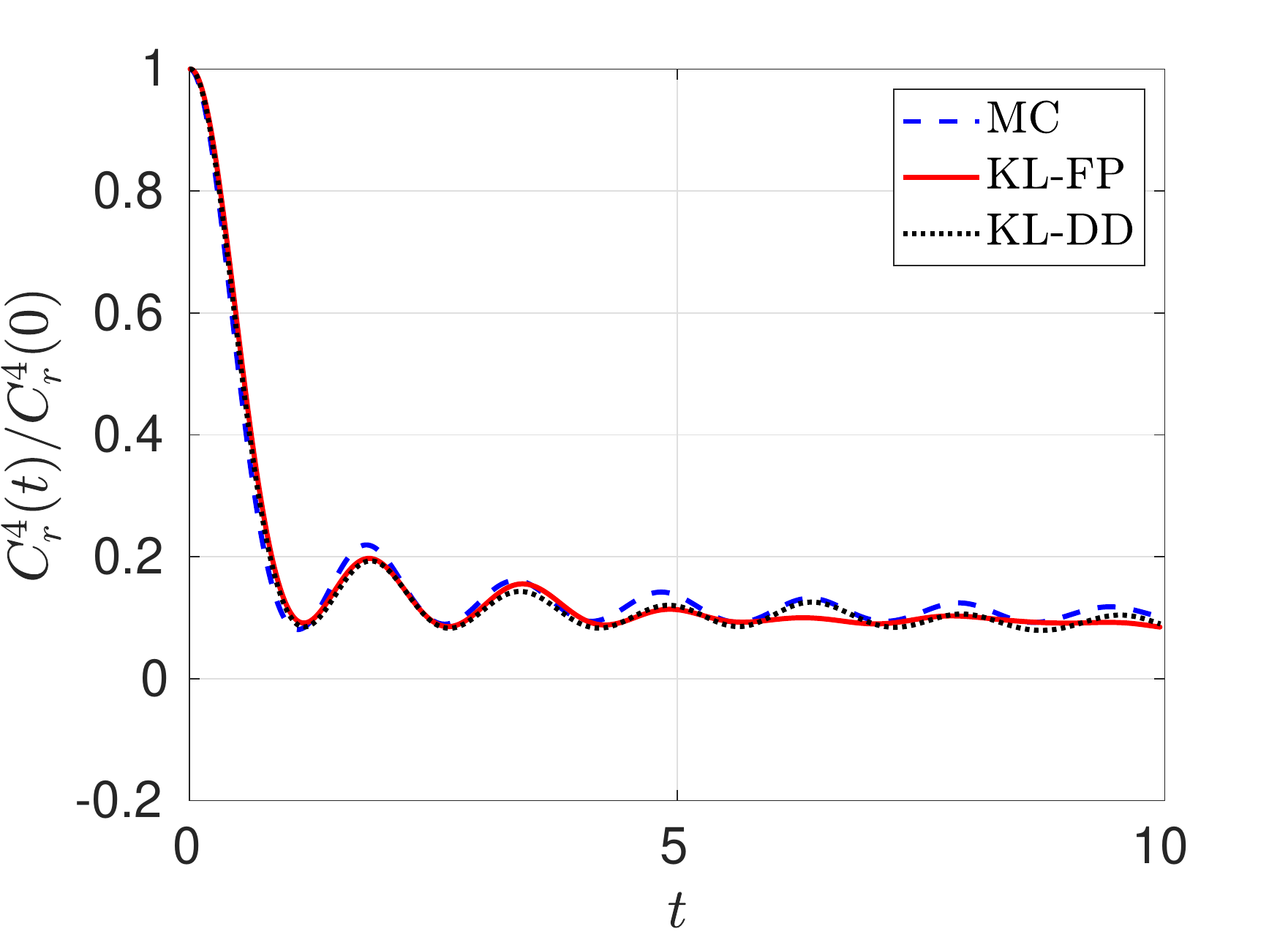}
}
\caption{Nonlinear wave equaton \eqref{eqn:nonlinear_wave1}. 
Temporal auto-correlation function of polynomial observables $p_j^{m}(t)$ (first row) $r_j^{m}(t)$ (second row) with $m=1,2,4$.
We compare results from Markov-Chain-Monte-Carlo simulation (MC), 
KL expansion based on the first-principle MZ memory kernel 
calculation \eqref{gleCC} (KL-FP), and KL expansion 
based on a data-driven estimate of the temporal auto-correlation 
function (KL-DD). 
 {The parameter $\gamma$ appearing in \eqref{Gibbs_2} 
is set to $40$, while $\alpha_1=\beta_1=1$.}}
\label{fig:C_FPU}
\end{figure}

\section{Summary}
\label{sec:conclusion}
  {
We developed a new method to approximate 
the Mori-Zwanzig (MZ) memory integral 
in generalized Langevin equations (GLEs) describing 
the evolution of smooth observables in 
high-dimensional nonlinear systems with 
local interactions. 
 %
The new method is based on Faber operator 
series expansions \cite{zhu2018faber}, 
and a formally exact combinatorial algorithm 
that allows us to compute the expansion 
coefficients of the  MZ memory from first 
principles, i.e., based on the microscopic 
equations of motion. 
We also developed a new stochastic 
modeling technique that employs Karhunen-Lo\`eve 
expansions to represent the MZ fluctuation 
term (random noise) for systems in statistical 
equilibrium. 
We demonstrated the MZ memory calculation method 
and the MZ-KL stochastic modeling 
technique in applications 
to random wave propagation and prototype 
problems in classical statistical mechanics such as 
the Fermi-Pasta-Ulam $\beta$-model. 
We found that the proposed algorithms can 
accurately capture relaxation to statistical equilibrium in 
systems with mild nonlinearities, and in strongly 
nonlinear systems at high-temperature. 
At low temperature the Faber expansion of the 
MZ memory kernel is granted to converge only 
on a time interval that depends on the 
system and on the observable. In particular, 
Corollary 3.4.3 in \cite{zhu2018estimation} establishes
short-time convergence of the MZ-Faber memory 
approximation  for a broad class of nonlinear 
systems of the form \eqref{eqn:nonautonODE}. 
This implies that the MZ-Faber cumulant expansion 
can exhibit short-time convergence, 
meaning that it produces first-principle results that 
are accurate only for relatively short integration times. 
}

We conclude by emphasizing that the 
mathematical techniques we presented can be 
readily applied to more general systems with local 
interactions such as particle systems modeling the 
microscopic dynamics of solids and liquids
\cite{Yoshimoto2013,Li2015,Li2017}. 
This opens the possibility to build new approximation 
schemes for MZ equations and derive  
new types of coarse-grained models where 
the MZ memory is constructed 
from first-principles and the fluctuation 
term is modeled stochastically.

\vspace{0.3cm}
\noindent 
{\bf Acknowledgements} 
This research was supported by the Air Force 
Office of Scientific Research (AFOSR) 
grant FA9550-16-586-1-0092.

\appendix
 
\section{Auto-correlation function of polynomial observables}
\label{app:KLconvergence}
In this Appendix we prove that the temporal 
auto-correlation function of phase space functions 
of the form $u^n(t)=u^n(\bm x(t,\bm x_0))$, i.e., 
\begin{equation}
\langle u^{n}(0),u^{n}(t)\rangle_{\rho}\qquad n\in \mathbb{N}
\end{equation}
can be represented by replacing $u(t)$ with the KL expansion 
\eqref{KL_Sample}, and then sending $K$ to infinity. 
This result allows us to compute the auto-correlation function 
of $u^n(t)$ based on the KL expansions of $u(t)$. 

\vspace{0.2cm}
\noindent
{\bf Theorem A.1} {\em
Consider a zero-mean stationary stochastic process 
$u(t)$, $t\in[0,T]$, and assume that it has finite 
joint moments up to any desired order. Let   
\begin{equation}
u_K(t) = \sum_{k=1}^K \sqrt{\lambda_k}\xi_k e_k(t), \qquad 
\end{equation}
be the truncated Karhunen-Lo\`eve expansion of $u(t)$. 
Then 
\begin{equation}
\lim_{K\rightarrow \infty} 
\left|\langle u^{n}(0), u^{n}(t)\rangle_{\rho}-
 \langle u_{K}^{n}(0), u_{K}^{n}(t)\rangle_{\rho}\right| 
 \qquad \forall 
 n\in\mathbb{N}, 
\end{equation}
i.e., $\langle u_{K}^{n}(0), u_{K}^{n}(t)\rangle_{\rho}$ converges 
uniformly to $\langle u^{n}(0), u^{n}(t)\rangle_{\rho}$ 
as $K\rightarrow \infty$.
}

\begin{proof}
Let us define 
\begin{align}
\delta_K(t)&=\left|\langle u_{K}^{n}(t),u_{K}^{n}(0)\rangle-
\langle u^{n}(t),u^{}(0)\rangle\right|\nonumber\\
&=|
\langle u_{K}^{n}(t),u_{K}^{n}(0)\rangle
-\langle u^{n}(t),u^{n}_{K}(0)\rangle
+\langle u^{n}(t),u^{n}_{K}(0)\rangle
-\langle u^{n}(t),u^{n}(0)\rangle|\nonumber\\
&=
|\langle u_{K}^{n}(t)-u^{n}(t),u_{K}^{n}(0)\rangle
+\langle u^{n}(t),u^{n}_{K}(0)-u^{n}(0)\rangle|\nonumber\\
&\leq 
|\langle u_{K}^{n}(t)-u^{n}(t),u_{K}^{n}(0)\rangle|
+|\langle u^{n}(t),u^{n}_{K}(0)-u^{n}(0)\rangle|\label{eqn:deltaK}
\end{align} 
The first term at the right hand side is of the form
\begin{align*}
a^{n}-b^{n}=(a-b)\sum_{i=0}^{n-1}a^{i}b^{n-1-i}.
\end{align*}
By using the Cauchy-Schwarz inequality, we obtain
\begin{align*}
|\langle u_{K}^{n}(t)-u^{n}(t),u_{K}^{n}(0)\rangle|
=&\left|\langle(u_{K}(t)-u(t))\sum_{i=0}^{n-1}u_K^i(t)
u^{n-1-i}(t),u_{K}^{n}(0)\rangle\right|\\
=&\left|\langle u_{K}(t)-u(t),u_{K}^{n}(0)\sum_{i=0}^{n-1}
u_K^i(t)u^{n-1-i}(t)\rangle\right|\\
\leq&\epsilon_K(t)\left\|u_{K}^{n}(0)\sum_{i=1}^{n-1}
u_K^i(t)u^{n-1-i}(t)\right\|_{L^2},
\end{align*}
where we defined $\epsilon_K(t)=\|u_K(t)-u(t)\|_{L^2}$. It is well-known 
that $\epsilon_K(t)\rightarrow 0$ as $K\rightarrow \infty$ 
(see, e.g., \cite{le2010spectral}). 
By using the 
generalized H\"{o}lder's inequality 
$\|fg\|_{L^p}\leq\|f\|_{L^{q}}\|g\|_{L^{q}}$, where $2p=q$ and the 
Minkowski inequality, we obtain
\begin{align}\label{eqn:es1}
|\langle u_{K}^{n}(t)-u^{n}(t),u_{K}^{n}(0)\rangle|
&\leq \epsilon_K(t)\|u_K^n(0)\|_{L^4}\sum_{i=1}^{n-1}
\|u_K^i(t)u^{n-i-1}(t)\|_{L^4}\nonumber\\
&\leq \epsilon_K(t)\|u_K^n(0)\|_{L^4}\sum_{i=1}^{n-1}
\|u_K^i(t)\|_{L^8}\|u^{n-i-1}(t)\|_{L^8}=C_1\epsilon_K(t), 
\end{align}
where 
\begin{equation}
C_1 = \|u_K^n(0)\|_{L^4}
\sum_{i=1}^{n-1}\|u_K^i(t)\|_{L^8}\|u^{n-i-1}(t)\|_{L^8}
<\infty.
\end{equation}
Similarly, we have 
 {
\begin{align}\label{eqn:es2}
|\langle u^{n}(t),u_{K}^{n}(0)-u^n(0)\rangle|
\leq \epsilon_K(0)\|u^n(0)\|_{L^4}
\sum_{i=1}^{n-1}\|u_K^i(0)\|_{L^8}\|u^{n-i-1}(0)\|_{L^8}
=C_2\epsilon_K(0).
\end{align}
}
By combining \eqref{eqn:deltaK}, \eqref{eqn:es1} 
and \eqref{eqn:es2}, we obtain 
\begin{align*}
\lim_{K\rightarrow+\infty} \delta_K(t)\leq 
\lim_{K\rightarrow+\infty} C_1\epsilon_K(t)+C_2\epsilon_K(0)=0,
\end{align*}
which proves the theorem. 

\end{proof}

\newpage

\end{document}